\numberwithin{equation}{section}
\newtheorem{theorem}{Theorem}
\newtheorem{proposition}{Proposition}
\newtheorem{lemma}{Lemma}
\newtheorem{corollary}{Corollary}
\theoremstyle{definition}
\newtheorem{definition}{Definition}
\newtheorem{definitions}[definition]{Definitions}
\newtheorem*{remark*}{Remark}
\newtheorem{example}[definition]{Example}
\newtheorem*{example*}{Example}
\newtheorem*{note}{Note}
\newtheorem*{notation}{Notation}
\newtheorem*{terminology}{Terminology}
\newtheorem*{ackn}{Acknowledgment}
\begin{document}
		
\title{\textbf{Joint invariant sets for non-commutative expanding Markov maps of the circle}} 
		
\author{Georgios Lamprinakis}

\date{}

\maketitle
		
\begin{abstract}
 A long-standing question is what invariant sets can be shared by two maps acting on the same space. A similar question stands for invariant measures. A particular interesting case are expanding Markov maps of the circle. If the two involved maps are commuting the answer is almost complete. However very little is known in the non-commutative case. A first step is to analyze the structure of the invariant sets of a single map.
For a mapping of the circle of class $C^{\alpha}$, $\alpha>1$, we study the topological structure of the set containing all compact invariant sets. Furthermore for a fixed such mapping we examine  locally, in the category sense, how big is the subset of all  maps that have at least one non trivial joint invariant compact set. Lastly we show the strong dimensional relation of the maximal invariant set  of a given Markov map contained in a subinterval of $[0,1)$\footnote{Here by $[0,1)$ we denote is the unit interval where $0$ and $1$ are identified. The topology on $[0, 1)$ is the one that has as a base the intervals $(x - \epsilon, x + \epsilon)\cap [0, 1)$, if $x \in (0, 1)$ and $[0, \epsilon)\cup (1 -\epsilon, 1),$ if $x = 0$, where $\epsilon > 0$. A metric that is compatible with the topology in $[0,1)$ is $d_{[0,1)}(t, s) := \min\{|t - s|, 1 - |t - s|\}$, for $t, s \in [0, 1)$ and $([0,1), d_{[0,1)})$ is a compact metric space.} and the set of all right endpoints of its invariants sets contained in the same subinterval as well as the continuity dependence of the dimension on the endpoints of the subinterval.
\end{abstract}

\section{Introduction}

One long-standing question is which invariant sets can have two maps of the same space in common. In particular the case when the space is the unit interval or the circle and the maps are expanding Markov maps is interesting. One of the main difficulties here is that those maps are not globally invertible. Therefore (and for other reasons) the methods that helped to give answers in the setting of Anosov diffeomorphisms on the two dimensional torus, used by A. Brown and F.R. Hertz in \cite{Brown Hertz}, cannot be applied. On the other hand, A. Johnson and D. Rudolph showed that any two commuting Markov maps can be linearized simultaneously \cite{Johnson Rudolph}. That reduces the question to joined invariant sets of linear Markov maps that was solved by H. Furstenberg \cite{Furstenberg upbox dim = Haus dim = entropy}. We will consider the general case at least for a majority of maps. For this we study the structure of the set of invariant sets of a given set. A global result for two generic maps needs a restriction in terms of the Hausdorff dimension of the invariant sets while a local result does not need this restriction.

Let $f:[0,1) \to [0,1)$ be an expanding Markov map of the circle, i.e. there exist finitely many points $x_i\in [0,1)$ such that $f(x_{i-1},x_i)=[0,1)$, $f\in C^\alpha(x_{i-1},x_i)$ and a $\gamma > 1$ so that $|f'(x)|> \gamma$ for all $x\in [0,1)$. For such a map we consider $\mathcal{K}_f$ to be the set of all compact subsets of $[0,1)$ that are also invariant under the action of $f$. We endow the set $\mathcal{K}_f$ with the well known Hausdorff metric $d_H$. M. Urbanski in \cite{Urbanski87} and C.C. Conley in \cite{Conley} present some the topological properties of the this metric space for $C^2$ expanding Markov maps and for flows respectively. Motivated by that, we further study the topological structure of the metric space $(\mathcal{K}_f, d_H)$. More precisely, we will show that $(\mathcal{K}_f, d_H)$ is a compact and totally disconnected metric space.

Consider the set $\mathcal{E}^{\alpha}$, $\alpha \in (1, +\infty]$, of all $C^{\alpha}$ expanding Markov maps of the circle.  $\mathcal{E}^{\alpha}$ is endowed with the $\|\cdot\|_{C^{\alpha}}$ norm (the usual norm that is considered in $C^{\alpha}$). Given a function $f$ in $\mathcal{E}^{\alpha}$
we will show that for a generic function $g$ in $\mathcal{E}^{\alpha}$ and for all $K\in \mathcal{K}_f$, with sufficiently small Hausdorff dimension, $K\not\in \mathcal{K}_g$. As a matter of  fact we will show something even stronger, namely, for generic $g$ and for all $K$ with sufficiently small dimension, the intersection between $g(K)$ and $K$ is empty. A similar result without restriction on the dimension can be shown locally. More specifically, there is a open neighborhood where $f$ is contained in its closure so that for any $g$ in that neighborhood, there is no joint invariant set. 
It is worth noting that the case of $C^1$ functions was studied and fully resolved by C.G. Moreira in \cite{Moreira}.  

In the last part of this paper we give some dimensional results concerning the relation between the Hausdorff dimension of the largest $f$-invariant set contained in $[0,c] \subset [0,1)$ and of the "right endpoints" contained in the same subinterval, in an attempt to weaken the restriction, we further study the invariant sets by their endpoints. More precisely, if $f$ is in $\mathcal{E}^{\alpha}$ then for $c\in [0,1)$, we consider the sets $M_c'$ to be the set of all points for which their orbit remains below $c$ and $M_c$ to be the set of all points $x$ that their orbit stays in $[0,c)$ and $f^n(x)< x $ for all $n>1$. In \cite{Johan BAN} J. Nilsson shows that that for $f= D$, where $D$ is the doubling map acting on the circle then the sets $M_c$ and $M_c'$ have in fact the same Hausdorff dimension. We will prove that this is true, not only for the doubling map, but for every expanding Markov map in $\mathcal{E}^{\alpha}$, $\alpha \in (1, +\infty]$.

In a more general setting, we can show a similar result for the case we have both ways restrictions. Namely, let $M_{c,d}'$ denote the set of all points so that their orbit stays in an interval $[c,d]\subset [0,1)$. Respectively, $M_{c,d}$ is the set of all points $x$ so that not only their orbit remains in $[c,d]$ but also $f^n(x)<x$ for every $n>1$. This corresponds to right endpoints of their $\omega$-limit sets. Again we can prove that their respective Hausdorff dimensions in fact coincide, i.e. $\dim_H (M_{c,d}) = \dim_H(M_{c,d}')$.

Finally, we study the behaviour of the dimension of $M_{c,d}'$ as $c$, $d$ change. In \cite{Urbanski87} M. Urbanski also  examines the behaviour of the map $(c,d) \mapsto \dim_H(M_{c,d})$ but for the case of $C^2$ expanding Markov maps. In \cite{Johan BAN} J. Nilsson examines the behaviour of the map $c \mapsto \dim_H(M_c)$ for the doubling map using more elementary combinatorial methods. Our aim is to extend those results for any $f \in \mathcal{E}^{\alpha}$. In fact, we will show that the Hausdorff dimension of $M_{c,d}$ depends continuously on $(c,d)$, i.e. the map $(c,d) \mapsto \dim_H(M_{c,d})$ is continuous.

\smallskip

\section{Preliminaries} 

\subsection{Markov partition and Coding}
Let $\Sigma_m$ denote the full shift space corresponding to alphabet $\{ 0,1, \ldots , m-1 \}$, i.e. $\Sigma_m := \{ 0,1, \ldots , m-1 \}^{\mathbb{N}}$. The topology on $\Sigma_m$ is the product topology and it is a compact, metrizable topological space. Let $\sigma$ be the regular shift operator on $\Sigma_m$, such that $\big(\sigma(\underline{x})\big)_i = x_{i+1}$, for all $\underline{x} = (x_1, x_2, \ldots )\in \Sigma_m$. The shift operator act continuously on $\Sigma_m$. We also endow $\Sigma_m$ with the lexicographic order, i.e. if $\underline{a}$, $\underline{b} \in \Sigma_{m}$, then $\underline{a} < \underline{b}$ iff  there exists $i_0\in \mathbb{N}$ such that $a_i=b_i$ for all $1\leq i<i_0$ and $a_{i_0}< b_{i_0}$. For $a\in \{ 0,1, \ldots , m-1 \}$, we consider the cylinder set $C_a:=\{\underline{x}\in \Sigma_{m}: \ x_1=a\}$. The cylinder sets are both closed and open subsets of $\Sigma_m$. 

Let $\Sigma$ be a closed subset of $\Sigma_m$. Then $\Sigma$ is a subshift of finite type (SFT) if it is a closed under the shift operator subset of $\Sigma_m$ so that the forbidden blocks that describe $\Sigma$ consist a finite set. Of course the whole shift space is a SFT. A forbidden block $w=[w_1 \ldots  w_{\ell}]$ can also be described as the collection of larger blocks
	$$\{ [w_1 \ldots  w_{\ell}0], [w_1 \ldots  w_{\ell}1], \ldots , [w_1 \ldots  w_{\ell}(d-1)] \} \ .$$
Thus we can assume if needed, that all the forbidden words are of the same length, equal to that of the longest forbidden block.	
Any subshift of finite type can also be represented from a $m^{\ell-1}\times m^{\ell-1}$ matrix, $A=(a_{ij})$, with entries in $\{0,1\}$, where $\ell$ is the length of the longest forbidden word and $a_{ij} = 1$ when it corresponds to an allowed block and $a_{ij}=0$ otherwise.

Let $f:[0,1) \to [0,1)$ be an expanding Markov map of the circle, i.e. $f$ is a local homeomorphism and there exist finitely many points $x_i\in [0,1)$ such that $f(x_{i-1},x_i)=[0,1)$, $f\in C^\alpha(x_{i-1},x_i)$ and a $\gamma > 1$ so that $|f'(x)|> \gamma$ for all $x\in [0,1)$. We call the intervals $I_i=[x_i, x_{i+1}]$, fundamental intervals.

A Markov partition for an expanding Markov map $f$ is a finite cover $\mathcal{P} = \{P_1 , \ldots , P_d \}$ of $[0,1)$ such that,
	\begin{enumerate}
		\item each $P_i$ is the closure of its interior, int$P_i$
		\item int$P_i \cap \text{int} P_j = \emptyset$
		\item each $f(P_i)$ is a union of elements in $\mathcal{P}$.
	\end{enumerate} 

An expanding Markov map has Markov partition of arbitrary small diameter. If $\mathcal{P} = \{P_1 , \ldots , P_d \}$ is a Markov partition then $([0,1), f)$ can be represented, in a natural way, by a subshift of finite type, $\Sigma_A$ in $\Sigma_{d-1}$ so corresponding to the transfer matrix $A=(a_{ij})$, where 
$$a_{ij} = \begin{cases}
			1, \quad &\text{int}P_i \cap f^{-1}( \text{int}P_j) \neq \emptyset\\
			0, &\text{otherwise}
			\end{cases}$$ 
This gives a coding map $\chi: \Sigma_A \to [0,1) $, so that $\chi_f \circ \sigma = f\circ \chi_f$. Furthermore $\chi_f$ is H\"older continuous and injective on the set of points whose trajectory never hit the boundary of any $P_i$. These points are at most countable (see \cite{Pesin}).

Now let $f, g$ be two expanding Markov maps of class $C^{\alpha}$, $\alpha >1$. If $f, g$ have the same number of fundamental intervals, $\{ I_1, \ldots , I_d \}$, $\{ J_1, \ldots , J_d \}$, then they are topologically conjugated via a H\"older continuous homeomorphism, $h$, induced by the respective coding maps of the same coding space. Indeed, we consider the respective coding maps $\chi_f: \Sigma_d \to [0,1)$ and $\chi_g: \Sigma_d \to [0,1)$. Define for $x\in [0,1)$, $h(x):= \chi_g \big( \chi_f^{-1}(x) \big)$. Even if $x$ is a boundary point for some $I_i$ and $I_{i+1}$ and thus it can be represented by two sequences, one that ends with infinite $(i-1)$'s and one that ends with infinite $i$'s, then $\chi_g \big( \chi_f^{-1}(x) \big)$ is again a single point.  Then $h$ has all the requested properties.
For more details and proofs one can see for example \cite{Denker Grillenberger Sigmund, Lind Marcus, Pesin}.

\smallskip

\subsection{Entropy and Dimension}
Let $D$ denote the doubling map of the circle, $\Sigma_2 = \{ 0,1\}^{\mathbb{N}}$, endowed with the metric
$$d_2(\underline{a}, \underline{b}) := \sum_{i=1}^{\infty} \frac{|a_i - b_i|}{2^i} \ $$
and $\sigma$ be the shift map in $\Sigma_2$.
Then, as mentioned above, we can associate the system $([0,1),D)$ with with the space $(\Sigma_2, \sigma)$ with an almost one to one corresponding. The correspondence here is rather natural as we relate the sequence $(x_1, x_2, \ldots)$,  $x_i\in \{0,1\}$ with the real number, in $[0,1]$, $x = \sum_{i=1}^{\infty} \frac{x_i}{2^i}$.

\begin{theorem} \label{Th. upper box dim = Hausdorff dim = entropy}
	Let $A$ denote a compact invariant set of $(\Sigma_2, \sigma)$ and $A^*$ the corresponding invariant set on $([0,1], D)$. Then,
	$$\dim_H (A^*) = \dim_{box}(A^*) = \frac{\textup{h}_{\textup{top}}(A^*)}{\log 2}$$
\end{theorem}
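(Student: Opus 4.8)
The plan is to establish the two equalities by a symbolic-to-geometric transfer argument, exploiting that the doubling map is piecewise linear with constant slope $2$. The backbone is the classical fact that for a subshift $A \subseteq \Sigma_2$ the topological entropy equals the exponential growth rate of the number of admissible words of length $n$, i.e. $\mathrm{h}_{\mathrm{top}}(A) = \lim_{n\to\infty}\frac{1}{n}\log N_n(A)$, where $N_n(A)$ is the number of length-$n$ cylinders meeting $A$ (this limit exists by subadditivity of $\log N_n$). I would first record this, then transport the count to $[0,1]$.

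First I would observe that under the coding correspondence $(\Sigma_2,\sigma)\to([0,1],D)$, a length-$n$ cylinder $[w_1\dots w_n]$ maps onto a dyadic interval of length exactly $2^{-n}$, and that two distinct length-$n$ cylinders have images with disjoint interiors. Hence the image $A^*$ of $A$ is covered by $N_n(A)$ intervals of diameter $2^{-n}$, which immediately gives
$$\dim_{box}(A^*) \le \limsup_{n\to\infty}\frac{\log N_n(A)}{n\log 2} = \frac{\mathrm{h}_{\mathrm{top}}(A^*)}{\log 2}.$$
For the matching lower bound on $\dim_H$, I would build a Frostman measure: take a $\sigma$-invariant measure $\mu$ on $A$ of maximal entropy (which exists by upper semicontinuity of entropy on the compact metrizable space of invariant measures, the variational principle giving $h_\mu(\sigma)=\mathrm{h}_{\mathrm{top}}(A)$), push it forward to $\mu^*$ on $A^*$, and apply the Shannon–McMillan–Breiman theorem: for $\mu$-a.e.\ point the measure of the length-$n$ cylinder through it behaves like $e^{-n h_\mu}$, which translates to $\mu^*(B(x,2^{-n})) \approx (2^{-n})^{h_\mu/\log 2}$ up to subexponential factors. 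The mass distribution principle (Billingsley's lemma) then yields $\dim_H(A^*)\ge h_\mu/\log 2 = \mathrm{h}_{\mathrm{top}}(A^*)/\log 2$. Chaining with $\dim_H\le\dim_{box}$ (always true) closes the loop and forces all three quantities to coincide.

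The main obstacle is the handling of the boundary (dyadic) points where the coding is two-to-one rather than one-to-one: one must check that this countable exceptional set affects neither the entropy (it has measure zero for any non-atomic invariant measure, and an invariant set of maximal entropy is certainly non-atomic unless it is finite, a trivial case) nor the dimensions (a countable set has Hausdorff and box dimension zero, so removing or doubling it is harmless for $\dim_H$, and for $\dim_{box}$ the covering estimate above was done upstairs in $\Sigma_2$ and only used the length of the image intervals). A secondary technical point is making the $\approx$ in the SMB step precise — one should fix $\varepsilon>0$, restrict to a positive-measure set on which the cylinder-measure estimates hold uniformly for all large $n$, obtain $\dim_H(A^*)\ge h_\mu/\log 2 - \varepsilon$, and let $\varepsilon\to 0$. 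I expect both issues to be routine but worth stating carefully, since the rest of the paper leans on this theorem as the base case for transferring dimension results to general maps in $\mathcal{E}^\alpha$ via the Hölder conjugacy $h$.
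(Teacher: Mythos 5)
Your argument is correct; note, however, that the paper itself does not prove Theorem~\ref{Th. upper box dim = Hausdorff dim = entropy} but simply cites Furstenberg's Proposition~III.1, so you are supplying a proof where the paper supplies a reference. Your route is the now-standard one: the subadditivity of $n\mapsto \log N_n(A)$ gives $\mathrm{h}_{\mathrm{top}}(A)=\lim_n \tfrac1n\log N_n(A)$; pushing $n$-cylinders to dyadic intervals of length $2^{-n}$ gives the cover bound $\overline{\dim}_{\mathrm{box}}(A^*)\le \mathrm{h}_{\mathrm{top}}(A)/\log 2$; and a measure of maximal entropy together with Shannon--McMillan--Breiman and Billingsley's lemma gives $\dim_H(A^*)\ge \mathrm{h}_{\mathrm{top}}(A)/\log 2$, after which $\dim_H\le\underline{\dim}_{\mathrm{box}}\le\overline{\dim}_{\mathrm{box}}$ closes the chain. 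Furstenberg's original argument is more combinatorial (it constructs a ``tree'' measure by hand rather than invoking the existence of a measure of maximal entropy and the variational principle), so yours is shorter modulo those black boxes but not intrinsically different in spirit. Two small points worth flagging if you write this out in full: (i) the mass distribution step must be taken in the net/grid form of Billingsley's lemma (estimating $\mu^*$ of the dyadic interval $I_n(x)$ containing $x$, not of arbitrary balls), precisely because SMB controls cylinders through $x$ and says nothing directly about neighbouring cylinders that a Euclidean ball may intersect --- you invoke Billingsley by name, which is the right patch, but the ``$\mu^*(B(x,2^{-n}))\approx (2^{-n})^{h_\mu/\log 2}$'' line as written overstates what SMB gives; (ii) the zero-entropy case needs no Frostman measure at all, since the cover bound alone already forces $\overline{\dim}_{\mathrm{box}}(A^*)=0$, which takes care of your parenthetical remark about atomic measures.
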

\begin{proof}
	See \cite[Proposition III.1]{Furstenberg upbox dim = Haus dim = entropy}
\end{proof}

	Note that every invariant set in $([0,1], D)$ can be represented as an invariant set of $(\Sigma_2, \sigma)$,  through the onto map $h:\Sigma_2 \to [0,1]$ (coding map). Therefore Theorem \ref{Th. upper box dim = Hausdorff dim = entropy} tells us that when it comes down to $D$-invariant sets on $[0,1]$ the notions upper box-counting dimension, lower box-counting dimension, Hausdorff dimension and topological entropy coincide.

	From now on we will use the above result without any special mention. In other words, we will use the triple equality $\dim_H (A^*) = \dim_{box}(A^*) \thickapprox \textup{h}_\textup{top}(A^*)$ regularly without saying.

\begin{proposition} \label{Pr. dim + dim < 1}  
	Let $K_1$, $K_2 \subset \mathbb{R}$ with upper box-counting dimension $d_1$, $d_2$ respectively. Assume that $d_1+d_2<1$, then the Lebesgue measure of $K_1-K_2$ is zero.
\end{proposition}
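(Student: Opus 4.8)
The plan is to estimate the size of the difference set $K_1 - K_2$ by a covering argument. Fix $\epsilon > 0$ small enough that $d_1 + d_2 + 2\epsilon < 1$. By definition of upper box-counting dimension, for all sufficiently small scales $\delta > 0$ the set $K_1$ can be covered by at most $\delta^{-(d_1+\epsilon)}$ intervals of length $\delta$, and likewise $K_2$ by at most $\delta^{-(d_2+\epsilon)}$ intervals of length $\delta$. If $I$ is an interval of length $\delta$ meeting $K_1$ and $J$ an interval of length $\delta$ meeting $K_2$, then $I - J$ is an interval of length $2\delta$, and $K_1 - K_2 \subseteq \bigcup_{I,J} (I - J)$. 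Hence $K_1 - K_2$ is covered by at most $\delta^{-(d_1+\epsilon)} \cdot \delta^{-(d_2+\epsilon)} = \delta^{-(d_1+d_2+2\epsilon)}$ intervals, each of length $2\delta$, so its Lebesgue outer measure is bounded by $2\delta \cdot \delta^{-(d_1+d_2+2\epsilon)} = 2\,\delta^{\,1 - (d_1+d_2+2\epsilon)}$.

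Since the exponent $1 - (d_1 + d_2 + 2\epsilon)$ is strictly positive, letting $\delta \to 0$ forces the Lebesgue measure of $K_1 - K_2$ to be zero. One should note that this argument tacitly requires $K_1$ and $K_2$ to be bounded so that the box-counting dimension is meaningful and the covering numbers are finite; in the applications $K_1, K_2$ will be compact subsets of $[0,1)$ (or of $\mathbb{R}$), so this is not a restriction, but if one wants the statement for arbitrary subsets of $\mathbb{R}$ one decomposes each $K_i$ into countably many bounded pieces $K_i \cap [n, n+1]$, applies the bounded case to each pair, and uses countable subadditivity of Lebesgue measure together with the fact that $\dim_{box}(K_i \cap [n,n+1]) \le d_i$.

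There is essentially no deep obstacle here; the only point requiring a little care is the bookkeeping with the definition of upper box dimension — namely, that "covering number grows like $\delta^{-(d+\epsilon)}$ for small $\delta$" is exactly the content of $\overline{\dim}_{box} = d$, and that this holds simultaneously for a sequence $\delta \to 0$ (indeed for all small $\delta$). I would state this covering estimate as the first line of the proof, then carry out the product count and the measure bound in two or three lines, and conclude by taking $\delta \to 0$ and then $\epsilon \to 0$ (the latter is not even needed since a single admissible $\epsilon$ suffices). The result is the standard fact that $\dim_{box}(K_1) + \dim_{box}(K_2) < 1$ prevents $K_1 - K_2$ from having interior or positive measure, and the proof is just the pigeonhole/covering computation above.
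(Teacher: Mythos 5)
Your argument is correct: it is the standard covering estimate — bound $N_\delta(K_i)$ by $\delta^{-(d_i+\epsilon)}$, cover $K_1-K_2$ by the $N_\delta(K_1)\cdot N_\delta(K_2)$ difference intervals of length $2\delta$, and let $\delta\to 0$ using $1-(d_1+d_2+2\epsilon)>0$. This is essentially the same computation that underlies the cited Palis–Takens proposition; the paper simply defers to that reference rather than writing the covering argument out.
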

\begin{proof}
	See \cite[Proposition 1, Chapter 4]{Palis Takens}, Note that the notion of Cantor set is not essentially used for the proof of this statement.
\end{proof}

Proposition \ref{Pr. dim + dim < 1} shows the relation between dimension and the possibility for any perturbation of one set to intersect with the other set. Namely, observe that $K_1-K_2$ is the set of all $t\in \mathbb{R}$ so that $K_1 \cap (K_2+t) \neq \emptyset$. The result above shows that almost surely this intersection is in fact empty, given that the two sets have sufficiently small dimension.

A rather straightforward result is the following Lemma. One shall only use the definition of the Hausdorff dimension and the Lipschitz continuity to prove it. A detailed proof as well as some more generally stated results can be found in the first Chapters of \cite{Pesin}.

\begin{lemma} \label{Lemma dim. invariant under Lip.}  
	Let $X$, $Y$ metric spaces, $A\subset X$ and $f: X \to Y$ Lipschitz continuous. Then $\dim_H(f(A)) \leq \dim_H(A)$
\end{lemma}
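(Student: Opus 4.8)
The plan is to argue directly from the definition of the Hausdorff measures $\mathcal{H}^s$, by pushing economical covers of $A$ forward through $f$. Let $L\ge 0$ be a Lipschitz constant for $f$, so that $d_Y(f(x),f(x'))\le L\,d_X(x,x')$ for all $x,x'\in X$; we may assume $L>0$, since if $L=0$ then $f$ is constant, $f(A)$ is a single point, and the inequality is trivial. Recall that for $s\ge 0$ and $\delta>0$ one sets $\mathcal{H}^s_\delta(A)=\inf\big\{\sum_i(\operatorname{diam}U_i)^s : A\subseteq\bigcup_i U_i,\ \operatorname{diam}U_i\le\delta\big\}$, the infimum being over all countable covers, $\mathcal{H}^s(A)=\lim_{\delta\to 0^+}\mathcal{H}^s_\delta(A)=\sup_{\delta>0}\mathcal{H}^s_\delta(A)$, and $\dim_H(A)=\inf\{s\ge 0:\mathcal{H}^s(A)=0\}$. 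Since $f$ is defined on all of $X$, the covering sets $U_i$ may be taken to be arbitrary subsets of $X$, so $f(U_i)$ always makes sense.

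First I would record the elementary observation that if $\{U_i\}_i$ is a $\delta$-cover of $A$, then $\{f(U_i)\}_i$ covers $f(A)$, and $\operatorname{diam}f(U_i)\le L\,\operatorname{diam}U_i\le L\delta$, so $\{f(U_i)\}_i$ is an $(L\delta)$-cover of $f(A)$; moreover $\sum_i(\operatorname{diam}f(U_i))^s\le L^s\sum_i(\operatorname{diam}U_i)^s$. Taking the infimum over all $\delta$-covers of $A$ then gives $\mathcal{H}^s_{L\delta}(f(A))\le L^s\,\mathcal{H}^s_\delta(A)$ for every $s\ge 0$ and every $\delta>0$ (the inequality is vacuous when the right-hand side is infinite). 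Letting $\delta\to 0^+$, and noting $L\delta\to 0^+$ as well so that the left-hand side tends to $\mathcal{H}^s(f(A))$, yields $\mathcal{H}^s(f(A))\le L^s\,\mathcal{H}^s(A)$.

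To finish, fix any $s>\dim_H(A)$. By definition $\mathcal{H}^s(A)=0$, hence $\mathcal{H}^s(f(A))\le L^s\cdot 0=0$, so $\dim_H(f(A))\le s$. Since $s>\dim_H(A)$ was arbitrary, $\dim_H(f(A))\le\dim_H(A)$. There is essentially no obstacle in this argument, in line with the remark preceding the statement; the only points deserving a word of care are the degenerate case $L=0$ handled above, the convention for $0^0$ when $s=0$ (harmless, as the conclusion $\dim_H\ge 0$ is automatic there), and checking that the pushed-forward family remains countable and of diameter at most $L\delta$, both of which are immediate.
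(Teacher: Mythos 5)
Your argument is correct and is exactly the direct-from-definitions approach the paper alludes to (the paper only cites Pesin and remarks that one need only use the definition of Hausdorff dimension together with Lipschitz continuity). Pushing a $\delta$-cover of $A$ forward to an $(L\delta)$-cover of $f(A)$ to get $\mathcal{H}^s(f(A))\le L^s\,\mathcal{H}^s(A)$ and then comparing critical exponents is the standard proof, and you have handled the minor edge cases ($L=0$, $s=0$) appropriately.
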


\begin{lemma}  \label{Lemma closed map lemma}  
Let $X$ be a compact topological space and $Y$ be a Hausdorff topological space. If \mbox{$f:X\to Y$} is a continuous function, then $f$ is closed and proper. 
\end{lemma}

Another important fact that will be used extensively throughout this paper is the semicontinuity of the entropy.

\begin{lemma}(Semicontinuity of entropy) \label{Lemma semicontinuity of entropy}
Let $X_n$ be subshifts (not necessarily of finite type) and assume that \hbox{$X_{n+1} \subset X_{n}$}. If $X=\bigcap_{n\in \mathbb{N}}X_n$, then 
		$$h_{top}(X) = \lim_{n\to \infty}h_{top}(X_n)$$
\end{lemma}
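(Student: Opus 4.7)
The plan is to identify the topological entropy of a subshift $Y\subset \Sigma_m$ with its combinatorial counterpart
$$h_{top}(Y)=\lim_{k\to\infty}\tfrac{1}{k}\log|\mathcal{L}_k(Y)|=\inf_{k\geq 1}\tfrac{1}{k}\log|\mathcal{L}_k(Y)|,$$
where $\mathcal{L}_k(Y)$ is the (finite) collection of length-$k$ words appearing in some sequence of $Y$; the second equality comes from Fekete's lemma applied to the subadditive sequence $k\mapsto \log|\mathcal{L}_k(Y)|$ (subadditivity follows from factorizing each $(k+\ell)$-block as a $k$-block followed by an $\ell$-block). Within this framework the problem reduces to showing that, for each fixed $k$, the languages $\mathcal{L}_k(X_n)$ eventually stabilize to $\mathcal{L}_k(X)$.

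First I would dispatch the easy inequality. Since $X\subset X_{n+1}\subset X_n$, we obtain the inclusions $\mathcal{L}_k(X)\subset \mathcal{L}_k(X_{n+1})\subset \mathcal{L}_k(X_n)$ for every $k$, which yield the monotone chain $h_{top}(X)\leq h_{top}(X_{n+1})\leq h_{top}(X_n)$. In particular the limit $L:=\lim_{n\to\infty}h_{top}(X_n)$ exists and satisfies $L\geq h_{top}(X)$.

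The substantive step is the reverse bound. Fix $k$ and form the clopen neighborhood of $X$
$$U_k:=\bigcup_{w\in \mathcal{L}_k(X)}[w]\subset \Sigma_m,$$
with $[w]$ the cylinder of sequences starting with $w$; this set is clopen because $\mathcal{L}_k(X)$ is finite and each cylinder is clopen. The closed sets $X_n\cap(\Sigma_m\setminus U_k)$ form a decreasing sequence with empty intersection (their intersection lies in $X\setminus U_k=\emptyset$), so by compactness of $\Sigma_m$ and the finite intersection property there exists $N=N(k)$ with $X_n\subset U_k$ for all $n\geq N$. Concretely, every length-$k$ word occurring in $X_n$ already appears in some sequence of $X$, so $\mathcal{L}_k(X_n)\subset \mathcal{L}_k(X)$; combined with the trivial reverse inclusion, this gives $\mathcal{L}_k(X_n)=\mathcal{L}_k(X)$ for all $n\geq N(k)$.

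Finishing is now immediate: for $n\geq N(k)$ the infimum characterization gives
$$h_{top}(X_n)\leq \tfrac{1}{k}\log|\mathcal{L}_k(X_n)|=\tfrac{1}{k}\log|\mathcal{L}_k(X)|,$$
hence $L\leq \tfrac{1}{k}\log|\mathcal{L}_k(X)|$; since $k$ was arbitrary, taking the infimum over $k$ yields $L\leq h_{top}(X)$, and together with the previous direction this closes the argument. The only genuinely non-trivial ingredient is the stabilization $\mathcal{L}_k(X_n)=\mathcal{L}_k(X)$ for large $n$, which rests on the compactness of $\Sigma_m$ together with the clopenness of cylinders; once one has finite-determination of the language in the limit, the entropy statement is pure bookkeeping.
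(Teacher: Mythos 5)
Your argument is correct, and it takes a genuinely different route from the paper. The paper proves the hard inequality $h_{top}(X)\ge\lim_n h_{top}(X_n)$ via the variational principle: it takes a measure of maximal entropy $\mu_n$ on each $X_n$ (these exist because subshifts are expansive), passes to a weak-$\ast$ accumulation point $\mu$, observes that $\mu$ is supported on $X$ by the nesting, and invokes upper semicontinuity of $\nu\mapsto h_\nu$ on subshifts. You instead work entirely combinatorially: you use $h_{top}(Y)=\inf_k\frac{1}{k}\log|\mathcal{L}_k(Y)|$ (Fekete) and show by a compactness/finite-intersection argument on the clopen set $U_k$ that the languages stabilize, $\mathcal{L}_k(X_n)=\mathcal{L}_k(X)$ for $n\ge N(k)$, after which the inequality is immediate. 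Your stabilization step does silently use shift-invariance of $X_n$ to upgrade ``every point of $X_n$ begins with a word of $\mathcal{L}_k(X)$'' to ``every word occurring in $X_n$ lies in $\mathcal{L}_k(X)$,'' but that is automatic for subshifts and worth a half-sentence at most. The trade-off: your proof is more elementary and self-contained, needing only the language characterization of entropy (which the paper itself uses later, in Lemma \ref{Lemma most important part for continuity}) and compactness of $\Sigma_m$, and it avoids the measure-theoretic machinery entirely; the paper's argument is shorter on the page but leans on existence of measures of maximal entropy and upper semicontinuity of metric entropy, and in exchange it is the version that generalizes to nested intersections of general expansive systems rather than just subshifts. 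Either proof is acceptable here since the lemma is only ever applied to subshifts.
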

\begin{proof}
		The limit indeed exists and $\lim_{n\to \infty}h_{top}(X_n)=\inf_{n\in \mathbb{N}}h_{top}(X_n)$, since \mbox{$X_{n+1} \subset X_{n}$} implies that the sequence $\{h_{top}(X_n)\}_{n\in \mathbb{N}}$ is a decreasing sequence in $\mathbb{R}$. We obviously have that $h_{top}(X)\leq h_{top}(X_n)$ for every $n\in \mathbb{N}$. Thus, $$h_{top}(X) \leq \lim_{n\to \infty}h_{top}(X_n).$$
		Now for every $X_n$ there exists a measure of maximal entropy, $\mu_n$, such that $h_{\mu_n}(X_n)=h_{top}(X_n)$ (see for example \cite[Chapter 17, Corollary 2, pp 130]{Denker Grillenberger Sigmund}). Any weak-$\ast$ accumulation point $\mu$ of the sequence $\mu_n$ is supported by $X$. Therefore, 
		$$h_{\mu}(X)=h_{\mu}(X_n) \ ,$$ 
		for every $n\in \mathbb{N}$. By the  upper semicountuity of (measure theoretic) entropy in subshifts, i.e. of the map $\mu \mapsto h_{\mu}(\Sigma)$, where $\Sigma$ is a subshift, we have that for every $n_0\in \mathbb{N}$,
		$$h_{\mu}(X)=h_{\mu}(X_{n_0})\ge \limsup_{n\to \infty} h_{\mu_n}(X_{n_0}).$$
		By the nested property and since each $\mu_n$ is supported by $X_n$, for $n\geq n_0$, $h_{\mu_{n}}(X_{n_0}) = h_{\mu_n}(X_n)$. Thus,
		$$h_{\mu}(X)\ge \limsup_{n\to \infty} h_{\mu_n}(X_{n}).$$
		Finally, since $$h_{top}(X)=\sup\{h_{\mu}(X) \ \big| \ \mu \textup{ is a } \sigma \textup{-invariant  Borel probability measure on }X\} \ ,$$ we have that,
		$$h_{top}(X)\ge h_{\mu}(X)\ge \limsup h_{\mu_n}(X_n) = \lim_n h_{top}(X_n).$$
\end{proof}

\begin{lemma} \label{Lemma  density of SFT from above} 
If $K\in \mathcal{K}_f$, $f\in \mathcal{E}^{\alpha}$  and $K\neq SFT$ then, $K=\bigcap_{n\in \mathbb{N}}SFT_n$, where \hbox{$SFT_{n+1}\subset SFT_n$}, for all $n\in \mathbb{N}$.
\end{lemma}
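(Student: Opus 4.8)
The plan is to pass to the symbolic model, build a decreasing SFT approximation there, and push it back by the coding map. Fix a Markov partition $\mathcal{P}=\{P_1,\dots,P_d\}$ for $f$ with associated coding map $\chi_f\colon \Sigma_A\to[0,1)$, so that $\chi_f\circ\sigma=f\circ\chi_f$ and $\chi_f$ is onto. Put $\widetilde{K}:=\chi_f^{-1}(K)$. Since $\chi_f$ is continuous and $K$ is compact, $\widetilde{K}$ is closed; since $K$ is $f$-invariant and $\chi_f$ intertwines $\sigma$ with $f$, one has $\sigma(\widetilde{K})\subseteq\widetilde{K}$. Hence $\widetilde{K}$ is a subshift of $\Sigma_A$, and $\chi_f(\widetilde{K})=K$ because $\chi_f$ is surjective. (As $\chi_f$ is injective off a countable set, this identification is faithful for the uses made later.)

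Next I would invoke the classical fact that every subshift is a nested intersection of subshifts of finite type. For $n\in\mathbb{N}$ let $\mathcal{L}_n(\widetilde{K})$ be the finite set of length-$n$ words occurring in points of $\widetilde{K}$, and let $Y_n$ be the set of all $\underline{x}\in\Sigma_A$ whose length-$n$ subwords all lie in $\mathcal{L}_n(\widetilde{K})$. Each $Y_n$ is a subshift of finite type; $\widetilde{K}\subseteq Y_{n+1}\subseteq Y_n$, because a length-$n$ subword of a length-$(n+1)$ word that occurs in $\widetilde{K}$ again occurs in $\widetilde{K}$; and $\bigcap_{n}Y_n=\widetilde{K}$, since if $\underline{x}$ lies in every $Y_n$ then each prefix of $\underline{x}$ occurs in $\widetilde{K}$, so appropriate shifts of points of $\widetilde{K}$ converge to $\underline{x}$, and $\widetilde{K}$ is closed and shift-invariant.

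Finally set $SFT_n:=\chi_f(Y_n)\subseteq[0,1)$. Each $SFT_n$ is a compact $f$-invariant set realised by an SFT, and $SFT_{n+1}\subseteq SFT_n$. The inclusion $K\subseteq\bigcap_n SFT_n$ is immediate; for the reverse I would use compactness. If $y\in\bigcap_n\chi_f(Y_n)$, then the sets $\chi_f^{-1}(y)\cap Y_n$ are nonempty, compact and nested, hence have a common point $\underline{x}\in\bigcap_n Y_n=\widetilde{K}$ with $\chi_f(\underline{x})=y$, so $y\in\chi_f(\widetilde{K})=K$. Thus $\bigcap_n SFT_n=\chi_f\bigl(\bigcap_n Y_n\bigr)=K$. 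The hypothesis $K\neq SFT$ enters only at the very end: if the sequence $(SFT_n)$ were eventually constant, its value would be $\bigcap_n SFT_n=K$, forcing $K$ to be an SFT; hence $SFT_{n+1}\subsetneq SFT_n$ for infinitely many $n$, and passing to that subsequence and relabelling yields a strictly decreasing sequence of SFTs whose intersection is still $K$. I expect the one point genuinely deserving care to be the commutation of the infinite intersection with the non-injective map $\chi_f$, which is exactly what the compactness argument above supplies; the remainder is bookkeeping about occurrences of words.
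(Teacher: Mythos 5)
Your proposal is correct and follows essentially the same route as the paper: pull $K$ back to the symbolic model via the coding map, build a decreasing sequence of SFTs whose intersection is $\widetilde{K}$, and push forward. The paper constructs the $SFT_n$ by enumerating the (countably many) forbidden blocks of $\widetilde{K}$ and truncating, while you use the $n$-step language approximation $Y_n$; these give the same nested family up to cofinal reindexing. Likewise, the paper justifies commuting $\chi_f$ with the intersection by citing that fibers of the coding map are finite, whereas you use compactness and the finite-intersection property of the nonempty nested sets $\chi_f^{-1}(y)\cap Y_n$ — a slightly cleaner argument that does not actually need finiteness of fibers. Both implementation choices are sound; the substance is the same.
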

\begin{proof}
Let $K\in \mathcal{K}_{f}$. Set $\tilde{K}: =h^{-1}(K)$, where $h:(\Sigma_d, \sigma) \to ([0,1), f)$ is the respective coding map. Then $\tilde{K}$ is a compact subshift of $\Sigma_d$ and let $L_{\tilde{K}}:=\{w_1, w_2, \dots \}$ be the (countable) set of all the forbidden blocks for $\tilde{K}$. If $L_{\tilde{K}}$ is finite then $\tilde{K}$ is a subshift of finite type. If not, we consider for each $n\in \mathbb{N}$, $L_n:=\{w_1, w_2, \ldots , w_n\}$ and we consider the respective subshift of finite type, $SFT_n$. Then clearly \mbox{$SFT_{n+1}\subset SFT_n$} and $\tilde{K}=\cap_{n\geq 1}SFT_n$. By the nesting property of $\{SFT_n\}$ and since every $x\in [0,1]$ has a finite number of $h$-preimages in $\Sigma_d^+$, we have that $K = \cap_{n\geq 1} S_n^{\ast}$, where $S_n^{\ast}=h(SFT_n)$.
\end{proof}

By Lemma \ref{Lemma semicontinuity of entropy} and Lemma \ref{Lemma  density of SFT from above} we can deduce immediately the following corollary.

\begin{corollary} \label{Cor. d(SFT) approaches d(K)} 
	Let $K\in \mathcal{K}_D$. For every $\varepsilon \ge 0$, $\exists SFT$ such that $K\subset SFT$ and \mbox{$d(SFT)-\varepsilon < d(K)$}.
	In particular, if $d(K)< \frac{1}{2}$, $\exists SFT$ such that $K\subset SFT$ and \mbox{$d(SFT)<\frac{1}{2}$}.
\end{corollary}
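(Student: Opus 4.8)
The plan is to deduce the corollary directly from Lemma~\ref{Lemma semicontinuity of entropy} and Lemma~\ref{Lemma  density of SFT from above}, translated through the dictionary of Theorem~\ref{Th. upper box dim = Hausdorff dim = entropy} (so that ``$d(\cdot)$'', the Hausdorff/box dimension, is just $h_{top}(\cdot)/\log 2$). First I would dispose of the trivial case: if $K$ is itself an $SFT$, take $SFT = K$ and there is nothing to prove. So assume $K \neq SFT$. By Lemma~\ref{Lemma  density of SFT from above}, writing $\tilde K = h^{-1}(K) \subset \Sigma_2$ for the coded subshift, we obtain a nested sequence $SFT_{n+1} \subset SFT_n$ of subshifts of finite type with $\tilde K = \bigcap_{n\in\mathbb{N}} SFT_n$, and correspondingly $K = \bigcap_{n} S_n^{\ast}$ with $S_n^{\ast} = h(SFT_n) \supset K$ a nested sequence of compact $D$-invariant subsets of $[0,1]$, each $S_n^\ast$ being the image of an $SFT$.

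Next I would apply Lemma~\ref{Lemma semicontinuity of entropy} to the nested sequence $\{SFT_n\}$: it gives $h_{top}(\tilde K) = \lim_{n\to\infty} h_{top}(SFT_n) = \inf_n h_{top}(SFT_n)$, the infimum being attained in the limit because the sequence $\{h_{top}(SFT_n)\}$ is decreasing. Passing through the coding map (which is finite-to-one and a topological factor, so preserves topological entropy on these invariant sets) and dividing by $\log 2$, this reads $d(K) = \lim_{n\to\infty} d(S_n^\ast) = \inf_n d(S_n^\ast)$. Hence for any given $\varepsilon \geq 0$ there exists $n$ with $d(S_n^\ast) < d(K) + \varepsilon$, i.e.\ $d(S_n^\ast) - \varepsilon < d(K)$, and $S_n^\ast$ is an $SFT$ (image) containing $K$; this is the first assertion. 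For the second, if $d(K) < \tfrac12$, pick $\varepsilon$ with $0 < \varepsilon < \tfrac12 - d(K)$; then the $SFT$ produced above satisfies $d(SFT) < d(K) + \varepsilon < \tfrac12$, as claimed.

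The only genuine subtlety — and the point I would be most careful about — is the identification $d(SFT_n) = d(S_n^\ast)$, i.e.\ that the coding map does not distort dimension here. This is exactly where one needs that $\chi = h$ is Hölder, injective off an at most countable exceptional set, and that $D$ is piecewise linear with constant expansion $2$: then Theorem~\ref{Th. upper box dim = Hausdorff dim = entropy} applies verbatim to each $S_n^\ast$ (it is a compact $D$-invariant set) and to $K$, giving $d(S_n^\ast) = h_{top}(S_n^\ast)/\log 2 = h_{top}(SFT_n)/\log 2$ and likewise for $K$. Everything else is bookkeeping: the nesting from Lemma~\ref{Lemma  density of SFT from above}, the entropy limit from Lemma~\ref{Lemma semicontinuity of entropy}, and the elementary choice of $\varepsilon$ for the ``in particular'' clause. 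So the corollary really is immediate once those two lemmas are in hand, which is presumably why the authors state it as a corollary.
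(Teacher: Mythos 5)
Your proof is correct and follows the same route the paper intends: Lemma~\ref{Lemma  density of SFT from above} to write $K$ as a nested intersection of (images of) subshifts of finite type, Lemma~\ref{Lemma semicontinuity of entropy} for the entropy limit, and Theorem~\ref{Th. upper box dim = Hausdorff dim = entropy} to convert entropy to dimension; the paper simply states the corollary as ``immediate'' from these, and you have filled in exactly those details. One small point worth flagging (really a typo in the statement, not a flaw in your argument): the bound $d(SFT)-\varepsilon<d(K)$ cannot hold when $\varepsilon=0$, since $K\subset SFT$ forces $d(K)\le d(SFT)$, so ``$\varepsilon\ge 0$'' should read ``$\varepsilon>0$'' --- and indeed you use strictly positive $\varepsilon$ in the ``in particular'' clause.
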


\begin{note}
	Observe that, even if not stated as generally as possible, all the results hold for every symbolic system and thus, not only for the doubling map, but also for any expanding system of the circle. Only the general notion of the shift spaces and subshifts of finite type were used, as well as the corresponding coding mapping of each such system mentioned above.
\end{note}

\smallskip

Let $f\in \mathcal{E}^{\alpha}$, $\alpha>1$ and $\sigma: \Sigma_{m+1} \to \Sigma_{m+1}$ be its corresponding coding space, where $ \Sigma_{m+1}= \{ 0,1, \ldots , m\}^{\mathbb{N}}$. Instead of considering the sets $M_c$, $M_{c}'$ and $M_{c,d}$, $M_{c,d}'$, we consider the corresponding sets $M_{\underline{c}}$, $M_{\underline{c}}'$ and $M_{\underline{c},\underline{d}}$, $M_{\underline{c},\underline{d}}'$, where $\underline{c}$, $\underline{d} \in \Sigma_{m+1}$ and  $\underline{c} < \underline{d}$. Let $K$ be a compact $f$-invariant set. We consider the corresponding invariant set $S$ in $\Sigma_{m+1}$. The right endpoint, $\underline{x}_r$, of $S$, since it is invariant, has the property $\sigma^{n}(\underline{x}_r) \leq \underline{x}_r$, for all $n\in \mathbb{N}$. Furthermore $\underline{x}_r$ correspond to the right endpoint of $K$, $x_r$. 

Observe that the set if $\underline{x}$ is such that there is an $n$ so that $\sigma^{n}(\underline{x})=\underline{x}$, then for $n_0= \min \{n\in\mathbb{N}: \ \sigma^{n}(\underline{x})=\underline{x} \}$, we have that $\underline{x}$ is of the form,
$$\underline{x}=x_1x_2 \ldots x_{n_0} x_1x_2\ldots x_{n_0} x_1x_2 \ldots x_{n_0} \ \ldots \ = \ (x_1x_2 \ldots x_{n_0})^{\infty}$$
Therefore, if $B_n$ denotes a block of length $n$ from the alphabet $\{0,1, \ldots , m \}$, we have that,
$$\{\underline{x}\in \Sigma_{m+1} : \ \exists n_0=n_0(\underline{x}) \text{ such that } \sigma^{n_0}(\underline{x})=\underline{x}\} 
= 
\bigcup_{n=1}^{\infty} \{ (B_n)^{\infty}: \ B_n \text{ is an } n \text{-block}\}$$
where the last set is countable as countable union of finite sets.
Therefore the Hausdorff dimension is not affected if we exclude those points. In particular, for our dimensional related results it suffices to define $M_{\underline{c}}$ and $M_{\underline{c}, \underline{d}}$, with strict inequalities.

\begin{definitions}\hfill \label{definitions}
\begin{enumerate}
\item $M:=\{\underline{x}\in \Sigma_{m+1} : \ \sigma^n(\underline{x}) < \underline{x} , \ \forall n>0\}$.
\item $M_{\underline{c}}' := \{ \underline{x}\in \Sigma_{m+1} : \ \sigma^n(\underline{x}) \leq \underline{c} , \ \forall n\geq 0 \}$.
\item $M_{\underline{c}} := \{ \underline{x}\in \Sigma_{m+1} : \ \sigma^n(\underline{x})< \underline{x} \leq \underline{c} , \ \forall n> 0 \}$.
\item \label{reversing lexicograpfic} If $\underline{x}=(x_1,x_2,\ldots) \in \Sigma_{m+1}$ then, $\tilde{\underline{x}}:= (m-x_1,m-x_2,\ldots)$. If $A\subset \Sigma_{m+1}$ then, $\widetilde{A}:=\{ \underline{x}\in \Sigma_{m+1} : \ \tilde{\underline{x}} \in A \}$.
\end{enumerate}
\end{definitions}

\begin{example} \label{example of inversion}
\begin{align*}
\widetilde{M_{\underline{\widetilde{c}}}'} 
&= \ 
\{ \underline{x}\in \Sigma_{m+1} : \ \tilde{\underline{x}} \in M_{\underline{\widetilde{c}}}'\} 
= \ 
\{ \underline{x}\in \Sigma_{m+1} : \ \sigma^n(\tilde{\underline{x}}) \leq \underline{\widetilde{c}}  , \ \forall n\geq 0\} 
\\&= \ 
\{ \underline{x}\in \Sigma_{m+1} : \ \widetilde{\sigma^n(\underline{x})} \leq \underline{\widetilde{c}}  , \ \forall n\geq 0\} 
= \ 
\{ \underline{x}\in \Sigma_{m+1} : \ \sigma^n(\underline{x}) \geq c  , \ \forall n\geq 0 \}
\end{align*}
\end{example}

The definitions above are related to the compact $f$-invariant sets through the coding map. More specifically $M_{\underline{c}}'$ contains the largest invariant set that lies inside the subinterval $[\underline{0}, \underline{c}]$, since the orbit of any $\underline{x}$ in there does not escape this interval. From the discussion above, the set $M_{\underline{c}}$ contains all the right end points of all the invariant sets that are contained in $M_{\underline{c}}'$, up to a set of zero Hausdorff dimension.

\subsection{$\beta$-shift}
\begin{definition} 
Let $\beta \in (1, +\infty)$. Then, for every $x\in [0,1]$, the $\beta$-expansion or the expansion in base $\beta$ of $x$ is a sequence of integers out of $\{0,1,2,\ldots, [\beta]\}$, such that $x_n = [\beta T_{\beta}^{n-1}(x) ] $, where $T_{\beta}:[0,1)\to [0,1)$ is defined by $T_{\beta}(x):= \beta x$ (mod$1$).
\end{definition}

\begin{definition}
The closure of the set of all $\beta$-expansions of $x \in [0,1)$ is called the $\beta$-shift $S_{\beta}$.
\end{definition}

\begin{definition} 
The $\beta$-expansion of $1$, for some $\beta>1$, is denoted by $1_{\beta}$.
\end{definition}

W. Parry showed in \cite{Parry beta} that the $\beta$-shift, $S_{\beta}$, is completely characterised by its expansions of $1$.

\begin{proposition}\label{Parry 0} 
If the $\beta$-expansion of $1$ is $(\beta_1, \beta_2, \ldots )$ and the $\beta'$-expansion of $1$ is $(\beta_1', \beta_2', \ldots )$
then $\beta > \beta'$ if and only if $(\beta_1, \beta_2, \ldots ) > (\beta_1', \beta_2', \ldots )$.
\end{proposition}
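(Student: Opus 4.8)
The plan is to establish the slightly stronger fact that $\beta \mapsto 1_\beta$ is strictly increasing for the lexicographic order, from which both directions of the equivalence follow by a short formal argument. I will use two standard properties of the greedy expansion $1_\beta = (\beta_n)_{n\ge1}$: that $\sum_{n\ge1}\beta_n\beta^{-n}=1$ — which follows from the identity $1-\sum_{j=1}^{n}\beta_j\beta^{-j}=\beta^{-n}T_\beta^{n}(1)$ together with $T_\beta^{n}(1)\in[0,1)$ — and that every digit satisfies $\beta_n\in\{0,1,\ldots,\lfloor\beta\rfloor\}$, so that for any sequence $\underline a=(a_n)$ with digits in $\{0,\ldots,\lfloor\beta\rfloor\}$ the value $v_{\underline a}(\beta):=\sum_{n\ge1}a_n\beta^{-n}$ is a finite real number.

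The crux is the following comparison lemma: \emph{if $\underline c=(c_n)$ has $c_n\in\{0,1,\ldots,\lfloor\beta\rfloor\}$ for all $n$ and $v_{\underline c}(\beta)\le1$, then $\underline c\le 1_\beta$ lexicographically.} I would prove it by contradiction: if $\underline c>1_\beta$, let $k$ be the first coordinate at which they differ, so $c_j=\beta_j$ for $j<k$ and $c_k\ge\beta_k+1$ (in particular $\beta_k\le\lfloor\beta\rfloor-1$). Dropping the nonnegative tail and using $\sum_{j\le k}\beta_j\beta^{-j}=1-\beta^{-k}T_\beta^{k}(1)$ gives
$$v_{\underline c}(\beta)\ \ge\ \sum_{j<k}\beta_j\beta^{-j}+(\beta_k+1)\beta^{-k}\ =\ 1+\beta^{-k}\big(1-T_\beta^{k}(1)\big)\ >\ 1,$$
a contradiction, because $T_\beta^{k}(1)\in[0,1)$ makes the last term strictly positive. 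This is the one step where the greedy structure is genuinely used — each $T_\beta^k(1)$ is an honest remainder, hence strictly less than $1$ — and I expect it to be the only delicate point of the whole proof.

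Granting the lemma, the forward implication is immediate. Let $\beta>\beta'>1$; since $\lfloor\beta'\rfloor\le\lfloor\beta\rfloor$, the sequence $1_{\beta'}=(\beta_n')$ has all digits in $\{0,\ldots,\lfloor\beta\rfloor\}$, and because $\beta_1'=\lfloor\beta'\rfloor\ge1$ and $\beta^{-n}<(\beta')^{-n}$ for all $n\ge1$ we get $v_{1_{\beta'}}(\beta)=\sum_n\beta_n'\beta^{-n}<\sum_n\beta_n'(\beta')^{-n}=1$. The lemma now yields $1_{\beta'}\le 1_\beta$; and $1_{\beta'}\ne 1_\beta$, for otherwise $v_{1_{\beta'}}(\beta)=v_{1_\beta}(\beta)=1$, contradicting the strict inequality just obtained. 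Hence $1_{\beta'}<1_\beta$. For the converse, suppose $1_\beta>1_{\beta'}$ and, toward a contradiction, $\beta\le\beta'$: if $\beta=\beta'$ then $1_\beta=1_{\beta'}$, while if $\beta<\beta'$ the forward implication (with the roles of $\beta,\beta'$ exchanged) gives $1_\beta<1_{\beta'}$ — either way contradicting $1_\beta>1_{\beta'}$. Therefore $\beta>\beta'$, which finishes the proof. (Should one prefer the quasi-greedy convention for $1_\beta$, the only modification is that $T_\beta^k(1)$ can equal $1$ when $1_\beta$ is periodic; one then has to exclude the borderline case $\underline c=(\beta_1,\ldots,\beta_{k-1},\beta_k+1,0,0,\ldots)$ in the lemma, which is handled by the fact that the quasi-greedy expansion is never eventually zero.)
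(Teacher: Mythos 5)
The paper itself offers no proof of this statement: Proposition~\ref{Parry 0} is quoted as a known result of Parry (preceded by ``W.\ Parry showed in \cite{Parry beta} that\dots'') with no argument supplied, so there is no in-paper proof to compare your proposal against. Your argument is nonetheless correct, and it is essentially the classical one from Parry's 1960 paper and standard references on $\beta$-expansions. The three ingredients you use — the telescoping identity $1-\sum_{j=1}^{n}\beta_j\beta^{-j}=\beta^{-n}T_\beta^{n}(1)$, the bound $T_\beta^{n}(1)\in[0,1)$ for $n\ge 1$, and the comparison lemma (a special case of what is usually called Parry's criterion: among sequences with digits in $\{0,\dots,\lfloor\beta\rfloor\}$, value $\le 1$ at $\beta$ forces lexicographic domination by $1_\beta$) — are exactly the standard ones, and the deduction of strict monotonicity of $\beta\mapsto 1_\beta$ from the lemma, followed by the formal converse, is complete. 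Two small remarks: the strict inequality $v_{1_{\beta'}}(\beta)<1$ does indeed require a nonzero digit, and your observation that $\beta_1'=\lfloor\beta'\rfloor\ge 1$ because $\beta'>1$ handles this cleanly; and the closing caveat about the quasi-greedy convention is unnecessary in the context of this paper, since its definition of the $\beta$-expansion (digits $x_n=[\beta T_\beta^{n-1}(x)]$, $T_\beta$ acting on $[0,1)$) is the greedy one, which is precisely the case your main argument treats.
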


\begin{theorem} \textup{(Parry)} \label{Parry 1} 
If $1_{\beta}$ is not finite, then an $\underline{s}\in \{0,1,2,\ldots, [\beta]\}^{\mathbb{N}}$ belongs to $S_{\beta}$ if and only if
$$\sigma^{n}(\underline{s}) < 1_{\beta}, \quad \forall n \geq 1$$
If $1_{\beta}$ is of the form $i_1i_2\ldots i_M0^{\infty}$, then $\underline{s} \in \{0,1,2,\ldots, [\beta]\}^{\mathbb{N}}$ belongs to $S_{\beta}$ if and only if,
$$\sigma^{n}(\underline{s}) < \left(i_1i_2\ldots i_{M-1}(i_M-1)\right)^{\infty}, \quad \forall n \geq 1$$
\end{theorem}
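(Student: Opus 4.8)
The cleanest route is to invoke Parry's original work \cite{Parry beta}; the proof one would otherwise give proceeds as follows. The engine is the $\beta$-expansion map $d\colon[0,1)\to\{0,1,\dots,[\beta]\}^{\mathbb{N}}$, $d(x)=(x_1,x_2,\dots)$, and I would open by recording its three basic properties, all immediate from the definition of $T_\beta$: (i) $x=\sum_{n\ge 1}x_n\beta^{-n}$; (ii) $\sigma\circ d=d\circ T_\beta$, so $\sigma^n\big(d(x)\big)=d\big(T_\beta^n(x)\big)$ for all $n$; and (iii) the monotonicity/value lemma: $d$ is strictly increasing for the lexicographic order, and no $\beta$-expansion of a point of $[0,1)$ is lexicographically $\ge 1_\beta$. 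Property (iii) is the quantitative core, proved by a direct estimate: running the greedy recipe at $1$ produces $1_\beta=(t_1,t_2,\dots)$, and if a sequence $(a_n)$ with $\sum_n a_n\beta^{-n}\le 1$ agreed with $1_\beta$ through index $k-1$ but had $a_k\ge t_k+1$, then summing geometric series gives value $\ge\sum_{i<k}t_i\beta^{-i}+(t_k+1)\beta^{-k}=1+\beta^{-k}\big(1-T_\beta^{k}(1)\big)>1$ since $T_\beta^{k}(1)\in[0,1)$, a contradiction. The same estimate, run at the last non-zero digit, shows that when $1_\beta=i_1\dots i_M0^\infty$ is finite the effective bound drops to the quasi-greedy sequence $1_\beta^\ast:=\big(i_1\dots i_{M-1}(i_M-1)\big)^\infty$, which one checks equals $\lim_{x\uparrow 1}d(x)$ (whereas $d(x)\uparrow 1_\beta$ in the infinite case).

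With (i)--(iii) in hand the forward implication is essentially free: if $\underline{s}=d(x)$ with $x\in[0,1)$, then $\sigma^n(\underline{s})=d\big(T_\beta^n(x)\big)$ with $T_\beta^n(x)\in[0,1)$, so by (iii) every shift of $\underline{s}$ stays below the relevant threshold ($<1_\beta$ in the infinite case, $\le 1_\beta^\ast$ in the finite case); and since the set of sequences obeying all these shift inequalities is closed in the product topology, the property survives passage to the closure $S_\beta$. The point to check here is that $1_\beta$ is never $\sigma$-periodic --- its $\sigma$-orbit is $\{d(T_\beta^n(1))\}_{n\ge 1}$ with $T_\beta^n(1)\in[0,1)$, which never returns to $1$ --- so in the infinite case the closed shift-invariant region is exactly $S_\beta$; in the finite case $1_\beta^\ast$ is periodic, but the strict inequalities remain self-consistent along its orbit, which is all that is required.

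For the converse I plan to start from a sequence $\underline{s}=(s_1,s_2,\dots)$ satisfying the stated condition, set $x:=\sum_{n\ge 1}s_n\beta^{-n}$, show $x\in[0,1]$, and then prove $d(x)=\underline{s}$ whenever $x<1$, with the single boundary value $x=1$ (possible only in the finite case) recovered from the closure. The identity $d(x)=\underline{s}$ I would get by induction on $n$: granting $x_1=s_1,\dots,x_{n-1}=s_{n-1}$, so that $T_\beta^{n-1}(x)=\sum_{j\ge 1}s_{n-1+j}\beta^{-j}$, one must verify $s_n\le\beta\,T_\beta^{n-1}(x)<s_n+1$; the lower bound is trivial, and the upper bound is precisely where the hypothesis $\sigma^{n-1}(\underline{s})<1_\beta$ (resp. $\le 1_\beta^\ast$) enters, by comparing the tail $\sum_{j\ge 1}s_{n-1+j}\beta^{-j}$ with $1=\sum_{j\ge 1}t_j\beta^{-j}$ through the same value estimate as in (iii).

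The step I expect to be the main obstacle is keeping the two regimes cleanly apart: pinning down why the finite case needs $1_\beta^\ast$ rather than $1_\beta$ (and that this is forced by the closure, not a matter of choice), and --- in the converse --- turning the purely lexicographic hypothesis on the shifts of $\underline{s}$ into the sharp two-sided digit bound $s_n\le\beta T_\beta^{n-1}(x)<s_n+1$: a one-sided comparison is easy, but showing the greedy digit is neither too large nor too small needs the value estimate of (iii) applied with some care. Everything else is routine manipulation of geometric series.
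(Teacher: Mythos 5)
The paper states Theorem~\ref{Parry 1} as a quoted result and supplies no proof of its own, so there is nothing internal to compare your argument against; what you sketch is, broadly, the standard greedy-algorithm proof (the shift/coding intertwining, the geometric-series value estimate, and the lexicographic digit comparison), which is indeed the right machinery and matches Parry's original route.

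There is, however, a genuine gap exactly at the point where you write that ``the set of sequences obeying all these shift inequalities is closed in the product topology.'' A set cut out by \emph{strict} inequalities $\sigma^{n}(\underline{s}) < 1_{\beta}$ for all $n\ge 1$ is not closed. Concretely, take $\underline{s} := 0\,1_{\beta}$ (the digit $0$ followed by the sequence $1_{\beta}$): for $x\in[0,1/\beta)$ one has $d(x)=0\,d(\beta x)$, and letting $x\uparrow 1/\beta$ gives $d(x)\to 0\,1_{\beta}$, so $\underline{s}\in S_{\beta}$; yet $\sigma(\underline{s})=1_{\beta}$, which is \emph{not} strictly below $1_{\beta}$. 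Your observation that $1_{\beta}$ is never $\sigma$-periodic blocks a different failure mode and does not repair this one. The root cause is that the theorem, as stated, conflates two distinct classical facts: Parry's characterization of the \emph{image} of $d$ (sequences that are actual greedy expansions of points of $[0,1)$) uses the strict inequality $\sigma^{n}(\underline{s})<1_{\beta}$ for all $n\ge 0$, whereas the characterization of the \emph{closure} $S_{\beta}$ requires the non-strict inequality $\sigma^{n}(\underline{s})\le 1_{\beta}^{\ast}$ for all $n\ge 0$, with $1_{\beta}^{\ast}$ the quasi-greedy expansion. To make the argument airtight you should decide which of these two statements you are actually proving; if the target really is $S_{\beta}$, replace the ``closed set'' step with the $\le$-characterization (taking the closure of the strict-inequality region does precisely this, since $1_{\beta}^{\ast}$ is the supremum of the admissible tails). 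The value estimate, the non-periodicity remark, and your converse direction are otherwise sound.
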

 
\begin{theorem} \textup{(Parry)} \label{Parry 2} 
A sequence $\underline{s}\in \{0,1,2,\ldots, [\beta]\}^{\mathbb{N}}$ is an expansion of $1$ for some $\beta$ if and only if, 
$\sigma^n(\underline{s}) < \underline{s}, \quad \forall n\geq 1$
and then $\beta$ is unique. Moreover, the map $\Xi : \beta \mapsto 1_{\beta}$ is monotone increasing.
\end{theorem}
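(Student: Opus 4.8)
\medskip
\noindent\textit{Proof strategy.}
The plan is to establish the two implications of the equivalence separately and then read off uniqueness and monotonicity. For a sequence $\underline{t}$ of non-negative integers and $\beta>1$ write $\rho_\beta(\underline{t}):=\sum_{k\ge1}t_k\beta^{-k}$, so that ``$\underline{s}=1_\beta$'' means precisely $\rho_\beta(\underline{s})=1$ \emph{and} $\underline{s}$ is produced by the greedy algorithm, i.e.\ $s_n=\lfloor\beta\,T_\beta^{n-1}(1)\rfloor$. Set $y_n:=\rho_\beta(\sigma^n\underline{s})=\sum_{k\ge1}s_{n+k}\beta^{-k}$. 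Then $y_0=1$, and the one fact I will use repeatedly is that, by the greedy construction, $y_n=T_\beta^{n}(1)\in[0,1)$ for every $n\ge1$, together with the telescoping identity $\beta\,y_{n-1}=s_n+y_n$.

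\emph{Necessity.} Assuming $\underline{s}=1_\beta$, I would first record the elementary consequence of $\rho_\beta(\underline{s})=1$ and $y_j<1$, namely $0\le1-\sum_{k=1}^{j}s_k\beta^{-k}=\beta^{-j}y_j<\beta^{-j}$ for all $j\ge1$. Now suppose $\sigma^n\underline{s}\ge\underline{s}$ for some $n\ge1$. Equality is impossible, as it would give $y_n=y_0=1$; and $\sigma^n\underline{s}>\underline{s}$ is impossible too: if $j$ is the first coordinate at which the two sequences differ, truncating the series for $y_n$ at the $j$-th term and using $s_{n+j}\ge s_j+1$ yields $y_n\ge\sum_{k=1}^{j-1}s_k\beta^{-k}+(s_j+1)\beta^{-j}=\sum_{k=1}^{j}s_k\beta^{-k}+\beta^{-j}>1$, contradicting $y_n=T_\beta^{n}(1)<1$.

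\emph{Sufficiency.} Conversely, suppose $\sigma^n\underline{s}<\underline{s}$ for every $n\ge1$. Comparing first coordinates gives $s_1=\max_n s_n$, and $s_1\ge1$ (otherwise $\underline{s}=0^\infty$, which is $\sigma$-fixed). The function $f(\beta):=\rho_\beta(\underline{s})$ is continuous and strictly decreasing on $(1,\infty)$, with $f(\beta)\to0$ as $\beta\to\infty$ and $f(\beta)\to\sum_n s_n$ as $\beta\to1^+$; setting aside the single degenerate sequence $10^\infty$ (which is not $1_\beta$ for any $\beta>1$, and is viewed as the limiting value $\beta=1$), one checks $\sum_n s_n\ge2$, so there is a unique $\beta_0>1$ with $f(\beta_0)=1$. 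The heart of the argument is the \emph{claim} that $y_n:=\rho_{\beta_0}(\sigma^n\underline{s})<1$ for all $n\ge1$. To prove it, fix $n$ and let $j=j(n)$ be the first coordinate at which $\sigma^n\underline{s}$ and $\underline{s}$ differ; the same truncation as above, now combined with $\rho_{\beta_0}(\underline{s})=1$, gives the self-referential inequality $y_n-1\le\beta_0^{-j}\bigl(y_{n+j}-y_j-1\bigr)$. Since the $y_m$ are bounded (by $s_1/(\beta_0-1)$), feeding these inequalities for all $n$ into a supremum argument and using $\beta_0>1$ forces $\sup_m y_m\le1$; and if $y_{n_0}=1$ for some $n_0$, the inequality for $n_0$ forces $y_{j(n_0)}=0$ and $y_{n_0+j(n_0)}=1$, which is absurd because $y_{j(n_0)}=0$ means $\underline{s}$ is identically $0$ past position $j(n_0)$, whence $y_m=0$ for all $m\ge j(n_0)$. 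With the claim in hand, $\beta_0 y_{n-1}=s_n+y_n$ and $y_n\in[0,1)$ give, by induction, $y_n=T_{\beta_0}^{n}(1)$ and $s_n=\lfloor\beta_0\,T_{\beta_0}^{n-1}(1)\rfloor$, i.e.\ $\underline{s}=1_{\beta_0}$.

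\emph{Uniqueness and monotonicity.} Uniqueness of $\beta$ is immediate: if $\underline{s}=1_\beta=1_{\beta'}$ then both $\beta,\beta'$ solve $\rho_x(\underline{s})=1$, and $x\mapsto\rho_x(\underline{s})$ is strictly decreasing on $(1,\infty)$. The monotonicity of $\Xi:\beta\mapsto1_\beta$ is already contained in Proposition \ref{Parry 0}; alternatively I would argue directly that for $\beta<\beta'$ one has $\rho_{\beta'}(1_\beta)<\rho_\beta(1_\beta)=1$, while the prefix estimate combined with the fact that $T_{\beta'}^{m}(1)<1$ for $m\ge1$ makes $1_{\beta'}<1_\beta$ impossible, leaving $1_\beta<1_{\beta'}$. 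I expect the main obstacle to be the claim in the sufficiency step: because of carrying in the non-integer base $\beta_0$, a single lexicographic inequality $\sigma^n\underline{s}<\underline{s}$ does not control the real value $\rho_{\beta_0}(\sigma^n\underline{s})$, so one is forced to bound all the tails below $1$ simultaneously through the self-referential inequality above. A secondary point that must not be overlooked is the degenerate sequence $10^\infty$, which satisfies the lexicographic condition yet fails to be an expansion of $1$ for any $\beta>1$.
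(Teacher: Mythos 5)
The paper does not prove this theorem at all --- it is quoted verbatim from Parry's 1960 paper, labelled ``(Parry)'', with no proof and only a citation to \cite{Parry beta}. So there is no ``paper's own proof'' to compare yours against; what you have produced is a self-contained proof of a result that the paper treats as a black box.

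Your argument is correct. Necessity is the standard computation: $\underline{s}=1_\beta$ forces $\rho_\beta(\underline{s})=1$ and $y_n=T_\beta^n(1)\in[0,1)$ for $n\ge1$, and the truncation estimate combined with $y_j<1$ kills both $\sigma^n\underline{s}=\underline{s}$ and $\sigma^n\underline{s}>\underline{s}$. In the sufficiency direction you correctly identify the crux: one cannot read off $y_n<1$ from a single lexicographic comparison, so you derive the recursive bound $y_n-1\le\beta_0^{-j(n)}\bigl(y_{n+j(n)}-y_{j(n)}-1\bigr)$ (which I checked and is right), and the bootstrapping via $M:=\sup_{m\ge1}y_m<\infty$ and $\beta_0>1$ does force $M\le1$; the follow-up rigidity argument that $y_{n_0}=1$ would force $y_{j(n_0)}=0$, hence the tail is $0^\infty$, hence $y_{n_0+j(n_0)}=0\ne1$, cleanly eliminates equality. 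You also correctly flag the two degenerate exclusions ($\underline{s}=0^\infty$ giving $s_1\ge1$, and $\underline{s}=10^\infty$). Uniqueness from strict monotonicity of $\beta\mapsto\rho_\beta(\underline{s})$ is immediate, and for monotonicity of $\Xi$ you may either invoke Proposition~\ref{Parry 0} (itself an unproven Parry citation in the paper, so this is not a fully independent derivation) or use the direct argument you sketch, which does close: if $1_{\beta'}<1_\beta$ at position $j$ then the prefix estimate for $1_{\beta'}$ in base $\beta'$ gives $\rho_{\beta'}(1_\beta)>1$, contradicting $\rho_{\beta'}(1_\beta)<\rho_\beta(1_\beta)=1$. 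The only place I would tighten the exposition is the supremum step: spell out that from $y_n\le1+\beta_0^{-1}(M-1)$ for all $n\ge1$ one gets $M(1-\beta_0^{-1})\le1-\beta_0^{-1}$, hence $M\le1$, since as written ``a supremum argument forces'' is asking the reader to reconstruct this line.
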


Let $1< \beta_1 < \beta_2$. Then all sequences of expansions of one, $1_{\beta}$, for $\beta_1 < \beta < \beta_2$ are at the same time expansions for some $x\in [0,1)$ in base $\beta_2$. Let us denote the set of those $x$'s by $I(\beta_1 , \beta_2)$. If $ \pi : [0, 1) \to S_{\beta_2}$ is the map assigning to each
$x \in [0, 1)$ its $\beta_2$-expansion we define the map, (depending on $\beta_1$ and $\beta_2$), $$\rho_{\beta_1, \beta_2} : I(\beta_1 , \beta_2 ) \to [\beta_1 , \beta_2 ]$$
by setting $\rho_{\beta_1, \beta_2}(x)$ to be the unique $\beta \in [\beta_1 , \beta_2 ]$ having $\pi(x)$ as its expansion of $1$, i.e. $\pi(x)=1_{\beta}$. 

In \cite{Jorg}, J.  Schmeling showed that the map $\rho_{\beta_1, \beta_2}$ is H\"older continuous and calculated the H\"older-exponent.

\begin{theorem} \textup{(Schmeling)} \label{Schmeling} 
The map $\rho = \rho_{\beta_1, \beta_2} : I(\beta_1 , \beta_2) \to [\beta_1, \beta_2]$ satisfies the H\"older condition, 
$$ |\rho(\underline{u}) - \rho(\underline{v})| \leq C \cdot d(\underline{u}, \underline{v})^{\ln \beta_1 / \ln \beta_2}$$
where $d$ is the metric on $\Sigma_{[\beta_2]}$.
\end{theorem}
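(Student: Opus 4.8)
The plan is to reduce the statement to the combinatorics of $\beta$-expansions of $1$ together with one elementary two-sided estimate on a ``common prefix polynomial''. Write $\underline u = 1_{\beta}$ and $\underline v = 1_{\beta'}$ with $\beta = \rho(\underline u),\ \beta' = \rho(\underline v) \in [\beta_1,\beta_2]$, so that $\rho$ is just the inverse of $\Xi$ restricted to $[\beta_1,\beta_2]$ (well defined by Theorem \ref{Parry 2}). I would first dispose of the trivial case $\beta = \beta'$ and assume $\beta < \beta'$; by Proposition \ref{Parry 0} the sequences $1_{\beta} < 1_{\beta'}$ then disagree, and I let $k \ge 1$ be the first coordinate of disagreement, so $(1_{\beta})_i = (1_{\beta'})_i =: a_i$ for $i < k$, while $a_k := (1_{\beta})_k \le (1_{\beta'})_k - 1 =: a_k' - 1$. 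With the shift metric normalized so that $d(\underline u,\underline v) = \beta_2^{-k}$, the identity $\beta_2^{\ln\beta_1/\ln\beta_2} = \beta_1$ gives $d(\underline u,\underline v)^{\ln\beta_1/\ln\beta_2} = \beta_1^{-k}$, so the goal becomes: produce $C = C(\beta_1,\beta_2)$ with $\beta' - \beta \le C\,\beta_1^{-k}$.

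The ingredients I would use are the defining identity $1 = \sum_{i\ge 1}(1_{\gamma})_i\,\gamma^{-i}$, valid for every $\gamma > 1$ (trailing zeros take care of a terminating expansion), the induced tail bound $\sum_{i>k}(1_{\gamma})_i\,\gamma^{-i} = \gamma^{-k}\,T_{\gamma}^{k}(1) \in [0,\gamma^{-k})$, and the digit bound $(1_{\gamma})_k \le [\gamma] \le [\beta_2]$. Introduce $S(t) := \sum_{i=1}^{k-1} a_i\,t^{-i}$. The lower bound comes from the fact that the leading digit is forced to be nonzero, $a_1 = (1_{\beta})_1 = [\beta] \ge 1$ since $\beta > 1$, whence (all $a_i \ge 0$, $\beta < \beta'$)
$$S(\beta) - S(\beta') \ \ge \ a_1\big(\beta^{-1}-\beta'^{-1}\big) \ \ge \ \beta^{-1}-\beta'^{-1} \ = \ \frac{\beta'-\beta}{\beta\beta'} \ \ge \ \frac{\beta'-\beta}{\beta_2^{2}}.$$
The upper bound comes from subtracting the two identities $1 = S(\beta) + a_k\beta^{-k} + \sum_{i>k}(1_{\beta})_i\beta^{-i}$ and $1 = S(\beta') + a_k'\beta'^{-k} + \sum_{i>k}(1_{\beta'})_i\beta'^{-i}$, which yields
$$S(\beta) - S(\beta') \ = \ a_k'\beta'^{-k} - a_k\beta^{-k} + \sum_{i>k}(1_{\beta'})_i\beta'^{-i} - \sum_{i>k}(1_{\beta})_i\beta^{-i} \ \le \ a_k'\beta'^{-k} + \beta'^{-k} \ \le \ ([\beta_2]+1)\,\beta_1^{-k},$$
after discarding the two nonnegative subtracted terms and using $\beta' \ge \beta_1$. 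Chaining the two displays gives $\beta' - \beta \le ([\beta_2]+1)\,\beta_2^{2}\,\beta_1^{-k}$, i.e. the asserted Hölder inequality with $C = ([\beta_2]+1)\beta_2^{2}$.

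A few routine checks remain. The case $k = 1$ ($S$ empty) is handled directly by $\beta' - \beta \le \beta_2 - \beta_1 < \beta_2^{2}\beta_1^{-1}$, absorbed by the same $C$; the identity $1 = \sum_i(1_{\gamma})_i\gamma^{-i}$ and the bound $T_{\gamma}^{k}(1) < 1$ survive a terminating $1_{\gamma}$ because $T_{\gamma}$ maps $[0,1)$ into itself; and any other normalization of the shift metric (e.g. $d(\underline u,\underline v) = \beta_2^{-(k-1)}$) only rescales $C$ by a $\beta_1,\beta_2$-dependent factor. One may also note that the exponent $\ln\beta_1/\ln\beta_2$ should be sharp: since one expects $|\gamma-\gamma'| \asymp \gamma^{-k}$ when $1_{\gamma}$ and $1_{\gamma'}$ first disagree at position $k$, while $d(1_{\gamma},1_{\gamma'}) = \beta_2^{-k}$, the best exponent available is $\inf_{\gamma\in[\beta_1,\beta_2]}\ln\gamma/\ln\beta_2 = \ln\beta_1/\ln\beta_2$, attained near $\gamma = \beta_1$.

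The single load-bearing idea — everything else being bookkeeping with the Rényi expansion — is the lower bound, specifically the observation that $a_1 = [\beta] \ge 1$ automatically. Without it $S(\beta) - S(\beta')$ could vanish (if all common digits were $0$) and would then carry no information about $\beta' - \beta$; it is exactly the hypothesis $\beta,\beta' > 1$ that forces the leading digit to be nonzero and converts the innocuous difference $S(\beta) - S(\beta')$ into a quantity comparable to $\beta' - \beta$. An alternative, essentially equivalent, route would be to show directly that $\gamma \mapsto 1_{\gamma}$ expands distances on $[\beta_1,\beta_2]$ and then invert, but routing the estimate through the single polynomial $S$ seems the most economical.
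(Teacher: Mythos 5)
The paper itself does not prove this statement: it is imported verbatim from Schmeling's 1997 paper \cite{Jorg} and used as a black box, so there is no proof in the text to compare against. Judged on its own terms, your argument is correct and complete. The two-sided estimate on the common prefix polynomial $S$ does exactly what is needed: the lower bound $S(\beta)-S(\beta')\ge (\beta'-\beta)/\beta_2^2$ rests on the fact that the leading digit $a_1=[\beta]\ge 1$ cannot vanish, and the upper bound $S(\beta)-S(\beta')\le([\beta_2]+1)\beta_1^{-k}$ follows by subtracting the two R\'enyi identities, dropping the nonnegative terms $a_k\beta^{-k}$ and the $\beta$-tail, and bounding the $\beta'$-tail by $\beta'^{-k}$. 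Combined with $d(\underline u,\underline v)^{\ln\beta_1/\ln\beta_2}=\beta_1^{-k}$ under the $\beta_2$-based normalization of the shift metric, this chains to the claimed H\"older bound with $C=([\beta_2]+1)\beta_2^2$. You also correctly note that the exponent $\ln\beta_1/\ln\beta_2$ is tied to that particular normalization: if one instead used a base-$r$ metric $r^{-k}$ with $r>\beta_2$ the inequality as stated would genuinely fail (not merely require a different constant), so it is worth being explicit, as you are, that $d$ must be the $\beta_2$-scaled metric on the coding space. The $k=1$ edge case and the terminating-expansion case are both handled. Your closing remark correctly isolates the load-bearing step: without $a_1\ge 1$ the difference $S(\beta)-S(\beta')$ carries no information about $\beta'-\beta$, and it is precisely $\beta>1$ that forces the leading digit to be nonzero.
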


\begin{definition}
The sequence $\underline{x} \in \Sigma_{m+1}$is called kneading if for all $n>1$, $\sigma^n(\underline{x}) < \underline{x}$.
\end{definition}

\begin{remark*}
By Theorem \ref{Parry 2} a kneading sequence corresponds to a $\beta$-expansion of $1$ for some $\beta \in (1,m]$. Therefore, $M_{\underline{c}}$ is in fact the set of all points in the interval $[\underline{0}, \underline{c}]$ that are expansion of $1$ for some $\beta\in (1,m]$.
\end{remark*}

\smallskip

\section{Structure of the set of D-invariant sets}
\begin{notation}
If $A$ is a subset of $[0,1)$ then we define \mbox{$\mathcal{U}_{\epsilon}^{[0,1)}(A):= \{x\in [0,1) : \ d(x, A) < \epsilon\}$.}
\end{notation}
\begin{lemma}(Maximality of SFT) \label{Lemma  SFT maximal}
If $\Sigma\in \mathcal{K}_D$ is a subshift of finite type, then there exists an $\epsilon_{\Sigma} > 0$ such that, if $K$ is a $D$-invariant set so that $K \subset \mathcal{U}_{\epsilon_{\Sigma}}^{[0,1)}(\Sigma)$, then $K \subset \Sigma$.
\end{lemma}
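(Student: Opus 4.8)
The plan is to work on the symbolic side via the coding map. Let $\widetilde{\Sigma} = h^{-1}(\Sigma) \subset \Sigma_2$; since $\Sigma$ is a subshift of finite type, $\widetilde{\Sigma}$ is described by finitely many forbidden words, all of which we may assume (after the standard padding argument in the Preliminaries) to have the same length $\ell$. Let $\mathcal{F} = \{w^{(1)}, \ldots, w^{(r)}\}$ be this finite set of forbidden $\ell$-blocks, so that a sequence $\underline{x}$ lies in $\widetilde{\Sigma}$ iff none of the blocks $w^{(j)}$ appears in $\underline{x}$. The key observation is that "avoiding a finite set of finite words" is an \emph{open} condition in a quantitative sense: if a point $\underline{y}$ is close enough to $\widetilde{\Sigma}$ in the metric $d_2$, then the first several coordinates of $\underline{y}$ agree with those of some point of $\widetilde{\Sigma}$, hence contain no forbidden block near the start.

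First I would fix a Markov partition $\mathcal{P}$ fine enough that each cylinder $[a_1 \ldots a_\ell]$ corresponds (under $\chi_D$) to an interval of diameter less than some $\delta$, and choose $\epsilon_\Sigma > 0$ small enough (using Hölder continuity of $\chi_D$ and its inverse on the relevant set) that $x \in \mathcal{U}_{\epsilon_\Sigma}^{[0,1)}(\Sigma)$ forces $h^{-1}(x)$ to share a long common prefix with some element of $\widetilde{\Sigma}$ — long enough to guarantee that prefix is a legal word of $\widetilde{\Sigma}$, i.e. contains no forbidden $\ell$-block. Now let $K$ be $D$-invariant with $K \subset \mathcal{U}_{\epsilon_\Sigma}^{[0,1)}(\Sigma)$, and let $\widetilde{K} = h^{-1}(K)$, a $\sigma$-invariant compact subshift. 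Take any $\underline{x} \in \widetilde{K}$ and suppose, for contradiction, that $\underline{x} \notin \widetilde{\Sigma}$; then some forbidden block $w^{(j)}$ occurs in $\underline{x}$, say starting at position $k$. Apply $\sigma^{k-1}$: since $\widetilde{K}$ is shift-invariant, $\sigma^{k-1}(\underline{x}) \in \widetilde{K}$, and now the forbidden block sits at the very front of $\sigma^{k-1}(\underline{x})$. But $\sigma^{k-1}(\underline{x}) \in \widetilde{K} = h^{-1}(K)$, so the corresponding point of $K$ lies in $\mathcal{U}_{\epsilon_\Sigma}^{[0,1)}(\Sigma)$, which by our choice of $\epsilon_\Sigma$ forces the initial $\ell$ coordinates of $\sigma^{k-1}(\underline{x})$ to form a legal word — contradicting that $w^{(j)}$ sits there. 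Hence $\widetilde{K} \subset \widetilde{\Sigma}$, and applying $h$ gives $K \subset \Sigma$. (One must handle the countably many boundary points where $h^{-1}$ is two-valued; as noted in the Preliminaries this set is countable and closed under nothing problematic, and the two codings of a boundary point both determine the same point of $[0,1)$, so the argument goes through by working with whichever preimage avoids the boundary, or directly on $[0,1)$ with the interval version of the cylinder condition.)

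The main obstacle I anticipate is making the passage "close to $\Sigma$ in $d_{[0,1)}$ $\Rightarrow$ long common symbolic prefix with a point of $\widetilde\Sigma$" genuinely uniform, i.e. producing a single $\epsilon_\Sigma$ that works. This is where compactness of $\Sigma$ (equivalently of $\widetilde\Sigma$) and the fact that $\chi_D$ restricted to $\widetilde\Sigma$ is a continuous map from a compact space — hence uniformly continuous with a continuous, compactly-supported modulus — are essential: the $\ell$-cylinders covering $\widetilde\Sigma$ have images that are finitely many intervals, and their pairwise distances to the complement of their union within a neighborhood of $\Sigma$ are bounded below. An alternative, perhaps cleaner, route avoids coding entirely: since $\Sigma$ is an SFT one can realize it as the maximal invariant set of $D$ inside a finite union of closed intervals $\bigcup_i J_i$ with $D(J_i)$ a union of $J_j$'s (the geometric Markov description), take $\epsilon_\Sigma$ smaller than the gaps between the $J_i$'s and the complementary intervals, and then argue that any $D$-invariant $K$ within $\epsilon_\Sigma$ of $\Sigma$ has every point and every forward iterate inside $\bigcup_i J_i$, hence $K$ is contained in that maximal invariant set $\Sigma$. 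I would present the coding-based argument as the main proof and remark on the geometric one.
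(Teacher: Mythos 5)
Your coding-based argument has a gap at its central step, the implication ``$x \in \mathcal{U}_{\epsilon_\Sigma}^{[0,1)}(\Sigma)$ forces the initial $\ell$ coordinates of $h^{-1}(x)$ to form a legal word.'' This fails for every $\epsilon_\Sigma > 0$, and the obstruction is not the countable two-valuedness you flag but the fact that $\chi_D$ has a genuinely discontinuous inverse. Concretely, take $\Sigma$ to be the golden-mean SFT with forbidden block $[11]$; then $0 = \chi_D(0^\infty) \in \Sigma$. For each $n$ the point $x_n := 1 - 2^{-n}$ has $d_{[0,1)}(x_n, 0) = 2^{-n}$, so $x_n \in \mathcal{U}_{\epsilon_\Sigma}^{[0,1)}(\Sigma)$ once $n$ is large, yet for $n\ge 3$ \emph{both} codings of $x_n$, namely $1^n 0^\infty$ and $1^{n-1}01^\infty$, begin with the forbidden block $[11]$. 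A point near $\Sigma$ in $d_{[0,1)}$ can approach a boundary point of $\Sigma$ (here $0\equiv 1$) from the ``wrong side,'' where the symbolic expansion jumps; H\"older continuity of $\chi_D$ gives no control on $\chi_D^{-1}$. Consequently, your single-iterate contradiction --- shift the forbidden block to the front of $\sigma^{k-1}(\underline{x})$ and invoke the legal-prefix claim --- does not go through.

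What saves the lemma is that one must follow the whole orbit, not a single iterate. In the example above the forward orbit $x_n \mapsto 1-2^{-(n-1)}\mapsto\cdots\mapsto 3/4 \mapsto 1/2$ is driven out (the point $3/4$ is at distance $1/12$ from $\Sigma$), so $x_n$ cannot belong to any invariant $K\subset \mathcal{U}_{\epsilon_\Sigma}^{[0,1)}(\Sigma)$ once $\epsilon_\Sigma$ is small. The paper's proof formalizes exactly this: it works directly on $[0,1)$ with the Markov partition, forms the \emph{one-sided} $\epsilon$-thickening $\mathcal{U}(\Sigma)$ of the allowed cylinder intervals (collars added only on sides bordering a forbidden word), and proves $\bigcap_{n\ge 0} D^{-n}(\mathcal{U}(\Sigma)) = \Sigma$ by following the nested preimages of the added collars, which contract to the boundary point and thereby exclude any nearby non-$\Sigma$ point whose \emph{entire} forward orbit stays in $\mathcal{U}(\Sigma)$. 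Your ``alternative geometric route'' is closer in spirit to the paper but has the same flaw as written: $\Sigma$ can touch an endpoint of some $J_i$ (and $0\equiv 1$ always lies on a partition boundary), so $\mathcal{U}_{\epsilon_\Sigma}^{[0,1)}(\Sigma)$ leaks outside $\bigcup_i J_i$ for every $\epsilon_\Sigma > 0$ and ``$\epsilon_\Sigma$ smaller than the gaps'' yields nothing; one must still argue along orbits that the leaked points escape.
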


\begin{remark*}
If we consider the subshift of finite type $\Sigma$ as a subset of $\Sigma_d$ then it is rather straightforward to show the maximality property. Indeed, let $w_k = [w_1^k \ldots w_{\ell}^k]$, $w_i^k \in \{0, 1, \ldots , d-1\}$, $1\leq i \leq \ell$, be a forbidden block and assume that there are in total $n$ forbidden blocks. We also assume that the length of all forbidden blocks is the same and equal to $\ell$. Consider $C_k:= \{\underline{x} \in \Sigma_d: \ (x_1, \ldots, x_{\ell})=(w_1^k, \ldots , w_{\ell}^k) \} = C_{w_1^k} \cap \sigma^{-1}(C_{w_2^k}) \cap \ldots \cap \sigma^{-\ell+1}(C_{w_{\ell}^k}) $, $1\leq k \leq n$. Then, each $C_k$ is clopen since the cylinders are clopen and $\sigma$ continuous. Furthermore,
	$$\Sigma = \bigcap_{m=0}^{\infty} \bigcap_{k=1}^{n} \sigma^{-m}(\Sigma_d \setminus C_k)$$
Thus for $\mathcal{U}:= \bigcap_{k=1}^{n}(\Sigma_d \setminus C_k)$ we get the result.
\end{remark*}

\begin{proof}
Let $\Sigma\subset [0,1)$ be a subset of finite type for $f\in \mathcal{E}^{\alpha}$ corresponding to the Markov Partition of the interval $\mathcal{P}=\{P_1,\ldots , P_d\}$, related to $f$. Consider the derived closed intervals of $[0,1)$, $P_{\ell_1 \ell_2 \cdots \ell_r}:= P_{\ell_1}\cap f^{-1}(P_{\ell_2}) \cap \ldots \cap f^{-r+1}(P_{\ell_r})$, $\ell_i \in \{1,2, \ldots , d\}$, $\forall i\in \{1,2, \ldots , r\}$.
	
	Let $R_0\in \mathbb{N}$ denote the number of all forbidden words. Then we may assume that all the forbidden words have the same length, i.e. there exists $r_0\in \mathbb{N}$ such that $w_i=w_{i_1}\cdots w_{i_{r_0}}$, for all $i\in \{ 1,2, \ldots , R_0 \}$. 
	
	Now let $\epsilon_0 \ll \min \{ \text{diam}(P_{\ell_1 \ell_2 \cdots \ell_{r_0}}): \ \ell_1 \ell_2 \cdots \ell_{r_0} \text{  is not a forbidden word}\}$ and $\underline{\ell}= \ell_1 \ell_2 \cdots \ell_n$ be an allowed word. We refer to the previous, with respect to the lexicographic order, word as the left from $\underline{\ell} = \ell_1 \ell_2 \cdots \ell_n$ word. Respectively, we define the right word. For the word $0^{r_0}$, the left word is $d^{r_0}$ and for the case $d^{r_0}$ the right word is $0^{r_0}$. We write any fixed $P_{\underline{\ell}}$ in the form  $[a_{\underline{\ell}}, b_{\underline{\ell}}]$ and we set,
$$\mathcal{W}(P):= 
\begin{cases} 
	[a_{\underline{\ell}}, b_{\underline{\ell}}] \ , \quad &\text{left and right word from } \underline{\ell} \text{ are both allowed} \\
	(a_{\underline{\ell}} - \epsilon , b_{\underline{\ell}}] \ , \quad &\text{only the right word from } \underline{\ell} \text{ is allowed} \\
	[a_{\underline{\ell}} , b_{\underline{\ell}}+ \epsilon) \ , \quad &\text{only the left word from } \underline{\ell} \text{ is allowed}  \\ 
	(a_{\underline{\ell}}-\epsilon , b_{\underline{\ell}} + \epsilon) \ , \quad &\text{left and right word from } \underline{\ell} \text{ are both not allowed}
	\end{cases}$$	
	
	Now set,
	$$\mathcal{U}(\Sigma) := \bigcup\limits_{\substack{\ell_1 \ell_2 \cdots \ell_n \\ \text{is an} \\ \text{allowed word}}} \mathcal{W}(P_{\ell_1 \ell_2 \cdots \ell_n})$$
Then $\mathcal{U}(\Sigma)$ is clearly open and $\cap_{n\geq 0}f^{-n}(\mathcal{U}(\Sigma)) \supset \Sigma$. In fact, it has the requested property,
$$\bigcap_{n\geq 0} f^{-n}(\mathcal{U}(\Sigma)) = \Sigma \ .$$ 
Indeed, let us assume that there exists an $x \in \cap_{n\geq 0}f^{-n}(\mathcal{U}(\Sigma)) \setminus \Sigma$ or equivalently, $x \in f^{-n}(\mathcal{U}(\Sigma)) \setminus \Sigma$, for all $n\geq 0$. This means that, for $n=0$, there exists a boundary point of some $P_{\ell_1 \ldots \ell_{r_0}}$, $s_0$, where either the left or the right word from $\underline{\ell}=\ell_1 \ldots \ell_{r_0}$ is not allowed, so that $x\in (s_0- \epsilon_0 , s_0 + \epsilon_0) \setminus \Sigma$, where $\epsilon_0=\epsilon$. Also, since boundary points go to boundary points, there exists another boundary point, $t_0$, of some  $P_{\ell_1'\ldots \ell_{r_0}'}$, where $s_0\in f^{-1}(t_0)$, so that $x\in f^{-1}((t_0-\epsilon_0 , t_0 + \epsilon_0)) \setminus \Sigma$. In particular  $(s_0- \epsilon_0 , s_0 + \epsilon_0) \cap f^{-1}((t_0-\epsilon_0 , t_0 + \epsilon_0)) \setminus \Sigma \neq \emptyset$. Since $s_0\in f^{-1}(t_0)$ and $\epsilon_0=\epsilon$ is much smaller that the diameter of the partition $\mathcal{P}$, this intersection is contained in $(s_0-\epsilon_1 , s_0 + \epsilon_1)$, $\epsilon_1 < \epsilon_0$, since $f^{-1}$ contracts intervals. In the same manner we can find sequences $(t_n)_{n\geq 1}$ and $(\epsilon_n)_{n\geq 0}$ so that $\epsilon_n \searrow 0$ and
\begin{align*}
x\in \big((s_0- \epsilon_0 , s_0 + \epsilon_0) \cap f^{-1}((t_0-\epsilon_0 , t_0 + \epsilon_0)) \cap \ldots \cap f^{-n}&((t_n- \epsilon_0, t_n + \epsilon_0))\big)\setminus \Sigma
\\&\subset 
(s_0 -\epsilon_{n+1}, s_0 + \epsilon_{n+1})
\end{align*}
But $(s_0 -\epsilon_{n}, s_0 + \epsilon_{n}) \to \{s_0\}$, when $n\to \infty$. Thus either the intersection is empty or $x=s_0$ both of which are contradictions. 
\end{proof}

\begin{corollary} \label{Cor.  V_SFT clopen}
Let $\Sigma\in \mathcal{K}_D$ be a subshift of finite type and define $$\mathcal{V}_{\Sigma}:= \{ S\in \mathcal{K}_D : \ S \subseteq \Sigma \} \ .$$
Then $\mathcal{V}_{\Sigma}$ is a clopen subset of $(\mathcal{K}_D, d_H)$.
\end{corollary}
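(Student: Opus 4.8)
The plan is to show that $\mathcal{V}_\Sigma$ is both closed and open in $(\mathcal{K}_D, d_H)$, using the maximality property of $\Sigma$ established in Lemma \ref{Lemma SFT maximal} together with standard facts about the Hausdorff metric. The openness is exactly where the maximality lemma does the work, while closedness is a soft consequence of the topology of $\mathcal{K}_D$.

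**Openness.** Let $S_0 \in \mathcal{V}_\Sigma$, so $S_0 \subseteq \Sigma$. I claim the ball $B_{d_H}(S_0, \epsilon_\Sigma)$ is contained in $\mathcal{V}_\Sigma$, where $\epsilon_\Sigma > 0$ is the constant from Lemma \ref{Lemma SFT maximal}. Indeed, if $K \in \mathcal{K}_D$ with $d_H(K, S_0) < \epsilon_\Sigma$, then by definition of the Hausdorff metric every point of $K$ lies within $\epsilon_\Sigma$ of some point of $S_0 \subseteq \Sigma$, i.e. $K \subseteq \mathcal{U}_{\epsilon_\Sigma}^{[0,1)}(\Sigma)$. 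Since $K$ is a $D$-invariant set, Lemma \ref{Lemma SFT maximal} gives $K \subseteq \Sigma$, hence $K \in \mathcal{V}_\Sigma$. Thus $\mathcal{V}_\Sigma$ is open.

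**Closedness.** Suppose $K_n \in \mathcal{V}_\Sigma$ and $K_n \to K$ in $(\mathcal{K}_D, d_H)$; I must show $K \subseteq \Sigma$. This is immediate: $d_H$-convergence implies that for every $\delta > 0$, eventually $K \subseteq \mathcal{U}_\delta^{[0,1)}(K_n) \subseteq \mathcal{U}_\delta^{[0,1)}(\Sigma)$, so $K \subseteq \bigcap_{\delta > 0}\mathcal{U}_\delta^{[0,1)}(\Sigma) = \overline{\Sigma} = \Sigma$ (the set $\Sigma$ being compact, hence closed). Alternatively one invokes the elementary fact that $\{S \in \mathcal{K}_D : S \subseteq \Sigma\}$ is always closed in the Hausdorff metric whenever $\Sigma$ is closed, independently of any invariance. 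Either way, $\mathcal{V}_\Sigma$ is closed, and combined with the previous paragraph, $\mathcal{V}_\Sigma$ is clopen.

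**The main point** is really just the openness, and there the only subtlety is checking that $d_H(K,S_0) < \epsilon_\Sigma$ with $S_0 \subseteq \Sigma$ forces $K \subseteq \mathcal{U}_{\epsilon_\Sigma}^{[0,1)}(\Sigma)$; this follows because the Hausdorff distance controls $\sup_{x \in K}d(x, S_0)$ and monotonicity of $\mathcal{U}_{\epsilon}^{[0,1)}(\cdot)$ in its set argument then gives the inclusion into the $\epsilon_\Sigma$-neighbourhood of $\Sigma$. I expect no genuine obstacle here — Lemma \ref{Lemma SFT maximal} has already absorbed all the dynamical content, so the corollary is a short topological deduction. (One should take care that $\epsilon_\Sigma$ from the lemma is uniform, not depending on the particular invariant set inside the neighbourhood, which it is as stated.)
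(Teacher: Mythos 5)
Your proof is correct. The openness part coincides with the paper's argument: both take $S_0 \in \mathcal{V}_\Sigma$ and use Lemma~\ref{Lemma  SFT maximal} to show the $\epsilon_\Sigma$-ball about $S_0$ lies in $\mathcal{V}_\Sigma$, via the inclusion $K \subseteq \mathcal{U}_{\epsilon_\Sigma}^{[0,1)}(S_0) \subseteq \mathcal{U}_{\epsilon_\Sigma}^{[0,1)}(\Sigma)$. For closedness you diverge from the paper in a minor but worthwhile way: the paper reruns the maximality lemma a second time (taking $L$ in the closure, finding $S \in \mathcal{V}_\Sigma$ with $d_H(S,L) < \epsilon_\Sigma$, and concluding $L \subseteq \Sigma$), whereas you observe that $\{S \in \mathcal{K}_D : S \subseteq \Sigma\}$ is closed in the Hausdorff metric for purely topological reasons, since $K \subseteq \bigcap_{\delta>0}\mathcal{U}_\delta^{[0,1)}(\Sigma) = \Sigma$ whenever $\Sigma$ is compact, with no dynamical input at all. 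Your version makes explicit something the paper leaves implicit — that the maximality lemma is genuinely needed only for openness — which is a small gain in clarity; the paper's version has the minor economy of a single argument invoked twice. Both are complete and correct.
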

\begin{proof}
If $L \in \text{cl}_{d_H}(\mathcal{V}_{\Sigma})$, then there exist $S\in \mathcal{V}_{\Sigma}$ such that $d_{H}(S, L) < \epsilon_{\Sigma}$, where $\epsilon_{\Sigma}$ as in Lemma \ref{Lemma SFT maximal}. In particular, $L \subset \mathcal{U}_{\epsilon_{\Sigma}}^{[0,1)}(S) \subset \mathcal{U}_{\epsilon_{\Sigma}}^{[0,1)}(\Sigma)$. Follows that $L\subset \Sigma$ and thus $L \in \mathcal{V}_{\Sigma}$. Therefore $\mathcal{V}_{\Sigma}$ is a closed set.

The same argument shows that if $S\in \mathcal{V}_{\Sigma}$, then for any $L\in \mathcal{K}_D$ such that $d_{H}(S, L) < \epsilon_{\Sigma}$ we have that $L\in \mathcal{V}_{\Sigma}$. Therefore $\mathcal{V}_{\Sigma}$ is an open set.
\end{proof}

\begin{lemma} \label{Lemma  K cap S empty then local diconnentedness}
Let $K_1$, $K_2 \in \mathcal{K}_D$ such that $K_1 \cap K_2= \varnothing$. Then there exist subshifts of finite type, $\Sigma_1 \supset K_1$, $\Sigma_2\supset K_2$, such that $\mathcal{V}_{\Sigma_1} \cap \mathcal{V}_{\Sigma_2} = \varnothing$.
\end{lemma}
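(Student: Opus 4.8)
The plan is to separate $K_1$ and $K_2$ by a positive distance and then, for each of them, to interpose a subshift of finite type between the set and a small neighbourhood of it. Since $K_1$ and $K_2$ are disjoint nonempty compact subsets of the compact metric space $([0,1),d_{[0,1)})$, the number $3\delta:=d_{[0,1)}(K_1,K_2)$ is strictly positive, and the two $\delta$-neighbourhoods $\mathcal{U}_{\delta}^{[0,1)}(K_1)$ and $\mathcal{U}_{\delta}^{[0,1)}(K_2)$ are disjoint (a common point $x$ would yield $y_i\in K_i$ with $d_{[0,1)}(y_1,y_2)\le d_{[0,1)}(y_1,x)+d_{[0,1)}(x,y_2)<2\delta$, contradicting $d_{[0,1)}(y_1,y_2)\ge 3\delta$). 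So it is enough to establish the following claim: \emph{for every $K\in\mathcal{K}_D$ and every $\delta>0$ there is a subshift of finite type $\Sigma$ with $K\subseteq\Sigma\subseteq\mathcal{U}_{\delta}^{[0,1)}(K)$.} Granting the claim, choose $\Sigma_i\supseteq K_i$ inside $\mathcal{U}_{\delta}^{[0,1)}(K_i)$ for $i=1,2$; then $\Sigma_1\cap\Sigma_2\subseteq\mathcal{U}_{\delta}^{[0,1)}(K_1)\cap\mathcal{U}_{\delta}^{[0,1)}(K_2)=\varnothing$, and since $\mathcal{K}_D$ consists of nonempty compact invariant sets (the Hausdorff metric being defined on nonempty compacta), no $S\in\mathcal{K}_D$ can satisfy both $S\subseteq\Sigma_1$ and $S\subseteq\Sigma_2$, i.e. $\mathcal{V}_{\Sigma_1}\cap\mathcal{V}_{\Sigma_2}=\varnothing$.

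For the claim, let $h$ denote the coding map. If $h^{-1}(K)$ is already of finite type, take $\Sigma=K$. Otherwise Lemma~\ref{Lemma  density of SFT from above} provides a decreasing chain $SFT_{n+1}\subseteq SFT_n$ of subshifts of finite type in the coding space with $h^{-1}(K)=\bigcap_{n} SFT_n$, and hence a decreasing chain of finite-type sets $S_n^\ast:=h(SFT_n)\subseteq[0,1)$, each compact and $D$-invariant (because $h$ is continuous and $h\circ\sigma=D\circ h$), with $K\subseteq S_n^\ast$ for every $n$ and $\bigcap_{n}S_n^\ast=K$. It then suffices to find $N$ with $S_N^\ast\subseteq\mathcal{U}_{\delta}^{[0,1)}(K)$, for then $\Sigma:=S_N^\ast$ does the job.

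This final step, which I regard as the only genuine point in the argument, is a short compactness argument. If no such $N$ existed, we could pick for each $n$ a point $x_n\in S_n^\ast$ with $d_{[0,1)}(x_n,K)\ge\delta$; passing to a convergent subsequence $x_{n_k}\to x$ in the compact space $[0,1)$ and using that for each fixed $n$ the tail $\{x_{n_k}:n_k\ge n\}$ lies in the closed set $S_n^\ast$, we would obtain $x\in\bigcap_{n}S_n^\ast=K$ together with $d_{[0,1)}(x,K)=\lim_k d_{[0,1)}(x_{n_k},K)\ge\delta$, a contradiction. The only care needed elsewhere is the routine identification of $h(SFT_n)$ with ``a subshift of finite type inside $[0,1)$'', which is exactly the identification already in force in Lemma~\ref{Lemma  density of SFT from above}, and the remark that $\varnothing\notin\mathcal{K}_D$, which is what upgrades $\Sigma_1\cap\Sigma_2=\varnothing$ to $\mathcal{V}_{\Sigma_1}\cap\mathcal{V}_{\Sigma_2}=\varnothing$.
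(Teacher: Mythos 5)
Your proof is correct and takes essentially the same route as the paper's: both deduce disjoint subshifts of finite type $\Sigma_1 \supset K_1$, $\Sigma_2 \supset K_2$ from Lemma~\ref{Lemma  density of SFT from above}. The paper's version is terser and simply asserts the existence of such disjoint $\Sigma_i$, whereas you supply the compactness argument showing that the nested chain $S_n^\ast$ eventually fits inside the $\delta$-neighbourhood of $K$ — the step the paper leaves implicit — together with the remark that $\varnothing\notin\mathcal{K}_D$, both of which are worth making explicit.
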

\begin{proof}
If $K_1$, $K_2 \in \mathcal{K}_D$ such that $K_1 \cap K_2= \varnothing$, then by Lemma \ref{Lemma  density of SFT from above} there exist subshifts of finite type $\Sigma_1 \supset K_1$, $\Sigma_2 \supset K_2$ such that $\Sigma_1 \cap \Sigma_2 = \varnothing$. Follows that \mbox{$\mathcal{V}_{\Sigma_1} \cap \mathcal{V}_{\Sigma_2} = \varnothing$.}
\end{proof}

\begin{theorem} (Disconnectedness of $\mathcal{K}_{D}$) \label{Pr. structure of set of D-invariant sets} 
The space $(\mathcal{K}_D, d_H)$ is compact and totally disconnected. In particular it is of first category.
\end{theorem}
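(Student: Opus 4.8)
The plan is to establish the three assertions in turn --- compactness, total disconnectedness, and ``first category,'' the last of which I would read as a statement about $\mathcal{K}_D$ as a subset of the hyperspace of all compact subsets of the circle.

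\emph{Compactness} is routine. I would view $\mathcal{K}_D$ inside the hyperspace $(2^{[0,1)},d_H)$ of nonempty compact subsets of the circle, which is itself a compact metric space, and check that $\mathcal{K}_D$ is closed there. Since $D$ is Lipschitz for $d_{[0,1)}$, the induced map $S\mapsto D(S)$ on $2^{[0,1)}$ is $d_H$-continuous; hence if $K_n\in\mathcal{K}_D$ and $K_n\xrightarrow{d_H}K$, then $D(K_n)\to D(K)$, and since $D(K_n)\subseteq K_n$ and $d_H$-limits respect inclusion of compact sets, $D(K)\subseteq K$, so $K\in\mathcal{K}_D$. (If invariance is meant as $D(K)=K$, the reverse inclusion follows similarly using surjectivity of $D$ and compactness of $[0,1)$.)

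\emph{Total disconnectedness} is where the preceding lemmas pay off. The two inputs are Corollary~\ref{Cor.  V_SFT clopen} (for an SFT $\Sigma\in\mathcal{K}_D$ the set $\mathcal{V}_\Sigma=\{S\in\mathcal{K}_D:S\subseteq\Sigma\}$ is clopen) and Lemma~\ref{Lemma  density of SFT from above} (every $K\in\mathcal{K}_D$ is a decreasing intersection $K=\bigcap_n\Sigma_n$ of SFTs in $\mathcal{K}_D$; trivially so when $K$ is itself an SFT). Given distinct $K_1,K_2\in\mathcal{K}_D$, I would take a point $x$ in the nonempty symmetric difference, say $x\in K_2\setminus K_1$, write $K_1=\bigcap_n\Sigma_n$ as above, and pick $N$ with $x\notin\Sigma_N$ (possible since $x\notin K_1$ and the $\Sigma_n$ are nested). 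Then $\mathcal{V}_{\Sigma_N}$ is clopen, contains $K_1$ but not $K_2$, so $\{\mathcal{V}_{\Sigma_N},\,\mathcal{K}_D\setminus\mathcal{V}_{\Sigma_N}\}$ is a clopen partition of $\mathcal{K}_D$ separating $K_1$ and $K_2$; hence no connected subset contains two distinct points, i.e.\ $\mathcal{K}_D$ is totally disconnected. (Alternatively, the disjoint case is exactly Lemma~\ref{Lemma  K cap S empty then local diconnentedness}, and only the overlapping case needs the argument just given.)

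\emph{First category} I would prove inside $2^{[0,1)}$: this hyperspace is connected, locally connected, and has no isolated points (standard properties of the hyperspace of a Peano continuum such as the circle), so any nonempty open subset of it contains a nondegenerate connected set and therefore cannot be totally disconnected; together with the previous step this forces $\mathcal{K}_D$ to have empty interior in $2^{[0,1)}$, and being closed there it is nowhere dense, hence of first category. As for the main obstacle, there really is no deep one here: once the pieces are in place the proof is a matter of assembly, and the only genuinely nontrivial ingredient --- without which the clopen sets $\mathcal{V}_\Sigma$ would fail to separate all pairs of points --- is that \emph{every} compact $D$-invariant set, not just the well-behaved ones, is a nested intersection of subshifts of finite type; that is precisely Lemma~\ref{Lemma  density of SFT from above}, which in turn rests on the maximality statement (Lemma~\ref{Lemma  SFT maximal}) and on semicontinuity of entropy.
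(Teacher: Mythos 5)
Your proof is correct, and the middle step takes a genuinely cleaner route than the paper. For compactness you and the paper argue the same way (closedness in the hyperspace). For total disconnectedness the paper first shows, via Lemma~\ref{Lemma  density of SFT from above} and Corollary~\ref{Cor.  V_SFT clopen}, that every element of a connected component $\mathcal{C}$ is maximal in $\mathcal{C}$, then asserts that two distinct maximal elements must be disjoint and invokes Lemma~\ref{Lemma  K cap S empty then local diconnentedness}; you instead separate an arbitrary pair $K_1\neq K_2$ directly, by picking $x$ in their symmetric difference and choosing one $\Sigma_N$ from the nested SFT decomposition of the set that misses $x$, so $\mathcal{V}_{\Sigma_N}$ is a clopen set containing one and not the other. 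This buys you two things: you never need the disjoint case (Lemma~\ref{Lemma  K cap S empty then local diconnentedness} becomes superfluous), and you sidestep the step ``maximal and distinct implies disjoint,'' which is not automatic for an antichain and which your argument makes unnecessary. For the ``first category'' clause the paper offers no argument at all, whereas you supply the intended one: $\mathcal{K}_D$ is closed and totally disconnected inside the hyperspace of the circle, which (being connected, locally connected, and without isolated points, in fact a Hilbert cube) has no nonempty totally disconnected open subsets, so $\mathcal{K}_D$ is nowhere dense there and hence of first category in the hyperspace. One small remark: in the compactness step the $D(K)=K$ case is even simpler than you suggest, since $D(K_n)=K_n\to K$ and $D(K_n)\to D(K)$ give $D(K)=K$ by uniqueness of Hausdorff limits, with no appeal to surjectivity needed.
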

\begin{proof}
Compactness can be easily derived by observing that $\mathcal{K}_D$ is a $d_H$-closed subset of $\mathcal{M}$.

Let $\mathcal{C}$ be the connected component that contains $K\in  \mathcal{K}_D$. Then $K$ is a maximal element of $\mathcal{C}$. Indeed, if $S \in \mathcal{C}$ such that $K\subsetneqq S$, by Lemma \ref{Lemma  density of SFT from above}, there exists a subshift of finite type, $\Sigma$, such that $K\subsetneqq \Sigma \subsetneqq S$. We then have that $K\in \mathcal{V}_{\Sigma}$ and $S\not\in \mathcal{V}_{\Sigma}$. Since $\mathcal{V}_{\Sigma}$ is clopen (Corollary \ref{Cor.  V_SFT clopen}) and $\mathcal{C}$ is the connected component that contains $K$, we have that $\mathcal{V}_{\Sigma}\supset \mathcal{C}$ (contradiction since $S\not\in \mathcal{V}_{\Sigma}$ and $S \in \mathcal{C}$).

Finally, let's assume that $\mathcal{C}$ contains at least two distinct elements, $K_1$ and $K_2$. Then, since both of them are maximal and distinct, $K_1\cap K_2 = \varnothing$. By Lemma \ref{Lemma  K cap S empty then local diconnentedness}, there is a subshift of finite type $\Sigma$ such that $K_1\in \mathcal{V}_{\Sigma}$ and $K_2 \not\in \mathcal{V}_{\Sigma}$. With the same argument as before we get a contradiction and that completes the proof.
\end{proof}

\begin{note}
All the results above can be similarly be stated and proven when instead of the doubling map, $D$, we consider any map $f\in \mathcal{E}^{\alpha}$.
\end{note}

\smallskip

\section{Global result for subsets of ``small" Hausdorff dimension}
The following proposition is crucial to show that there is a $\| \cdot \|_{C^{\alpha}}$-residual set in $\mathcal{E}^{\alpha}$ so that any $f$ in that set has no joint invariant compact invariant set, of "small" Hausdorff dimension. This dimensional restriction arise from Proposition \ref{Pr. dim + dim < 1}, which is essential in order to acquire a global result for all  subshifts of finite type with dimension less that $1/2$. The strong sense of denseness of subshifts of finite type from Lemma \ref{Lemma  density of SFT from above}, allows us to pass to all compact invariant sets with sufficiently small dimension.

\begin{proposition} \label{Pr. dimension result 1, less than 1/2} 
For $\| \cdot \|_{C^{\alpha}}$-generic $f\in \mathcal{E}^{\alpha}$ and for every $SFT$ with \hbox{$d(SFT)<\frac{1}{2}$,}
$$f(SFT) \cap SFT = \varnothing \ .$$
\end{proposition}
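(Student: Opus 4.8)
The plan is to realize the desired conclusion as a countable intersection of open dense sets in $\mathcal{E}^{\alpha}$. Since there are only countably many subshifts of finite type (each being described by a finite list of forbidden words over a finite alphabet, ranging over all possible alphabet sizes), it suffices to show that for each fixed $SFT$ with $d(SFT) < \tfrac12$, the set
\[
\mathcal{G}_{SFT} := \{\, g \in \mathcal{E}^{\alpha} : g(SFT^{*}) \cap SFT^{*} = \varnothing \,\}
\]
is open and dense in $(\mathcal{E}^{\alpha}, \|\cdot\|_{C^{\alpha}})$, where $SFT^{*}$ denotes the realization of the combinatorial object as a compact subset of $[0,1)$ (via the coding map attached to $g$; one must be a little careful here, and I return to this point below). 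Openness should follow from compactness: $SFT^{*}$ is compact, the map $(g,x)\mapsto g(x)$ is jointly continuous, so $g(SFT^{*})$ and $SFT^{*}$ are disjoint compact sets and hence separated by a positive distance, and a small $C^{\alpha}$-perturbation of $g$ moves $g(SFT^{*})$ by less than that distance (using Lemma \ref{Lemma closed map lemma} to know $g(SFT^{*})$ is closed, and uniform continuity of the evaluation in the $C^{0}$, hence $C^{\alpha}$, topology).

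The heart of the matter is density. Fix $g_{0}\in\mathcal{E}^{\alpha}$ and $\varepsilon>0$; I want $g$ with $\|g-g_{0}\|_{C^{\alpha}}<\varepsilon$ and $g(SFT^{*})\cap SFT^{*}=\varnothing$. The key observation is that $g(SFT^{*})\cap SFT^{*}\neq\varnothing$ exactly means there is $x\in SFT^{*}$ with $g(x)\in SFT^{*}$, i.e. $g(x)-y=0$ for some $y\in SFT^{*}$, i.e. $0\in g(SFT^{*})-SFT^{*}$. Now $SFT^{*}$ has box dimension $d(SFT)<\tfrac12$ by hypothesis, and $g_{0}$ is Lipschitz (it is $C^{1}$ with bounded derivative on each fundamental interval), so by Lemma \ref{Lemma dim. invariant under Lip.} $g_{0}(SFT^{*})$ also has box dimension $<\tfrac12$ — here I should actually check that the upper box dimension, not just Hausdorff dimension, is controlled under the Lipschitz map, which is immediate from the covering definition of box dimension. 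Hence $\dim_{box}\big(g_{0}(SFT^{*})\big)+\dim_{box}(SFT^{*})<1$, and Proposition \ref{Pr. dim + dim < 1} gives that $g_{0}(SFT^{*})-SFT^{*}$ has Lebesgue measure zero. Therefore its complement is dense, so there exists an arbitrarily small real number $t$ with $t\notin g_{0}(SFT^{*})-SFT^{*}$; equivalently $\big(g_{0}(SFT^{*})+t\big)\cap SFT^{*}=\varnothing$.

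It remains to convert the translate $g_{0}+t$ into a legitimate element of $\mathcal{E}^{\alpha}$ that is $\varepsilon$-close to $g_{0}$ and still avoids $SFT^{*}$. The naive choice $g:=g_{0}+t \pmod 1$ is again an expanding $C^{\alpha}$ Markov map with the same branch structure (translation by a constant does not change $g'$, and it keeps the Markov property with respect to a translated partition), and $\|g-g_{0}\|_{C^{\alpha}}=|t|<\varepsilon$ for $t$ small. The subtlety I flagged: $SFT^{*}$ must be interpreted consistently. The clean way is to fix the combinatorial datum and let $SFT^{*}_{g}=\chi_{g}(SFT)$ vary with $g$; but then perturbing $g$ changes the target set too, which breaks the openness/density bookkeeping. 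I expect the right fix is to work with a fixed reference map — say, linearize via the conjugacy $h$ from the Preliminaries so that $SFT^{*}$ is a genuinely fixed compact subset of $[0,1)$ independent of $g$ — and phrase the whole statement as: for generic $g$, $g(K)\cap K=\varnothing$ for every such fixed $K$. With $SFT^{*}$ a fixed set, the density argument above goes through verbatim: $\{g: (g(SFT^{*})\cap SFT^{*}=\varnothing\}$ is open (compactness) and dense (perturb by a small translation $t$ avoiding the null set $g_{0}(SFT^{*})-SFT^{*}$), and intersecting over the countably many such $SFT$ yields the residual set. The main obstacle, then, is not any single estimate but getting this "moving target" issue organized correctly — making precise in what sense $SFT$ sits inside $[0,1)$ so that both openness and the Palis–Takens translation trick apply simultaneously; once that is pinned down, the rest is the Baire-category assembly plus the dimension inequality feeding Proposition \ref{Pr. dim + dim < 1}.
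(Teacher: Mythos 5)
Your proof is correct and follows essentially the same route as the paper: enumerate the countably many SFTs with dimension $<\tfrac12$, show each $\mathcal{G}_{SFT}$ is open via compactness (using Lemma~\ref{Lemma closed map lemma}) and dense via the Palis--Takens argument (Lemma~\ref{Lemma dim. invariant under Lip.} plus Proposition~\ref{Pr. dim + dim < 1}) combined with the constant-translation perturbation $g\mapsto g+t\pmod 1$, then apply Baire. The two issues you flag are real but resolve exactly as you suspect: the paper implicitly treats the $SFT$ as a fixed $D$-invariant compact subset of $[0,1)$ (made explicit in Theorem~\ref{Th. dimension result 1'}, which ranges over $K\in\mathcal{K}_D$), and Proposition~\ref{Pr. dim + dim < 1} does indeed require upper box dimension rather than Hausdorff dimension, which, as you note, is also nonincreasing under Lipschitz maps, so the bound still holds.
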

\begin{proof}
Let $g\in \mathcal{E}^{\alpha}$ and $SFT$ such that $g(SFT)\cap SFT \neq \varnothing$.
Since $g\in \mathcal{E}^{\alpha}$, $\alpha>1$, then $g$ is Lipschitz continuous. Thus, by Lemma \ref{Lemma dim. invariant under Lip.}, $\dim_H (g(SFT)) \leq \dim_H (SFT) < \frac{1}{2}$.
By Proposition \ref{Pr. dim + dim < 1} we have that $\lambda \big(SFT - g(SFT)\big)=0$. Since $SFT - g(SFT) = \{ t\in [0,1]: \ SFT \cap (g(SFT)+t) \neq \emptyset \}$, it follows that there exist arbitrary small $\varepsilon >0$  such that $\big(g(SFT)+\varepsilon\big) \cap SFT = \varnothing$. 

Thus, for arbitrarily small $\varepsilon >0$ as above, by defining $g_{\varepsilon}=g+ \varepsilon$ we have that $g_{\varepsilon}\in \mathcal{E}^{\alpha}$, $\|g_{\varepsilon} - g\|_{C^{\alpha}} = \|g_{\varepsilon} - g\|_{\infty} = \varepsilon$ and $g_{\varepsilon}(SFT) \cap SFT = \varnothing$.

We have that the set  $\{SFT : \ \dim_H(SFT)< \frac{1}{2} \}$ is countable. Let $\{SFT_n\}_{n\in \mathbb{N}}$ be an enumeration of this set and define $\mathcal{G}_n=\{f\in \mathcal{E}^{\alpha} : \ f(SFT_n) \cap SFT_n = \varnothing \}$. Then from what discussed above $\mathcal{G}_n$ is $\| \cdot \|_{C^{\alpha}}$-dense in $\mathcal{E}^{\alpha}$, $\forall n\in \mathbb{N}$.

By Lemma \ref{Lemma closed map lemma}, if $f\in \mathcal{E}^{\alpha}$ then $f$ is a closed map. Hence, if $f\in \mathcal{G}_n$ then \mbox{$f(SFT_n) \cap SFT_n = \varnothing$} and since both sets are compact, $\exists \delta =\delta_{n,f}>0$ s.t. $dist(f(SFT_n), SFT_n) \geq \delta$. Thus the ball $B_{C^{\alpha}}(f, \frac{\delta}{2}) \subset \mathcal{G}_n$. Therefore $\mathcal{G}_n$ is $\| \cdot \|_{C^{\alpha}}$-open in $\mathcal{E}^{\alpha}$, $\forall n \in \mathbb{N}$. In particular, it is $\| \cdot \|_{C^0}$-open.

Baire's Theorem indicates that $\mathcal{G}:= \bigcap_{n\in \mathbb{N}} \mathcal{G}_n$ is $\| \cdot \|_{C^{\alpha}}$-dense in $\mathcal{E}^{\alpha}$.  
\end{proof}

\begin{remark*}
The method used in the proof of Proposition \ref{Pr. dimension result 1, less than 1/2} can be used to show the same result for any countable set of compact $f$-invariant subsets of ``small" Hausdorff dimension. Moreover, arguing in a similar way, a weaker result can be shown for any countable set, $\mathcal{A}$, of compact $D$-invariant subsets of arbitrary Hausdorff dimension.  Namely, for $\| \cdot \|_{C^{\alpha}}$-generic $f\in \mathcal{E}^{\alpha}$ and for every $K \in \mathcal{A} \subset \mathcal{K}_D$, $f(K) \neq K$.
\end{remark*}

\begin{theorem} \label{Th. dimension result 1'} 
For $\| \cdot \|_{C^{\alpha}}$-generic $f\in \mathcal{E}^{\alpha}$ and for every $K\in \mathcal{K}_D$ with \hbox{$d(K)<\frac{1}{2}$,}
	$$f(K) \cap K = \varnothing \ .$$
\end{theorem}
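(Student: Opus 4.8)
The plan is to deduce this from Proposition~\ref{Pr. dimension result 1, less than 1/2} together with the approximation-from-above of compact invariant sets by subshifts of finite type supplied by Corollary~\ref{Cor. d(SFT) approaches d(K)}; no new Baire-category argument is needed, and in particular the genericity has already been produced in Proposition~\ref{Pr. dimension result 1, less than 1/2}.

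First I would fix, once and for all, the $\|\cdot\|_{C^{\alpha}}$-residual set $\mathcal{G}=\bigcap_{n\in\mathbb{N}}\mathcal{G}_n\subset\mathcal{E}^{\alpha}$ constructed in the proof of Proposition~\ref{Pr. dimension result 1, less than 1/2}, for which, by construction, every $f\in\mathcal{G}$ satisfies $f(SFT)\cap SFT=\varnothing$ for every $D$-invariant subshift of finite type $SFT\subset[0,1)$ with $d(SFT)<\tfrac12$. This $\mathcal{G}$ will be exactly the residual set claimed in the theorem.

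Next, fix an arbitrary $f\in\mathcal{G}$ and an arbitrary $K\in\mathcal{K}_D$ with $d(K)<\tfrac12$. By Corollary~\ref{Cor. d(SFT) approaches d(K)} (whose proof rests on Lemma~\ref{Lemma  density of SFT from above} and the semicontinuity of entropy, Lemma~\ref{Lemma semicontinuity of entropy}), there is a $D$-invariant subshift of finite type $\Sigma$ with $K\subseteq\Sigma$ and $d(\Sigma)<\tfrac12$. Since $K\subseteq\Sigma$ trivially gives $f(K)\subseteq f(\Sigma)$, we obtain
$$ f(K)\cap K \ \subseteq\ f(\Sigma)\cap\Sigma \ =\ \varnothing, $$
the last equality because $f\in\mathcal{G}$ and $d(\Sigma)<\tfrac12$. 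As $f\in\mathcal{G}$ and $K$ were arbitrary, the theorem follows.

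The step carrying the actual content is the reduction to subshifts of finite type, i.e. Corollary~\ref{Cor. d(SFT) approaches d(K)}: it is what allows a single \emph{countable} family of ``bad'' sets (the $D$-invariant SFTs of dimension $<\tfrac12$) to control the uncountable family of all compact $D$-invariant sets of dimension $<\tfrac12$; everything else is monotonicity of $f(\cdot)$ under inclusion and bookkeeping. I would also remark that $f$-invariance of $K$ is never used — the statement concerns only the intersection $f(K)\cap K$ — and that, by the Note following Corollary~\ref{Cor. d(SFT) approaches d(K)}, the identical argument yields the analogous statement with $\mathcal{K}_D$ replaced by $\mathcal{K}_h$ for the coding $h$ of any fixed map in $\mathcal{E}^{\alpha}$.
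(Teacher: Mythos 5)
Your proposal is correct and is exactly the argument the paper intends: the text immediately preceding Proposition~\ref{Pr. dimension result 1, less than 1/2} announces that the denseness of SFTs from above (Lemma~\ref{Lemma  density of SFT from above}, packaged as Corollary~\ref{Cor. d(SFT) approaches d(K)}) is what ``allows us to pass to all compact invariant sets with sufficiently small dimension,'' and your derivation $f(K)\cap K\subseteq f(\Sigma)\cap\Sigma=\varnothing$ for an SFT $\Sigma\supseteq K$ with $d(\Sigma)<\tfrac12$, using the residual set $\mathcal{G}$ already built in Proposition~\ref{Pr. dimension result 1, less than 1/2}, is precisely that reduction. Your side remark that $f$-invariance of $K$ is never used is also accurate.
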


\begin{note}
All the results above can be similarly be stated and proven when instead of the doubling map, $D$, we consider any map $f\in \mathcal{E}^{\alpha}$.
\end{note}

\smallskip

\section{Local result for all subsets}

Here we show that locally, i.e. for an open neighbohood in $\mathcal{E}^{\alpha}$ such that its closure contains $D$, any other map has no joint invariant compact sets. It is a local result in the sense that the density of the open neighborhood, as in Proposition \ref{Pr. dimension result 1, less than 1/2}, in all $\mathcal{E}^{\alpha}$ is now replaced by the weaker condition that $D$  is in its closure. That being said, the restrictions in the dimension are lifted.

\begin{theorem} \label{Th. locally result} 
There is an $\| \cdot \|_{C^{\alpha}}$-open neighborhood in $\mathcal{E}^{\alpha}$, $G_{D}$, such that for any $f \in G_{D}$, $f(K)\neq K$, $\forall K\in \mathcal{K}_D$ and $D \in \text{cl}(G_D)$.
\end{theorem}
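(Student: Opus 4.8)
The plan is to take for $G_D$ a one-sided neighbourhood of $D$ built from the vertically translated maps $D_\varepsilon:=D+\varepsilon\pmod1$, $\varepsilon>0$ small. Passing to lifts $\tilde D(x)=2x$ and $\tilde D_\varepsilon(x)=2x+\varepsilon$, the basic open condition is ``$f$ has degree $2$ and some lift $\tilde f$ satisfies $\tilde f(x)>2x$ for all $x$'': this is $\|\cdot\|_{C^{0}}$-open (hence $\|\cdot\|_{C^{\alpha}}$-open) by compactness of $[0,1)$, it contains every $D_\varepsilon$, and $D$ lies on its boundary since $D_\varepsilon\to D$ as $\varepsilon\downarrow0$. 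Note that, unlike in the global result, one \emph{cannot} reduce the problem to making $f(\Sigma)\cap\Sigma=\varnothing$ for the countably many proper $SFT$'s $\Sigma\in\mathcal K_D$ (every $K\in\mathcal K_D$ is an intersection of such $\Sigma$ by Lemma~\ref{Lemma  density of SFT from above}): the translation trick of Proposition~\ref{Pr. dimension result 1, less than 1/2} achieves this only when $\dim\Sigma<\frac12$, and even $D_\varepsilon$ itself can have $D_\varepsilon(\Sigma)\cap\Sigma\neq\varnothing$ (this happens as soon as $\varepsilon\in\Sigma-\Sigma$, e.g. when $\Sigma-\Sigma$ contains a neighbourhood of $0$). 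So the argument must control joint invariant sets directly.

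The mechanism is the following. Let $K$ be non-trivial with $D(K)=K$ and $f(K)=K$, and let $(a,b)$ be a maximal gap of $[0,1)\setminus K$, of length $\ell>0$. Since $D(K)\subseteq K$ and $f(K)\subseteq K$ give $K\subseteq D^{-1}(K)$ and $K\subseteq f^{-1}(K)$, the complement is backward invariant under both maps, so for every $x\in K$ the whole $\langle D,f\rangle$-forward orbit of $x$ avoids $(a,b)$. When $f=D_\varepsilon$ this is impossible for irrational $\varepsilon$: a word of length $n$ in $\{D,D_\varepsilon\}$ sends $x$ to $2^nx+k\varepsilon\pmod1$ with $k$ running over $\{0,1,\dots,2^n-1\}$ as the word varies, and for $n$ large the set $\{k\varepsilon\bmod1:0\le k<2^n\}$ is a $\frac{\ell}{2}$-net of $[0,1)$, so its translate $2^nx+\{k\varepsilon\}$ cannot miss $(a,b)$. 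The crux of the proof is a quantitative, perturbation-stable version of this: for a carefully chosen irrational $\varepsilon$ there should be $\delta_\varepsilon>0$ so that \emph{every} $f$ with $\|\tilde f-\tilde D_\varepsilon\|_\infty<\delta_\varepsilon$ still forces the $\langle D,f\rangle$-orbit of any point to enter every gap of length $\ge\eta(\delta_\varepsilon)$, where $\eta(\delta)\to0$ as $\delta\to0$; combined with the standard fact that a $D$-invariant set all of whose gaps are $<\eta$ is confined to a survivor set of the doubling map with an open hole of size $\eta$, hence has dimension bounded away from $1$ (cf. the sets $M_{\underline c}'$ of \S2 and the Parry characterisation, Theorem~\ref{Parry 1}), one obtains a uniform bound contradicting ``$K\ne\varnothing$, proper''. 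Then $G_D:=\bigcup_\varepsilon B(D_\varepsilon,\delta_\varepsilon)$ is open, lies in the good set, and has $D\in\text{cl}(G_D)$.

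A complementary viewpoint helps organise the bad locus: every $f$ close to some $D_\varepsilon$ has two fundamental intervals, hence is conjugate to $D$ by a H\"older homeomorphism $h_f$, unique because $D$ has trivial centraliser in the group of circle homeomorphisms, depending continuously on $f$, with $h_{D_\varepsilon}=R_{-\varepsilon}$ (rotation by $-\varepsilon$); and $f$ shares a non-trivial invariant set with $D$ iff $h_f$ carries some non-trivial $D$-invariant compact set to a $D$-invariant one. For each fixed $SFT$ $\Sigma\in\mathcal K_D$ the set of circle homeomorphisms $\phi$ with $D(\phi(\Sigma))=\phi(\Sigma)$ is \emph{closed} (Hausdorff-continuity of $\phi\mapsto\phi(\Sigma)$ plus continuity of $D$) and avoids $R_{-\varepsilon}$ for irrational $\varepsilon$, which shows the bad maps near $D_\varepsilon$ are meagre — but, crucially, not that the good maps contain an open set. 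Reconciling ``$G_D$ open'' with ``no joint invariant set of \emph{any} dimension'' is therefore the main obstacle: the property is a priori only a $G_\delta$, so genericity plus Baire does not suffice, and one really needs the $\delta$-uniform spreading estimate for $\langle D,f\rangle$, uniform over a $C^{\alpha}$-ball and over $K$. A secondary nuisance is that the genuinely bad maps $D+p/q$ with $q$ odd accumulate at $D$ — they share the finite set $\{j/q:0\le j<q\}$ with $D$ — so the radii $\delta_\varepsilon$ must be chosen to dodge them.
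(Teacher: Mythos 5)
Your proposal stops short of a proof at exactly the point you identify as ``the main obstacle,'' and the obstacle is illusory. You write that the good property is ``a priori only a $G_\delta$, so genericity plus Baire does not suffice,'' and conclude that one ``really needs'' a quantitative $\delta$-uniform spreading estimate for the semigroup $\langle D,f\rangle$ --- which you then state as something that ``should be'' true without establishing it. In fact the set $G_D:=\{f\in\mathcal E^\alpha:\ f(K)\neq K\ \forall K\in\mathcal K_D\}$ is genuinely $\|\cdot\|_{C^\alpha}$-open, and this is what the paper proves, using the compactness of $(\mathcal K_D,d_H)$ from Theorem~\ref{Pr. structure of set of D-invariant sets}. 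The argument is short: if $G_D$ were not open at some $f\in G_D$, take $g_n\to f$ and $K_n\in\mathcal K_D$ with $g_n(K_n)=K_n$; by compactness pass to $K_{n_k}\overset{d_H}{\longrightarrow}S\in\mathcal K_D$; then $d_H(S,f(S))\le d_H(S,K_{n_k})+d_H\bigl(f(K_{n_k}),g_{n_k}(K_{n_k})\bigr)\to0$, so $f(S)=S$, contradicting $f\in G_D$. Your last paragraph almost touches this (``Hausdorff-continuity of $\phi\mapsto\phi(\Sigma)$ plus continuity of $D$ gives a closed bad set''), but you only apply it one SFT $\Sigma$ at a time, yielding meagreness; the compactness of $\mathcal K_D$ is precisely what lets you take all $K$ at once and upgrade meagre to open. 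Once openness of $G_D$ is in hand, the rest of the paper's proof is just your first observation: $D_\varepsilon=D+\varepsilon\bmod1$ with $\varepsilon$ irrational lies in $G_D$ because $D_\varepsilon(K)=K$ and $D(K)=K$ force $K+\varepsilon=K$, hence $K$ trivial by minimality of the irrational rotation; letting $\varepsilon\downarrow0$ gives $D\in\mathrm{cl}(G_D)$.

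Two smaller remarks. First, your observation that the translation trick of Proposition~\ref{Pr. dimension result 1, less than 1/2} cannot be recycled here is correct and worth keeping, but it points to abandoning the per-$SFT$ intersection-free reduction, not to a new orbit-spreading estimate. Second, your ``secondary nuisance'' about $D+p/q$ accumulating at $D$ is real but is handled automatically in the paper's formulation: those maps simply fail to belong to $G_D$, and nothing in the compactness argument requires $G_D$ to contain a whole punctured neighbourhood of $D$ --- it only needs to be open with $D$ on its boundary.
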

\begin{proof}
	Set $G_D:=\{ f\in \mathcal{E}^{\alpha} : \ f(K)\neq K, \ \forall K\in \mathcal{K}_D \}$ and let $\epsilon > 0$, $\epsilon \in \mathbb{R}\setminus \mathbb{Q}$. Then, $D_{\epsilon} := D + \epsilon \ (mod1)$, $D_{\epsilon}(K)\neq K$, $\forall K\in \mathcal{K}_D$. Indeed, if $K\in \mathcal{K}_D$,  $D_{\epsilon}(K)=K$ is equivalent to $K+\epsilon \ (mod1) = K$. Since $T_{\epsilon}(x):= x+ \epsilon (mod1)$ is minimal, this is only true for $K=\varnothing$ or $K=[0,1)$. 
	
	Furthermore $G_D$ is open. If not, then there exists an $f\in G_D$ sequence $(\epsilon_n)$, where $\epsilon_n \searrow 0$, and, for any $n$,  \mbox{ $\exists \ g_n\in B_{C^{\alpha}}(f, \epsilon_n)=\{ f\in \mathcal{E}^{\alpha}: \ \|g-f\|_{C^{\alpha}}< \epsilon_n \}$} and $K_n \in \mathcal{K}_D$, such that $g_n(K_n)=K_n$. Since $\mathcal{K}_D$ is compact, there exist a subsequence $(K_{n_k})_{k\in \mathbb{N}}$ and a $D$-invariant set $S$, such that $K_{n_k}\overset{d_H}{\longrightarrow}S$. Moreover $f(S)=S$. Indeed, since $g_{n_k}(K_{n_k}) = K_{n_k}$, 
	$$d_H\big(S, f(S)\big) \leq d_H\big(S,K_{n_k}\big) + d_H\big(f(K_{n_k}),g(K_{n_k})\big)$$
	and $d_H\big(f(K_{n_k}),g_{n_k}(K_{n_k})\big) \to 0$, since $g_n \overset{C^{\alpha}}{\longrightarrow} f$, which is a contradiction since we assumed that $f\in G_D$.
\end{proof}

\begin{note}
Theorem \ref{Th. locally result} can be similarly be stated and proven when instead of the doubling map, $D$, we consider any map $f\in \mathcal{E}^{\alpha}$.
\end{note}

\smallskip

\section{Dimensional results}

In this section we will study invariant sets in more detail.  In particular we follow the observation that if $K$ is an invariant set for the orientation preserving maps $f, g$ that are topologically conjugated $h\circ f=g \circ h$ via a homeomorphism $h$, then $h$ has to map an endpoint to an endpoint since $h$ is monotone as a homeomorphism. Also observe that the investigation of orientation preserving maps is sufficient for our purposes in order to draw results for all monotone expanding Markov maps. In other words we may also assume that $f$ is orientation preserving and thus $f' > 0$ and $\chi_f$ is increasing, otherwise we consider the square of the map. That leads, in a natural way, to the investigation of endpoints of invariant sets and the size of the invariant sets contained between a left and a right endpoint. Of course all the results of this section are not limited to only the case of monotone expanding Markov maps.

The set $M:=\{\underline{x}\in \Sigma_{m+1} : \ \sigma^n(\underline{x}) < \underline{x} , \ \forall n>0\}$ contains all right endpoints of all the invariant sets and we have that $\dim_H (M) = 1$, when we consider the doubling map and its respective usual coding space, \cite{Johan BAN}; in particular when $m=2$. Following the exact same reasoning as in the proof there, we will show that this is true for every $m\in\mathbb{N}$.

\begin{lemma} \label{Lemma dimM=1 for all m}
The set $M:=\{\underline{x}\in \Sigma_{m+1} : \ \sigma^n(\underline{x}) < \underline{x} , \ \forall n>0\}$ has full Hausdorff dimension for every $m\in \mathbb{N}$.
\end{lemma}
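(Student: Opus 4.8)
The plan is to show that $M$ contains subshift-of-finite-type pieces whose Hausdorff dimension (equivalently, by Theorem~\ref{Th. upper box dim = Hausdorff dim = entropy} and the accompanying Note, whose topological entropy divided by $\log(m+1)$) approaches $1$. Concretely, I would build, for each large $N$, a sequence $\underline{c}^{(N)} \in \Sigma_{m+1}$ close to the maximal sequence $m^\infty$ and use Parry's theory ($\beta$-shifts) to control the set of kneading sequences below it. The key observation is the Remark after Theorem~\ref{Parry 2}: a kneading sequence — i.e. an $\underline{x}$ with $\sigma^n(\underline{x}) < \underline{x}$ for all $n > 0$ — is exactly a $\beta$-expansion of $1$ for some $\beta \in (1, m]$, so $M$ is the set of all expansions of $1$ over all bases $\beta \in (1, m+1)$ (using alphabet $\{0,\dots,m\}$, so $\beta$ ranges up to $m+1$). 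Thus $M \supseteq S_\beta$ fails in general, but each individual $\beta$-shift $S_\beta$ with $1_\beta$ not eventually periodic is \emph{almost} inside $M$: by Theorem~\ref{Parry 1}, $\underline{s}\in S_\beta$ iff $\sigma^n(\underline{s}) < 1_\beta$ for all $n\geq 1$, which is the same inequality defining $M$ but with $\underline{x}$ replaced by the fixed $1_\beta$. The honest statement is $M \supseteq \{\underline{x} \in S_\beta : \underline{x} \leq 1_\beta\}$-type sets; more usefully, $M$ contains $\{\underline x : 1_{\beta_0} \le \underline x, \ \sigma^n \underline x < \underline x \ \forall n\}$, and I would instead argue directly at the level of admissible words.

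The cleanest route: fix $\beta$ close to $m+1$ with $1_\beta$ not eventually periodic, say $1_\beta = (m-1)\,b_2 b_3\cdots$ with most $b_i$ equal to $m$. Consider the SFT $\Sigma^{(\beta)}$ obtained by forbidding, up to some large length $\ell$, exactly the words that are \emph{not} prefixes of shifts allowed in $S_\beta$; equivalently take the order-$\ell$ approximation of the $\beta$-shift. Every sufficiently long admissible word $w$ of $\Sigma^{(\beta)}$ can be turned into a point of $M$: take $\underline{x} = (1_\beta[1..k]\, w)^{\text{suitable completion}}$ — more precisely, prepend a large-but-not-maximal initial block and then concatenate, so that the very first symbol dominates and $\sigma^n(\underline{x}) < \underline{x}$ holds for all $n$ because $\underline x$ starts with a symbol strictly larger than all later first-symbols of its shifts, or when the first symbols tie, the $\beta$-admissibility inequality $\sigma^n \underline x < 1_\beta \le \underline x$ kicks in. This embeds (a subsystem of) $\Sigma^{(\beta)}$ into $M$ via a map that changes only finitely many coordinates, hence preserves Hausdorff dimension, so $\dim_H M \geq \dim_H \Sigma^{(\beta)} = h_{\mathrm{top}}(S_\beta^{(\ell)})/\log(m+1)$. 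Letting first $\ell \to \infty$ (so $\Sigma^{(\beta)} \downarrow S_\beta$ and, by Lemma~\ref{Lemma semicontinuity of entropy}, $h_{\mathrm{top}}(\Sigma^{(\beta)}) \to h_{\mathrm{top}}(S_\beta) = \log\beta$) and then $\beta \uparrow m+1$, we get $\dim_H M \geq \log\beta / \log(m+1) \to 1$. Since $\dim_H M \leq 1$ trivially, we conclude $\dim_H M = 1$.

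I expect the main obstacle to be the bookkeeping in the embedding step: making sure the concatenation $(\text{fixed prefix})\cdot w$ genuinely lies in $M$, i.e. that the strict inequality $\sigma^n(\underline{x}) < \underline{x}$ survives for \emph{every} $n$, including the shifts that land inside the glued region and those that expose a long run of $m$'s. This is handled by choosing the fixed prefix to begin with the symbol $m$ exactly once is impossible (then $\underline x$ itself starts with $m$, forcing $1_\beta$ comparisons), so instead one starts the prefix with $m-1$ followed by enough $m$'s, matching the structure of an actual $1_\beta$; the admissibility criterion of Theorem~\ref{Parry 1} then does the work, and the only subtlety is the boundary case where $1_\beta$ is eventually periodic, which we avoid by choosing $\beta$ in the (dense, full-measure) set where it is not — or handle separately using the second clause of Theorem~\ref{Parry 1}. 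The remaining ingredient, that $h_{\mathrm{top}}(S_\beta) = \log \beta$, is classical (Parry/Rényi) and I would cite it; combined with Theorem~\ref{Th. upper box dim = Hausdorff dim = entropy} and the dimension-preservation Lemma~\ref{Lemma dim. invariant under Lip.} applied to the finite-coordinate-change map (which is bi-Lipschitz onto its image), this closes the argument.
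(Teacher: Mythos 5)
Your plan — embed a high-entropy piece of a $\beta$-shift into $M$ via a prefix and invoke $h_{\mathrm{top}}(S_\beta)=\log\beta$ together with Theorem~\ref{Th. upper box dim = Hausdorff dim = entropy} — is a genuinely different route from the paper's. The paper never touches the $\beta$-shift itself: it observes that the one-parameter family of kneading sequences $\{1_\beta:\beta\in[\beta_{k-1},\beta_k]\}$ is \emph{already} a subset of $M$ (via Proposition~\ref{Parry 0} and Theorem~\ref{Parry 2}), and then lower-bounds the dimension of that subset directly by the H\"older exponent $\ln\beta_{k-1}/\ln\beta_k$ from Theorem~\ref{Schmeling}; taking $\beta_{k-1},\beta_k$ close finishes the argument with no embedding step at all. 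Your argument trades Schmeling's theorem for Parry's entropy formula, which is arguably more elementary, but at the cost of having to verify that a whole positive-dimensional set lands inside $M$.

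That verification is where your sketch has a real gap. With the prefix $1_\beta[1..k]$ you propose, the inequality $\sigma^n\underline{x}<1_\beta\le\underline{x}$ that you want to ``kick in'' requires $\underline{x}\ge 1_\beta$, i.e.\ $w\ge\sigma^k(1_\beta)$; this fails for a generic $\beta$-admissible $w$, so you would only be embedding a proper (and a priori small) sub-collection of $S_\beta$. The fix is to drop the $1_\beta$-prefix idea entirely and use $\underline{x}=m^N w$ with $N$ strictly larger than the length $N_0$ of the initial run of $m$'s in $1_\beta$. Then for $n\ge N$ one has $\sigma^n(\underline{x})=\sigma^{n-N}(w)\le 1_\beta<m^N w=\underline{x}$ because $1_\beta$ has an $m$-run of length only $N_0<N$, while for $1\le n<N$ the comparison reduces to $w<m^n w$, which holds because $\beta$-admissibility caps every $m$-run in $w$ at length $N_0$. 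This map $w\mapsto m^N w$ is bi-Lipschitz onto its image (it is inverted by $\sigma^N$ restricted to the cylinder $[m^N]$), so it preserves Hausdorff dimension, and you get $\dim_H M\ge\dim_H S_\beta=\log\beta/\log(m+1)\to 1$ directly, with no need for the SFT approximations $\Sigma^{(\beta)}$ or the extra limit in $\ell$; choosing $\beta$ with $1_\beta$ not eventually periodic (a co-countable set) avoids the second clause of Theorem~\ref{Parry 1}. With that correction your approach closes, and it is a clean alternative to the paper's Schmeling-based argument.
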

\begin{proof}
By Theorem \ref{Parry 2}, the set $M$ is the set of $\beta$ expansions of $1$, for $1< \beta <m$. Let $\beta_k$ be the real number related to $1^k0^{\infty}$, i.e. the unique $\beta_k \in (1, m)$ such that $1^k0^{\infty} = 1_{\beta_k}$ (Theorem \ref{Parry 2}).
Then we have that $I(\beta_{k-1}, \beta_k) \subset M$ (Proposition \ref{Parry 0}). 

By Theorem \ref{Schmeling} the numbers, $\beta_{k-1}$, $\beta_k$ can be arbitrarily close for large enough $k$. We also get the following inequalities,
\begin{align*}
\dim_H(M)
 &\geq \ 
 \dim_H\bigg(\rho_{\beta_{k-1}, \beta_k}^{-1}\big( [\beta_{k-1}, \beta_k]\big)\bigg) \geq
 \\&\geq \ 
 \frac{\ln \beta_{k-1}}{\ln \beta_k} \dim_H\big( [\beta_{k-1}, \beta_k] \big) 
 = \ 
 \frac{\ln \beta_{k-1}}{\ln \beta_k}
\end{align*}
where the right hand side of the inequality can grow arbitrarily close to 1, for sufficiently large $k>1$.
\end{proof}

We clearly have that $M_{\underline{c}} = M_{\underline{c}}' \ \cap \ M$ and thus $\dim_H(M_{\underline{c}}) \leq \dim_H (M_{\underline{c}}')$. The next result shows that the Hausdorff dimension of those two sets are in fact equal. Observe that for $\underline{c}=m^{\infty}$ we have the result from Lemma \ref{Lemma dimM=1 for all m}.

\begin{lemma} \label{Lemma "minimal" kneading} 
Let $\underline{c}=(c_1, c_2, c_3, \ldots ) \in \Sigma_{m+1}$, so that it is not a kneading sequence and $\underline{c}\neq m^{\infty}$. Then there exists a kneading sequence $\underline{d}$ such that $\underline{c} < \underline{d}$ and if $A:=\{\underline{a} \in \Sigma_{m+1} : \ \underline{a} \text{ is another kneading sequence so that } \underline{c} < \underline{a} < \underline{d} \}$, then $A$ is empty.
\end{lemma}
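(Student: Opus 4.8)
The plan is to take for $\underline{d}$ the least kneading sequence lying above $\underline{c}$, the real work being to show that such a least one exists. Concretely I would set
$$\underline{d}\ :=\ \inf\bigl\{\,\underline{a}\in\Sigma_{m+1}\ :\ \underline{a}\ \text{is kneading and}\ \underline{a}>\underline{c}\,\bigr\},$$
the infimum taken in the compact lexicographically ordered space $\Sigma_{m+1}$, and then establish: (i) the set $S$ over which the infimum runs is non-empty; (ii) $\underline{d}>\underline{c}$; (iii) $\underline{d}$ is itself kneading. Granting (i)--(iii) the lemma follows at once: by (iii) $\underline{d}\in S$, so $\underline{d}$ is a kneading sequence with $\underline{c}<\underline{d}$, and if some kneading $\underline{a}$ had $\underline{c}<\underline{a}<\underline{d}$ then $\underline{a}\in S$ would contradict $\underline{d}=\inf S$; hence the set $A$ of the statement is empty.

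For (i), since $\underline{c}\ne m^{\infty}$ there is a first coordinate $j$ with $c_j<m$, and the sequence $m^{\,j}(m-1)0^{\infty}$ is $>\underline{c}$ (they first differ at coordinate $j$) and is kneading, as one checks directly that each of its shifts is strictly smaller than it; so $S\ne\varnothing$, and since every kneading sequence is $<m^{\infty}$ this also gives $\underline{d}<m^{\infty}$. Steps (ii) and (iii) both follow from one rigidity statement that I would isolate as a claim: \emph{if $\underline{b}$ is not kneading and $\underline{b}\ne m^{\infty}$, then no sequence of kneading sequences converges to $\underline{b}$ from strictly above.} To prove the claim, fix $N\ge 1$ with $\sigma^{N}(\underline{b})\ge\underline{b}$. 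If $\sigma^{N}(\underline{b})>\underline{b}$, let $j$ be the first coordinate where these differ, so $b_{N+i}=b_i$ for $i<j$ and $b_{N+j}>b_j$; then any $\underline{a}$ agreeing with $\underline{b}$ on its first $N+j$ coordinates satisfies $\sigma^{N}(\underline{a})>\underline{a}$ and is therefore not kneading, while any sequence tending to $\underline{b}$ eventually has that agreement. If instead $\sigma^{N}(\underline{b})=\underline{b}$, then $\underline{b}$ is periodic of some minimal period $p\ge 1$; for a kneading $\underline{a}$ close enough to $\underline{b}$ from above, its first disagreement with $\underline{b}$ lies at a coordinate $q>p$ and is an increase, and shifting $\underline{a}$ by the largest multiple $tp<q$ of the period (so $t\ge 1$) and invoking periodicity of $\underline{b}$ exhibits $\sigma^{tp}(\underline{a})>\underline{a}$, contradicting kneadingness. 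Now (ii): every member of $S$ exceeds $\underline{c}$, so $\underline{d}\ge\underline{c}$, and $\underline{d}=\underline{c}$ would produce (using finiteness of the alphabet to find, for each $n$, a member of $S$ agreeing with $\underline{c}$ on its first $n$ coordinates) a sequence of kneading sequences approaching $\underline{c}$ from above, impossible by the claim with $\underline{b}=\underline{c}$; hence $\underline{d}>\underline{c}$. And (iii): if $\underline{d}$ were not kneading then, as $\underline{d}\ne m^{\infty}$, the claim with $\underline{b}=\underline{d}$ would forbid kneading sequences approaching $\underline{d}$ from above, yet $\underline{d}=\inf S$ forces exactly such a sequence.

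The main obstacle is the periodic sub-case of the rigidity claim, which is also the reason the naive argument ``the set of kneading sequences is closed, so its infimum lies in it'' fails: that set is not closed (for instance $m^{k}(m-1)0^{\infty}\to m^{\infty}$, and kneading sequences can even accumulate from above at \emph{kneading} sequences such as $10^{\infty}$), and what has to be proved is precisely that the only limit points reachable from strictly above a non-kneading point $\ne m^{\infty}$ are $m^{\infty}$ or kneading sequences. In the periodic case one must shift by a full multiple of the period -- not merely by $p$ or by the original $N$ -- so that the realigned tail begins by reading the period of $\underline{b}$ from its start, and one must know that the first disagreement with $\underline{b}$ occurs beyond one full period, which is exactly where ``close enough from above'' is used. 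A minor ancillary point, invoked twice above, is that in the lexicographically ordered $\Sigma_{m+1}$ the infimum of a non-empty set is a genuine limit of members of that set sharing ever-longer prefixes with it; this is a finite-alphabet compactness argument, since there are only finitely many possible ``first-disagreement cylinders'' of a given bounded length and each has infimum bounded away.
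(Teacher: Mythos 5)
Your argument is correct, and it takes a genuinely different route from the paper's. The paper defines $\ell_0 := \min\{\ell \geq 1 : \sigma^\ell(\underline{c}) \geq \underline{c}\}$ and writes down $\underline{d}$ explicitly (essentially $c_1\cdots c_{\ell_0-1}(c_{\ell_0}+1)0^\infty$), then verifies by a careful case analysis -- splitting on whether $\sigma^{\ell_0}(\underline{c}) = \underline{c}$ or $\sigma^{\ell_0}(\underline{c}) > \underline{c}$, and analysing the $c_1$-blocks of $\underline{c}$ -- that this $\underline{d}$ is kneading and minimal. You instead define $\underline{d}$ abstractly as $\inf S$ and reduce everything to the single rigidity statement that kneading sequences cannot accumulate from strictly above on a non-kneading point other than $m^\infty$, whose periodic sub-case (shift by the largest multiple $tp<q$ of the minimal period) is the one genuinely new idea and is handled correctly. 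The payoff of your approach is that it avoids the paper's rather delicate block bookkeeping and isolates a reusable lemma; the cost is that it does not exhibit $\underline{d}$ explicitly, which the paper's version does and which is closer to the spirit of the later computations. Two minor imprecisions worth tightening if you write this up: (a) when you invoke finiteness of the alphabet to produce members of $S$ agreeing with $\underline{c}$ on arbitrarily long prefixes, this uses that $\underline{c}$ does not terminate in $m^\infty$; if it does, observe instead that $\underline{c}$ then has an immediate successor $\underline{c}^+$ in the lexicographic order which is a lower bound for $S$, so $\inf S \geq \underline{c}^+ > \underline{c}$ directly, and the same remark is needed in step (iii); (b) your ``ancillary point'' as literally phrased (``the infimum of a non-empty set is a genuine limit of members of that set'') is false in general -- e.g.\ the infimum may be an isolated minimum -- but the version you actually use, that $\inf S$ either lies in $S$ or is approached from above along ever-longer common prefixes by members of $S$, is true and suffices.
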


\begin{terminology}
Let $\underline{x}=(x_1, x_2, \ldots ) \in \Sigma_{m+1}$. A \textit{$x_1$-block}, $B$, is a block $[x_1x_1\ldots x_1]$ that appears in $\underline{x}$ in the form $[yx_1x_1\ldots x_1z]=yBz$, where $y,z\neq x_1$. Of course, if the length of $B$ is denoted by $r \in \mathbb{N}$, then $r\in [1, \infty]$. In particular, the \textit{the first $x_1$-block}, $B_1$, is the first biggest block of the form $[x_1x_1\ldots x_1]$ that appears in $\underline{x}$ and if the length of $B_1$ is denoted by $r_1 \in \mathbb{N}$, then $r_1\in [1, \infty]$ and $\underline{x}= (x_1)^{\infty}$, if $r_1=\infty$ \ or \ $\underline{x}= B_1x_{r_1+1}x_{r_1+2}\ldots$, otherwise.
\end{terminology}

\begin{proof}
Set $\ell_0 := \min\{ \ell\in \mathbb{N}: \ \sigma^{\ell}(\underline{c})\geq \underline{c} \}$. Firstly we consider the case $\ell_0=1$, i.e. $\sigma(\underline{c}) \geq \underline{c}$. Observe that $c_1$ cannot be equal to $m$, otherwise $\underline{c}=m^{\infty}$. Indeed, if $c_1=m$, the relation $\sigma(\underline{c})\geq \underline{c}$ implies that $c_2\geq c_1=m$. Follows that $c_3\geq c_2=m$, from which follows that $c_4\geq c_3=m$ and so on. In other words, we would have that $c=m^{\infty}$.
We set  $$\underline{d}= (c_1+1)0^{\infty}$$
Then clearly $\underline{d}>\underline{c}$ and it is kneading. Let now $\underline{a}$ be a kneading sequence so that $\underline{c} < \underline{a}$. Then $a_1\geq c_1$ and in fact we will show that $a_1 > c_1$, which completes the proof for $\ell_0=1$. Assume that $a_1=c_1$. Clearly $\sigma(\underline{c}) \geq \underline{c}$ implies that $c_2\geq c_1$ and since $\underline{a}>\underline{c}$, we get that $a_2\geq c_2 \geq c_1=a_1$.  But, since $\underline{a}$ is kneading, $a_1\ge a_2$ and thus $a_1=a_2=c_1$. Repeating the same process we  get that $a_3=a_2=a_1=c_1$ so that we finally get that $\underline{a} = c_1^{\infty}$, which is not kneading. 
Thus, indeed, $a_1>c_1$ or equivalently $a_1\geq c_1+1$, which implies that $\underline{a} \geq \underline{d}$. Thus in this case, $A$ is the empty set.

Now we consider the case $\ell_0 > 1$. Then $\underline{c}\neq i^{\infty}$. Also, from minimality of $\ell_0$, $c_1 \geq c_2,c_3, \ldots , c_{\ell_0}$ and the first $c_1$-block has greater or equal length from any other $c_1$-block that appears in the block $[c_1c_2\ldots c_{\ell_0}]$. 
\begin{itemize}
\item Assume that $\sigma^{\ell_0}(\underline{c})= \underline{c}$. Then $\underline{c}$ is periodic with period $\ell_0$ (from minimality of $\ell_0$). In other words $\underline{c}$ is of the form $(c_1c_2\ldots c_{\ell_0})^{\infty}$ where $c_1\ge c_2, c_3, \ldots , c_{\ell_0}$  (again from minimality of $\ell_0$). In particular $c_{\ell_0}< c_1$. Indeed, since $\underline{c}\neq i^{\infty}$, we have that there exists an $i\in \{ 2,3,\ldots , \ell_0 \}$ so that $c_i < c_1$. Set $i_0:= \min\{i: \ c_i<c_1  \}$. Then $\underline{c}=\underbrace{c_1c_1\ldots c_1}_{(i_0-1)\text{-times}}c_{i_0}\ldots c_{\ell_0} \ \ldots = (c_1)^{i_0-1} c_{i_0}\ldots c_{\ell_0} \ \ldots$. If $c_{\ell_0}\ge c_1$ we have that,
\begin{align*}
\sigma^{\ell_0 -1}(\underline{c}) \ 
&= \ 
c_{\ell_0}(c_1)^{i_0-2}\underset{\tiny \substack{\uparrow \\i_0\text{-position}}}{c_1}c_{i_0}\ldots  c_{\ell_0} \ \ldots \ 
\\ \overset{c_{\ell_0}\geq c_1}&{\ge} \ 
c_{1}(c_1)^{i_0-2}\underset{\tiny {\substack{\uparrow \\i_0\text{-position}}}}{c_1} c_{i_0} \ldots c_{\ell_0} \ \ldots \ 
\\ \overset{c_1>c_{i_0}}&{>} \ 
(c_1)^{i_0-1}\underset{\tiny {\substack{\uparrow \\i_0\text{-position}}}}{c_{i_0}} m^{\infty} \ 
\\&\geq \ 
(c_1)^{i_0-1} c_{i_0}\ldots c_{\ell_0} \ \ldots \ 
= \ 
\underline{c}
\end{align*} 
which contradicts the minimality of $\ell_0$. 
We set 
$\underline{d} = c_1 c_2 \ldots (c_{\ell_0}+1)0^{\infty}$. 
Then clearly $\underline{d} > \underline{c}$. Also we cannot have that another kneading sequence in between for it would be of the form 
$$\underline{a}= (c_1c_2\ldots c_{\ell_0})^k c_1c_2\ldots c_{i-1} c_{i}' \ a_{k\cdot \ell_0 +i +1} \ a_{k\cdot \ell_0+i +2} \ \ldots$$
for some $k\geq 1$, since $\underline{a}< \underline{d}$ with $c_i' > c_i$, $i\in {1,2, \ldots , \ell_0}$. But then, 
$$\sigma^{k\cdot \ell_0-1}(\underline{a}) \ 
= \ 
c_1\ldots c_{i-1}c_i' \ a_{k\cdot \ell_0+i +1} \ a_{k\cdot \ell_0+i +2} \ \ldots \ 
> \ 
\underline{a}$$
Thus, taking for granted that $\underline{d}$ is kneading we have that $A=\emptyset$.

We are left with showing that $\underline{d}$ is indeed kneading. It is obvious when $c_1 > c_i$, for all $i\in \{2,\ldots , \ell_0\}$. If not, from minimality of $\ell_0$, either the first $c_1$-block, $B_1$, is of strictly greater length than any other of the $c_1$-blocks, $B$, that appear before $\ell_0$, or there exists another $c_1$-block, $B_j$, $j>1$, of the same length, where though (from minimality of $\ell_0$), there exists a block $C$ and $c'>c''$ so that they appear in $\underline{c}$ as blocks of the form $B_1Cc'$ and $B_jCc''$.
In both cases we get that $\underline{d}$ is  kneading.

\item Assume now that $\sigma^{\ell_0}(\underline{c})> \underline{c}$  Then there exists a $k\geq 1$ so that $c_{\ell_0 + k}> c_{k}$ and set $k_0$ be the minimum of those $k$'s. Then, we have that $c_{\ell_0} < c_1$. Indeed, 
\begin{itemize}
\item if $k_0=1$, then $c_{\ell_0+1}>c_1$ and if we assume that $c_{\ell_0}\geq c_1$,
\begin{align*}
\sigma^{\ell_0-1}(\underline{c}) \ 
= \ 
c_{\ell_0}c_{\ell_0 + 1}\ldots \ 
\geq \ 
c_1c_{\ell_0 + 1}\ldots \ 
\overset{c_{\ell_0+1}>c_1 \geq c_2}{>}
c_1c_2m^{\infty} \ 
\geq \ 
\underline{c}
\end{align*}
which contradicts the minimality of $\ell_0$.

\item if $k_0>1$, then $c_{\ell_0+i}=c_{i}$, for all $1\leq i<k_0$. Now, if the length of the first $c_1$-block is $r\geq 1$, we have that $r<\ell_0 + k_0$. Indeed, since otherwise we would have that $c_{\ell+k_0} = c_{k_0} = c_1$ , which contradicts the definition of $k_0$. In fact we have that $r<\ell_0 + k_0-1$, for if $r=\ell_0 + k_0-1$, then $\underline{c}$ would be of the form 
$$\underbrace{c_1 \ldots c_1}_{\ell_0+k_0-1 \text{-times}} c_{\ell_0+k_0}$$
and $c_{\ell_0+k_0}$ must be strictly greater than $c_1$. But then we would have that $\sigma(\underline{c}) > \underline{c}$, which contradicts the minimality of $\ell_0$.

This means that there exist a $p< k_0-1$ so that $c_{\ell_0 +p} \neq c_1$ and by the minimality of $k_0$, this means that $c_{\ell_0 +p} < c_1$. We set $p_0$ to be the first such $p$.  Since  $c_{\ell_0+i}=c_{i}$, for all $1\leq i<k_0$, we have that $c_{p_0}= c_{\ell_0 + p_0}< c_1$ and also that $p_0$ is the first term so that $c_{p_0} < c_1$. In other words $p_0= r + 1$ (which also means that $r < k_0$).

If we assume that $c_{\ell_0}\ge c_{1}$, then,
\begin{align*}
\sigma^{\ell_0-1}(\underline{c}) \  
&= \  
c_{\ell_0}c_{\ell_0 + 1}\ldots c_{\ell_0 +p_0}\ldots c_{\ell_0+k_0-1}c_{\ell_0+k_0} \ \ldots \ 
\\ \overset {\tiny \substack{c_i=c_{\ell_0+ i} \\ \forall 1\leq i\leq k_0-1} }&{=} \ 
c_{\ell_0}c_{1}\ldots c_{p_0}\ldots c_{k_0-1}c_{\ell_0+k_0} \ \ldots \ 
\\ \overset{c_{\ell_0} \geq c_1}&{\geq} \ 
c_1c_{ 1}\ldots c_{p_0}\ldots c_{k_0-1}c_{\ell_0+k_0} \ \ldots \ 
\\&= \ 
(c_1)^{r+1}\ldots c_{p_0}\ldots c_{k_0-1}c_{\ell_0+k_0} \ \ldots \ 
\\&> \ 
(c_1)^{r}\ldots c_{p_0}\ldots c_{k_0-1}c_{\ell_0+k_0} \ \ldots \  
\\&> \ 
(c_1)^{r}\ldots c_{p_0}\ldots c_{k_0-1}c_{k_0} m^{\infty} \  
\\&> \
\underline{c}
\end{align*}
which cannot hold from the definition of $\ell_0$.
\end{itemize}

We set 
$$\underline{d} := c_1c_2\ldots (c_{\ell_0}+1)0^{\infty}  .$$
Clearly $\underline{d}> \underline{c}$ and we cannot consider any other sequence in between, otherwise it would have to be of the form
$$\underline{a}=c_1\ldots c_{\ell_0}a_{\ell_0+1}\ldots$$
where for some $i\geq 1$, $a_{\ell_0+i}>c_{i}$ (and $a_{\ell_0+j}\geq c_j$, for all $1\leq j < i$) . But, from the inequality, $\sigma^{\ell_0}(\underline{c})> \underline{c}$, we would have that $\sigma^{\ell_0}(\underline{a})> \underline{a}$ and thus $\underline{a}$ cannot be kneading.

We are left with showing that $\underline{d}$ is kneading, where the same argument as above works in this case as well and that completes the proof. 
\end{itemize}
\end{proof}

\begin{proposition}\label{Pr. dim M_c = dim M_c'} 
Let $\underline{c}=(c_1, c_2, c_3, \ldots ) \in \Sigma_{m+1}$, then $\dim_H(M_{\underline{c}}) = \dim_H (M_{\underline{c}}')$.
\end{proposition}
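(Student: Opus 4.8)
The plan is to reduce to the case analysed in Lemma~\ref{Lemma "minimal" kneading} and then exploit the $\beta$-shift machinery (Theorems~\ref{Parry 1}, \ref{Parry 2}, \ref{Schmeling}) exactly as in the proof of Lemma~\ref{Lemma dimM=1 for all m}. Since $M_{\underline{c}} = M_{\underline{c}}' \cap M$ we always have $\dim_H(M_{\underline{c}}) \le \dim_H(M_{\underline{c}}')$, so only the reverse inequality needs proof. First I would dispose of the trivial cases: if $\underline{c} = m^\infty$ the statement is Lemma~\ref{Lemma dimM=1 for all m}, and if $\underline{c}$ is itself a kneading sequence then $M_{\underline{c}}$ already contains the full $\beta$-shift $S_\beta$ with $1_\beta = \underline{c}$ (or its Parry-normal form), so the relevant dimensional lower bound transfers directly. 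So assume $\underline{c}$ is not kneading and $\underline{c} \ne m^\infty$.

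Now invoke Lemma~\ref{Lemma "minimal" kneading} to get a kneading sequence $\underline{d} > \underline{c}$ with no kneading sequence strictly between $\underline{c}$ and $\underline{d}$. Let $\beta_d$ be the base with $1_{\beta_d} = \underline{d}$ (Theorem~\ref{Parry 2}). The key observation is that $M_{\underline{c}}'$ is built only from sequences whose every shift stays $\le \underline{c}$, and I claim the Hausdorff dimension of $M_{\underline{c}}'$ is governed by the same quantity that governs the dimension of $M_{\underline{d}}$: by the emptiness of the gap between $\underline{c}$ and $\underline{d}$, every $\underline{x} \in M_{\underline{c}}'$ whose orbit closure avoids the countable set of eventually-periodic points has all of its shifts strictly less than $\underline{d}$ (they are $\le \underline{c} < \underline{d}$), hence lies in $S_{\beta_d}$ or equivalently has a natural identification with a point of $M_{\underline{d}}'$; conversely, using a sequence $\beta_k \nearrow \beta_d$ (obtained as in Lemma~\ref{Lemma dimM=1 for all m} by perturbing $\underline{d}$ from below through kneading sequences $1_{\beta_k}$ all lying $\le \underline{c}$, which is possible precisely because there is no kneading sequence in $(\underline{c}, \underline{d})$, so $1_{\beta_k} \le \underline{c}$ for all $k$), the sets $I(\beta_{k-1}, \beta_k)$ are contained in $M_{\underline{c}}'$. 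Then Schmeling's Theorem~\ref{Schmeling} gives
$$
\dim_H(M_{\underline{c}}') \ge \dim_H\!\big(\rho_{\beta_{k-1},\beta_k}^{-1}([\beta_{k-1},\beta_k])\big) \ge \frac{\ln \beta_{k-1}}{\ln \beta_k},
$$
and the same chain of inclusions with the kneading constraint $\sigma^n(\underline{x}) < \underline{x}$ added—note each expansion of $1$ is kneading by Theorem~\ref{Parry 2}, so $I(\beta_{k-1}, \beta_k) \subset M \cap M_{\underline{c}}' = M_{\underline{c}}$—shows the identical lower bound for $\dim_H(M_{\underline{c}})$. Letting $k \to \infty$ and using that $\beta_{k-1}, \beta_k \to \beta_d$ together, we conclude $\dim_H(M_{\underline{c}}) \ge \dim_H(M_{\underline{c}}')$: more precisely, both dimensions equal $\sup_k \ln\beta_{k-1}/\ln\beta_k$ which is squeezed between them, giving equality.

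The subtle point, and the one I would be most careful about, is the claim that the kneading sequences $1_{\beta_k}$ approximating $\underline{d}$ from below can be chosen $\le \underline{c}$: this is exactly where the "no kneading sequence in the gap $(\underline{c},\underline{d})$" property from Lemma~\ref{Lemma "minimal" kneading} is used, and one must check both that such $\beta_k$ exist (density of $\beta$-expansions of $1$ among kneading sequences, via Theorem~\ref{Parry 2} and monotonicity of $\Xi$) and that $\beta_k \to \beta_d$, which follows from Proposition~\ref{Parry 0} since $1_{\beta_k} \to \underline{d} = 1_{\beta_d}$ in the lexicographic/product topology can be arranged. A secondary technical point is handling the countably many eventually-periodic sequences that must be excluded when matching $M_{\underline{c}}'$ with a $\beta$-shift; but as already noted in the text preceding Definition~\ref{definitions}, these form a countable set and hence do not affect Hausdorff dimension, so this causes no real difficulty.
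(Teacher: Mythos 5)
Your overall route is close to the paper's, but there is a genuine error where you dispose of the kneading case, and this is precisely the case where the paper does its real work.

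You claim that if $\underline{c}$ is kneading then ``$M_{\underline{c}}$ already contains the full $\beta$-shift $S_\beta$ with $1_\beta = \underline{c}$, so the relevant dimensional lower bound transfers directly.'' This inclusion is false. By definition $M_{\underline{c}} = \{\underline{x} : \sigma^n(\underline{x}) < \underline{x} \le \underline{c}, \ \forall n>0\}$, whereas by Theorem~\ref{Parry 1} the $\beta$-shift is $S_\beta = \{\underline{x} : \sigma^n(\underline{x}) < \underline{c}, \ \forall n\ge 1\}$. The $\beta$-shift contains many non-kneading sequences: $0^\infty$, for instance, satisfies $\sigma^n(0^\infty)=0^\infty < \underline{c}$ but certainly not $\sigma^n(0^\infty)<0^\infty$, so $0^\infty \in S_\beta \setminus M_{\underline{c}}$. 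By Theorem~\ref{Parry 2}, $M_{\underline{c}}$ is exactly the set of $\beta'$-expansions of $1$ for $\beta'\le \beta_{\underline{c}}$, which is a thin subset of $S_{\beta_{\underline{c}}}$. That this thin set nevertheless has the same Hausdorff dimension as the full $\beta$-shift $S_{\beta_{\underline{c}}}$ (equivalently as $M_{\underline{c}}'$, after discarding a countable set) is the actual content of the proposition in the kneading case, and the paper establishes it precisely with the Schmeling/$I(\beta_{k-1},\beta_k)$ machinery you deploy in the non-kneading case. So you have correctly identified the tool, but you apply it to the wrong case and hand-wave exactly the case it was built for; the kneading case is not trivial, it is the heart of the matter.

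A second, smaller problem is the final squeeze: you assert that both $\dim_H(M_{\underline{c}})$ and $\dim_H(M_{\underline{c}}')$ equal $\sup_k \ln\beta_{k-1}/\ln\beta_k$. With $\beta_k\to\beta_{\underline{d}}$ this supremum is $1$, which would force both dimensions to be $1$, false in general. The quantity $\ln\beta_{k-1}/\ln\beta_k$ bounds a dimension relative to the $\beta_{\underline{d}}$-shift encoding, not the absolute Hausdorff dimension in $\Sigma_{m+1}$. The paper is careful to phrase this as ``$M_{\underline{c}}$ has full Hausdorff dimension in $S_{\beta_{\underline{c}}}$'' — that is, relative dimension $1$, which then yields $\dim_H(M_{\underline{c}})=\dim_H(S_{\beta_{\underline{c}}})=\dim_H(M_{\underline{c}}')$. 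You also lose an easy simplification available in the non-kneading case: once one knows $M_{\underline{c}}=M_{\underline{d}}$ (which holds up to a single point because no kneading sequence lies in $(\underline{c},\underline{d})$), one can apply the kneading case to $\underline{d}$ and sandwich via $M_{\underline{d}}' \supset M_{\underline{c}}' \supset M_{\underline{c}}$, avoiding the need to re-run the Schmeling argument with the extra constraint $1_{\beta_k}\le\underline{c}$.
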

\begin{proof}  We firstly assume that $\underline{c}$ is a kneading sequence, i.e. $\sigma^n(\underline{c}) < \underline{c}$ for every $n>0$. Under this assumption we have that $\dim_H(M_{\underline{c}}') = \dim_H(M_{\underline{c}}'')$, where \hbox{$M_{\underline{c}}'' :=\{ \underline{x}\in \Sigma_{m+1} : \ \sigma^n(\underline{x}) < \underline{c} , \ \forall n > 0 \}$.} Indeed, let $\underline{x}\in M_{\underline{c}}$ be such that $\exists n_0=n_0(\underline{x}) \in \mathbb{N}$ for which we have $\sigma^{n_0}(\underline{x})=\underline{c}$. Then $n_0$ is unique, since for every $n>n_0$, $\sigma^{n}(\underline{x}) = \sigma^{n-n_0}(\sigma^{n_0}(\underline{x})) = \sigma^{n-n_0}(\underline{c})< \underline{c}$, where the last (strict) inequality holds since $\underline{c}$ is a kneading sequence. Thus such a sequence is of the form $B_{n_0}\underline{c}$ where $B_{n_0}$ is a block of length $n_0$ (also called $n$-block) and $B_{n_0}\leq \underline{c}$. Therefore, 
$$\{\underline{x}\in M_{\underline{c}}: \ \exists n_0=n_0(\underline{x}) \text{ such that } \sigma^{n_0}(\underline{x})=\underline{c}\}
\subseteq
\bigcup_{n=1}^{\infty} \{ B_n \underline{c}: \ B_n \text{ is an } n \text{-block}\}$$
and the last set is countable as countable union of finite sets. Thus the Hausdorff dimension is not affected by those points, i.e.,
$$\dim_H(M_{\underline{c}}') = \dim_H(M_{\underline{c}}'') \ .$$

Observe that $M_{\underline{c}}'' = S_{\beta_{\underline{c}}}$, where $\beta_{\underline{c}}$ is the unique real number in $(1,m]$ such that $\underline{c}=1_{\beta_{\underline{c}}}$. Our strategy in order to complete the proof for this case is to consider the encoding of the hole interval by $S_{\beta_{\underline{c}}}$. 
Following the same reasoning as in Lemma \ref{Lemma dimM=1 for all m}, since $M_{\underline{c}}\subset S_{\beta_{\underline{c}}}$, we will show that $M_{\underline{c}}$ has full Hausdorff dimension in $S_{\beta_{\underline{c}}}$, 
i.e. $\dim_{H}(M_{\underline{c}} \subset S_{\beta_{\underline{c}}}) \  = \  1$.
In this setting, $M_{\underline{c}}$ is the set of all $\beta$-expansions of $1$ for $1 < \beta < \beta_{\underline{c}}$. 
Observe now that if $\underline{c}=10^{\infty}$, then $M_{\underline{c}}=\{ \underline{c}\}$ and $M_{\underline{c}}' = \{ 0^{\infty}, \underline{c} \}$. In particular the two sets have the same Hausdorff dimension. So we are looking for $\underline{c}>10^{\infty}$. This means that either $c_1>1$ or $c_1=1$ and there is an $\ell\geq 2$ so that $c_{\ell}=1$ ($c_{\ell}$ cannot be greater than $1$, since $\underline{c}$ is considered to be kneading) and $c_{i}=0$ for every $1<i<\ell$, i.e. $\ell$ is the first position that we hit a non-zero term after $c_1$. 
Then for any $k\in \mathbb{N}$ we consider the kneading sequence $\underline{d}_{k}$ as follows,
$$ \underline{d}_{k}:= \ 
\begin{cases}
(c_1-1)^k0^{\infty}, &\quad c_1 > 1 \\
(10)^k0^{\infty}, &\quad c_1=c_2=1 \ (\ell=2) \\
(10^{\ell-2}0)^k0^{\infty}, &\quad c_1=1 \ \& \ \ell>2
\end{cases}
$$

Let $\beta_k$ be the real number related to $\underline{d}_{k}$, i.e. the unique $\beta_k \in (1, \beta_{\underline{c}})$ such that $\underline{d}_{k} = 1_{\beta_k}$ (Theorem \ref{Parry 2}).
Then we have that $I(\beta_{k-1}, \beta_k) \subset M_{\underline{c}}$ (Proposition \ref{Parry 0}). 

By Theorem \ref{Schmeling} the numbers, $\beta_{k-1}$, $\beta_k$ can be arbitrarily close for large enough $k$. We also get the following inequalities,
\begin{align*}
\dim_H(M_{\underline{c}}) \ 
&\geq \ 
\dim_H\bigg(\rho_{\beta_{k-1}, \beta_k}^{-1}\big( [\beta_{k-1}, \beta_k]\big)\bigg) \geq
\\&\geq \ 
\frac{\ln \beta_{k-1}}{\ln \beta_k} \dim_H\big( [\beta_{k-1}, \beta_k] \big) 
= \  
\frac{\ln \beta_{k-1}}{\ln \beta_k}
\end{align*}
where the right hand side of the inequality can grow arbitrarily close to 1, for sufficiently large $k>1$.

For the general case, we assume that $\underline{c}\in \Sigma_{m+1}$ is not a kneading sequence. If we consider the sequence $\underline{d}$, then by Lemma \ref{Lemma "minimal" kneading} we have that,
$$M_{\underline{c}} = \{\underline{x}: \ \sigma^n(\underline{x})< \underline{x}\leq \underline{d}, \ \forall n\geq 1 \} \ = M_{\underline{d}} \ .$$
In particular,
$$\dim_H (M_{\underline{c}}) = \dim_H (M_{\underline{d}}) \ .$$
Then, as we proved above, $M_{\underline{d}}$ has full Hausdorff dimension in $S_{\beta_{\underline{d}}}$, where $\beta_{\underline{d}}$ is the unique real number in $(1,m]$ such that $\underline{d}=1_{\beta_{\underline{d}}}$. In particular, 
$$\dim_H(M_{\underline{c}}) = \dim_H(M_{\underline{d}}')$$
Now since $M_{\underline{d}}' \supset M_{\underline{c}}' \supset M_{\underline{c}}$, we get the result.
\end{proof}

\bigskip

A similar approach works for a more general case. Namely, consider the sets 
\begin{align*}
&M_{\underline{c}, \underline{d}}':= \{ \underline{x}\in \Sigma_{m+1} : \ \underline{c} \leq  \sigma^n(\underline{x}) \leq \underline{d} , \ \forall n\geq 0 \} \\&
M_{\underline{c}, \underline{d}}:= \{ \underline{x}\in \Sigma_{m+1} : \ \underline{c} \leq\sigma^n(\underline{x})< x \leq \underline{d} , \ \forall n\geq 1\} \ .
\end{align*} 
Then $M_{\underline{c}, \underline{d}} \subset M_{\underline{c}, \underline{d}}'$ and thus $\dim_H(M_{\underline{c}, \underline{d}}) \leq \dim_H(M_{\underline{c}, \underline{d}}')$. In particular, we have that they have the same Hausdorff dimension.

\begin{proposition}\label{Pr. dim M_c,d = dim M_c,d'} 
Let $\underline{c}=(c_1, c_2, c_3, \ldots )$, $\underline{d}=(d_1, d_2, d_3, \ldots) \in \Sigma_{m+1}$ with $\underline{c}< \underline{d}$, then $\dim_H(M_{\underline{c}, \underline{d}}) = \dim_H (M_{\underline{c}, \underline{d}}')$
\end{proposition}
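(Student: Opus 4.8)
The plan is to transcribe the proof of Proposition~\ref{Pr. dim M_c = dim M_c'} to the two-sided setting; since $M_{\underline c,\underline d}\subseteq M_{\underline c,\underline d}'$, only the inequality $\dim_H(M_{\underline c,\underline d})\geq\dim_H(M_{\underline c,\underline d}')$ needs proof.

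\emph{First I would reduce to ``nice'' endpoints.} If $\underline d$ is not kneading (and $\underline d\neq m^\infty$), Lemma~\ref{Lemma "minimal" kneading} provides the smallest kneading $\underline d^\ast>\underline d$ with no kneading sequence strictly between them; then, exactly as in the closing lines of Proposition~\ref{Pr. dim M_c = dim M_c'}, a kneading $\underline x\leq\underline d^\ast$ is either $\leq\underline d$ or equals $\underline d^\ast$, so $M_{\underline c,\underline d}$ and $M_{\underline c,\underline d^\ast}$ differ by at most one point while $M_{\underline c,\underline d}'\subseteq M_{\underline c,\underline d^\ast}'$. Applying Lemma~\ref{Lemma "minimal" kneading} to $\widetilde{\underline c}$ and transporting through the order-reversing involution of Definition~\ref{definitions} (see Example~\ref{example of inversion}), one likewise replaces $\underline c$ by the largest co-kneading sequence $\underline c^\ast\leq\underline c$ (i.e.\ $\sigma^n(\underline c^\ast)>\underline c^\ast$ for all $n>0$), without changing $\dim_H(M_{\underline c,\underline d})$ and only enlarging $M'$ --- here one uses that the pointwise infimum of an orbit is co-kneading, in analogy with the treatment of the exact-hitting sequences in Proposition~\ref{Pr. dim M_c = dim M_c'}. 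Thus it suffices to prove the statement when $\underline d$ is kneading and $\underline c$ is co-kneading, and then, discarding (as in Proposition~\ref{Pr. dim M_c = dim M_c'}) the countably many sequences whose orbit hits $\underline c$ or $\underline d$ exactly, to prove
$$\dim_H(M_{\underline c,\underline d})\ \geq\ \dim_H(M_{\underline c,\underline d}''),\qquad M_{\underline c,\underline d}'':=\{\underline x\in\Sigma_{m+1}:\ \underline c<\sigma^n(\underline x)<\underline d,\ \forall n\geq0\}.$$

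\emph{Then comes the $\rho$-map estimate.} With $\underline d$ kneading and $\underline c$ co-kneading, $M_{\underline c,\underline d}''$ is the natural coding, under the intermediate (linear mod-one) map whose lower and upper kneading data are $\underline c$ and $\underline d$, of a full interval, so $\dim_H(M_{\underline c,\underline d}'')$ equals the dimension of that interval. On the other hand $M_{\underline c,\underline d}=\{\underline x:\ \underline c<\sigma^n(\underline x)<\underline x\leq\underline d,\ \forall n\geq1\}$ is exactly the set of admissible ``top'' kneading sequences of this family, totally ordered and accumulating at $\underline d$. Choosing --- just as the $\beta_k$ in Lemma~\ref{Lemma dimM=1 for all m} were chosen via Theorem~\ref{Schmeling} --- a sequence of top kneading sequences $\underline d_k\nearrow\underline d$ with consecutive terms arbitrarily close, I would obtain for each $k$ a subinterval $I_k$ and a parametrisation map $\rho_k$, onto $I_k$, H\"older with exponent $\theta_k$, and with $\rho_k^{-1}(I_k)\subseteq M_{\underline c,\underline d}$, where $\theta_k\to1$. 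Then, by the H\"older scaling of Hausdorff dimension, exactly as in the proof of Proposition~\ref{Pr. dim M_c = dim M_c'},
$$\dim_H(M_{\underline c,\underline d})\ \geq\ \dim_H\!\big(\rho_k^{-1}(I_k)\big)\ \geq\ \theta_k\,\dim_H(I_k)\ \xrightarrow[\ k\to\infty\ ]{}\ \dim_H(M_{\underline c,\underline d}'').$$
Unwinding the reductions then gives $\dim_H(M_{\underline c,\underline d})=\dim_H(M_{\underline c^\ast,\underline d^\ast})=\dim_H(M_{\underline c^\ast,\underline d^\ast}')\geq\dim_H(M_{\underline c,\underline d}')\geq\dim_H(M_{\underline c,\underline d})$, as required.

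\emph{The main obstacle} is the $\rho$-map estimate, since Theorem~\ref{Schmeling} and Theorem~\ref{Parry 2} are quoted only for genuine $\beta$-shifts: one has to develop their analogues for the intermediate system with the lower endpoint $\underline c$ frozen --- a Parry-type admissibility criterion realising $M_{\underline c,\underline d}''$ as the coding of an interval and $M_{\underline c,\underline d}$ as its set of top kneading sequences, together with a Schmeling-type H\"older bound on $\rho_k$ whose exponent tends to $1$ as the parameters $\underline d_{k-1},\underline d_k$ coalesce. Either one imports this from the theory of intermediate $\beta$-transformations, or, to remain self-contained, one compares the restricted system to an ordinary $\beta$-shift by a bi-H\"older recoding on a cylinder on which the lower constraint $\underline c$ is inactive and invokes Theorem~\ref{Schmeling} directly. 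A secondary point demanding care is the lower-endpoint reduction in Step~1, which --- unlike the kneading reduction of $\underline d$ --- must handle orbits accumulating at $\underline c^\ast$; everything else is a literal repetition of the one-sided argument.
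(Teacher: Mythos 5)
The overall plan (reduce to nice endpoints, then run a $\rho$-map/H\"older estimate) does parallel the paper's, but the proposal diverges from the paper at precisely the two places that matter, and the second of these is a genuine, self-acknowledged gap.

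\textbf{The lower endpoint.} You reduce $\underline{c}$ to the largest \emph{co-kneading} sequence $\underline{c}^{\ast}\le\underline{c}$ via the order-reversing involution. The paper does not do this: in the base case it assumes $\underline{c}$ is \emph{kneading}, so that $\underline{c}=1_{\beta_{\underline{c}}}$ and one stays entirely inside the ordinary Parry/Schmeling $\beta$-shift framework. In the general case the paper does not pass to a co-kneading neighbour at all; it either approximates $\underline{c}$ from below by a monotone sequence of kneading $\underline{c}_{k_n}\nearrow\underline{c}$ and invokes semicontinuity of topological entropy (Lemma~\ref{Lemma semicontinuity of entropy}), or replaces $\underline{c}$ by a minimal kneading $\underline{c}'<\underline{c}$. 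Your involution-based reduction also leans on the assertion that ``the pointwise infimum of an orbit is co-kneading'', from which you want $M_{\underline{c}^{\ast},\underline{d}}$ and $M_{\underline{c},\underline{d}}$ to differ by at most countably many points; this needs a real argument (the orbit points that dip into $[\underline{c}^{\ast},\underline{c})$ are not themselves co-kneading, only the infimum of the orbit closure is, up to the periodic exception), and the paper proves nothing of this sort inside Proposition~\ref{Pr. dim M_c,d = dim M_c,d'}.

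\textbf{The $\rho$-map step.} You correctly observe that once $\underline{c}$ is co-kneading and $\underline{d}$ is kneading, the two-sided constraint does not reduce to a single $\beta$-shift, and Theorems~\ref{Parry 2} and~\ref{Schmeling} do not apply as stated; you would need a Parry-type admissibility criterion and a Schmeling-type H\"older bound for intermediate (two-parameter) $\beta$-transformations. You flag this as the main obstacle. The paper avoids the obstacle entirely: because $\underline{c}$ is taken kneading, one has $(c_1)^{\infty}\ge\underline{c}$, and the explicit Cauchy family $\underline{a}_k$ (for instance $(c_1+1)^{k}(c_1)^{\infty}$ when $d_1\ge c_1+2$) is designed so that the lower constraint is satisfied automatically; then $\beta_k$ is read off from $\underline{a}_k=1_{\beta_k}$ and Theorem~\ref{Schmeling} is applied \emph{verbatim} to $\rho_{\beta_{k-1},\beta_k}$ on $I(\beta_{k-1},\beta_k)\subset S_{\beta_{\underline{d}}}$. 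Your second fallback (a bi-H\"older recoding onto a cylinder where the lower constraint is inactive) is in spirit close to what the paper does, but you would still need the ``$\underline{c}$ kneading $\Rightarrow(c_1)^\infty\geq\underline{c}$'' observation to make that cylinder argument work, and the proposal does not supply it.

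In short: you correctly identified the shape of the argument and the location of the difficulty, but the route you propose needs intermediate-$\beta$-transformation machinery that the paper neither uses nor develops, whereas the paper sidesteps it by keeping the lower endpoint kneading and handling non-kneading $\underline{c}$ via entropy semicontinuity rather than via the involution. As written, your proposal has a genuine gap at its central estimate.
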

\begin{proof}
Let us firstly assume that $\underline{c}$ and $\underline{d}$ are both kneading sequences.
Then, exactly as in Proposition \ref{Pr. dim M_c = dim M_c'}, we can show that the sets $M_{\underline{c}, \underline{d}}'$ and \hbox{$M_{\underline{c}, \underline{d}}'':= \{ \underline{x}\in \Sigma_{m+1} : \ \underline{c} \leq  \sigma^n(\underline{x}) < \underline{d} , \ \forall n\geq 0 \}$} have the same Hausdorff dimension. Furthermore, $M_{\underline{c}, \underline{d}} \subset M_{\underline{c}, \underline{d}}''$ and if $\beta_{\underline{c}}$ and $\beta_{\underline{d}}$ are the real numbers in $(1,m]$ so that $1_{\beta_{\underline{c}}} = \underline{c}$ and $1_{\beta_{\underline{d}}} = \underline{d}$, then $M_{\underline{c}, \underline{d}}'' \subset S_{\beta_{\underline{d}}}\setminus S_{\beta_{\underline{c}}}$. Thus it is sufficient to show that $\dim_H \left( M_{\underline{c}, \underline{d}} \subset S_{\beta_{\underline{d}}} \setminus S_{\beta_{\underline{c}}} \right) = 1$.

Observe that $d_1> c_1$, for if $d_1=c_1$, then $\underline{d}< (d_1)^{\infty}$. Thus if $\underline{x}\in M_{\underline{c}, \underline{d}}$ then $\underline{x}<(d_1)^{\infty}$ which implies that $\exists i\in \mathbb{N}$ so that $x_i<d_1$ and thus $\sigma^{i-1}(\underline{x}) < \underline{c}$ which is a contradiction
(in particular if $\underline{x} \in M_{\underline{c}, \underline{d}}$ then $x_1>c_1$ and $x_i\geq c_1$ for all $i\geq 2$). 
In other words $M_{\underline{c}, \underline{d}} = \emptyset$ in this case. Firstly, if $d_1 \geq c_1+2$, then consider, 
$$\underline{a}_k := (c_1+1)^k(c_1)^{\infty}$$
for any $k\in \mathbb{N}$. Then $(\underline{a}_k) \subset M_{\underline{c}, \underline{d}}$ and it is a Cauchy sequence.

We assume now that $d_1=c_1+1$. Since $\underline{d}$ is assumed to be kneading, we have that $d_i\le c_1+1$ for all $i\geq 2$ and there exists an $i_0\geq 2$ so that the inequality is strict. If $\underline{d}=(c_1+1)(c_1)^{\infty}$ then as above we can show that $M_{\underline{c}, \underline{d}}= \emptyset$. Therefore there exists an $j_0\geq 2$ so that $d_{j_0}=c_1+1$. We consider two cases:
\begin{itemize}
\item $\underline{d}=(c_1+1)^{\ell_0}D$, for some $\ell_0 \geq 2$ where $D$ is an infinite block (so that $\underline{d}$ is kneading). Then consider,
$$\underline{a}_k := (c_1+1)^{\ell_0-1}(c_1)^k (c_1+1)(c_1)^{\infty}$$
for any $k\in \mathbb{N}$. Then $(\underline{a}_k) \subset M_{\underline{c}, \underline{d}}$ and it is a Cauchy sequence.
\item $\underline{d}=(c_1+1)d_2 d_3 \ldots d_{\ell} (c_1+1) D''$ where $d_1, \ldots , d_{\ell} \leq c_1$ (since and $D''$ is an infinite block (so that $\underline{d}$ is kneading). In particular, from the above, $(c_1+1)c_1^{\ell} (c_1+1)D''=\underline{d}$. Then consider,
$$\underline{a}_k := (c_1+1) (c_1)^{\ell+k}(c_1+1)(c_1)^{\infty}$$
for any $k\in \mathbb{N}$. Then $(\underline{a}_k) \subset M_{\underline{c}, \underline{d}}$ and it is a Cauchy sequence.
\end{itemize}

Let $\beta_k$ be the real number related to $\underline{a}_{k}$, i.e. the unique $\beta_k \in (\beta_{\underline{c}}, \beta_{\underline{d}})$ such that \hbox{$\underline{a}_{k} = 1_{\beta_k}$} (Theorem \ref{Parry 2}).
Then we have that $I(\beta_{k-1}, \beta_k) \subset M_{\underline{c}}$ (Proposition \ref{Parry 0}). 
Furthermore, observe that $(\underline{a}_k)$ is Cauchy in $\Sigma_{m+1}$ and by Theorem \ref{Schmeling} the numbers, $\beta_{k-1}$, $\beta_k$ can be arbitrarily close for large enough $k$. We also get the following inequalities,
\begin{align*}
\dim_H(M_{\underline{c}, \underline{d}}) 
&\geq \ 
\dim_H\bigg(\rho_{\beta_{k-1}, \beta_k}^{-1}\big( [\beta_{k-1}, \beta_k]\big)\bigg) \ 
\\&\geq \ 
\frac{\ln \beta_{k-1}}{\ln \beta_k} \dim_H\big( [\beta_{k-1}, \beta_k] \big) \ 
= \ 
\frac{\ln \beta_{k-1}}{\ln \beta_k}
\end{align*}
In particular, 
$$\dim_H \left( M_{\underline{c}, \underline{d}} \subset S_{\beta_{\underline{d}}} \setminus S_{\beta_{\underline{c}}} \right) = 1 \ .$$

For the general case, i.e. when $\underline{c}$ and $\underline{d}$ are not necessarily kneading. In case $\underline{d}$ is not kneading, by Lemma \ref{Lemma "minimal" kneading}, we consider the kneading sequence $\underline{d}'$ so that $\underline{d} < \underline{d}'$ and there is no other kneading sequence in between. Thus, 
$$\dim_H(M_{\underline{c}, \underline{d}}) \ = \  \dim_H (M_{\underline{c}, \underline{d}'}) \ .$$
If $(\underline{c}_n)$ is a sequence of kneading sequences that converge to $\underline{c}$ then we can choose a subsequence so that $\underline{c}_{k_n}\nearrow \underline{c}$. Then 
by Theorem \ref{Th. upper box dim = Hausdorff dim = entropy} and Lemma \ref{Lemma semicontinuity of entropy} 
we have that $\dim_H (M_{\underline{c}_{k_n}, \underline{d}'}) \to \dim_H (M_{\underline{c}, \underline{d}'})$ and $\dim_H (M_{\underline{c}_{k_n}, \underline{d}'}') \to \dim_H (M_{\underline{c}, \underline{d}'}')$. From the above, since $\underline{c}_{k_n}$ and $\underline{d}'$ are kneading, $\dim_H (M_{\underline{c}_{k_n}, \underline{d}'}) = \dim_H (M_{\underline{c}_{k_n}, \underline{d}'}')$ for any $k_n$. Thus $\dim_H (M_{\underline{c}, \underline{d}'}) = \dim_H (M_{\underline{c}, \underline{d}'}')$ and in particular, $\dim_H (M_{\underline{c}, \underline{d}}) = \dim_H (M_{\underline{c}, \underline{d}'}')$. Since $M_{\underline{c}, \underline{d}'}' \supset M_{\underline{c}, \underline{d}}'$, we finally get that,
$$\dim_H (M_{\underline{c}, \underline{d}}) = \dim_H (M_{\underline{c}, \underline{d}}') \ .$$

If on the other hand, there is not such a sequence $(\underline{c}_n)$, then there exists a kneading sequence $\underline{c}' < \underline{c}$ so that there is no other kneading sequence in between. Then obviously $M_{\underline{c}, \underline{d}} = M_{\underline{c}', \underline{d}'}$ and thus $\dim_H(M_{\underline{c}, \underline{d}}) = \dim_H (M_{\underline{c}', \underline{d}'}')$. Now since $M_{\underline{c}, \underline{d}}' \subset M_{\underline{c}', \underline{d}'}'$ we get the requested, i.e.
$$\dim_H (M_{\underline{c}, \underline{d}}) = \dim_H (M_{\underline{c}, \underline{d}}') \ .$$
\end{proof}

\smallskip

\section{Critical Points}
Examining a little bit further the bahaviour off those sets, we can find some conditions for when the Hausodrff dimension is zero. In fact we can describe the "critical points" so that $M_{\underline{c}, \underline{d}}$ has dimension zero.

Observe that if $\underline{x}\in M_{\underline{c}, \underline{d}}$, then  $\underline{c} \leq \sigma^{\ell}(\underline{x}) < \underline{x} \leq \underline{d}$, for all $\ell \geq 1$, which means that,
$$ a_1 \leq x_i \leq b_1 \quad \forall i\geq 1$$
In other words, the remaining allowed alphabet is $\{c_1, c_1+1, \ldots , d_1-1, d_1 \}$. In particular, if $b_1 = a_1$ then, $M_{\underline{c}, \underline{d}} = \emptyset$.

In other words, the remaining allowed alphabet is $\{c_1, c_1+1, \ldots , d_1-1, d_1 \}$. In particular, if $c_1 = d_1$ then, $M_{\underline{c}, \underline{d}}' = \emptyset$. We examine now the case where $c_1 +1= d_1$. If $d_2\leq c_1$ then $M_{\underline{c}, \underline{d}} = \emptyset$. If $d_2 > c_1$, then $M_{\underline{c}, \underline{d}}$ has a positive Hausdorff dimension. Indeed, let $\underline{c}=c_1c_2\ldots$ and $\underline{d}=(c_1+1)d_2\ldots$, with $d_1>c_1$. Since we assume that $\underline{d}$ is kneading, then $d_2=d_1=c_1+1$. Then $M_{\underline{c}, \underline{d}}$ contains the subshift of finite type, 
$$S_1 := \{ \underline{x}\in \{c_1, c_1+1\}^{\mathbb{N}}| \ x_i=d_1=c_1+1 \Rightarrow x_{i+1}=c_1\}$$
where $S_1$ has positive entropy and thus positive Hausdorff dimension.

In general, if we have that all $d_i\leq c_1$, for $i>1$ then $\dim_H(M_{\underline{c},\underline{d}})=0$. In particular if $\underline{d}=(c_1+1)c_1^{\infty}$, the largest sequence so that $d_i\leq c_1$, for $i>1$, then $\dim_H(M_{\underline{c},\underline{d}})=0$. So we have to look even further in order to have positive dimension. In fact this is exactly the critical point, i.e. for any $\underline{d}\gneqq (c_1+1)c_1^{\infty}$, we have that $\dim(M_{\underline{c},\underline{d}}) > 0$, since it contains a subshift of finite type of the form 
$$S_n := \{ \underline{x}\in \{c_1, c_1+1\}^{\mathbb{N}}| \ x_i=d_1=c_1+1 \Rightarrow x_{i+1}=x_{i+2}=\ldots = x_{i+n}=c_1\}$$
which has a positive entropy.

We sum up the above discussion in the following Proposition.
\begin{proposition}
Let $\underline{c} < \underline{d}$. 
If we assume that $\underline{d}$ is kneading then, 
\begin{align*}
	\dim_H(M_{\underline{c},\underline{d}}) = 0, \quad \text{if } \underline{d}\leq (c_1+1)c_1^{\infty}\\
 	\dim_H(M_{\underline{c},\underline{d}}) > 0, \quad \text{if } \underline{d}> (c_1+1)c_1^{\infty}
 \end{align*}
\end{proposition}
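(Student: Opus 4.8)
The plan is to treat the two inequalities separately, and in each case to replace $M_{\underline{c},\underline{d}}$ by $M_{\underline{c},\underline{d}}'$: by Proposition~\ref{Pr. dim M_c,d = dim M_c,d'} these have the same Hausdorff dimension, and $M_{\underline{c},\underline{d}}'$ is far more tractable, being cut out by the single requirement that every $\sigma^n(\underline{x})$ lie in $[\underline{c},\underline{d}]$, with no self-referential condition $\sigma^n(\underline{x})<\underline{x}$. I will use throughout that every $\underline{x}\in M_{\underline{c},\underline{d}}'$ has all its symbols in $\{c_1,c_1+1,\dots,d_1\}$ (apply $\underline{c}\le\sigma^{i-1}(\underline{x})\le\underline{d}$); hence $c_1\le d_1$ from $\underline{c}<\underline{d}$, and $d_1\in\{c_1,c_1+1\}$ in the first case while $d_1\ge c_1+1$ in the second.

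\emph{Case $\underline{d}\le(c_1+1)c_1^{\infty}$.} Here I would show that $M_{\underline{c},\underline{d}}'$ is countable (possibly empty), which gives $\dim_H=0$. If $d_1=c_1$, then every element of $M_{\underline{c},\underline{d}}'$ equals $c_1^{\infty}$, and $\underline{d}$ being kneading forces $\underline{d}\neq c_1^{\infty}$, so $M_{\underline{c},\underline{d}}'\subseteq\{c_1^{\infty}\}$. If $d_1=c_1+1$, write $\sigma(\underline{d})=d_2d_3\cdots\le c_1^{\infty}$; whenever $x_i=c_1+1$ for some $\underline{x}\in M_{\underline{c},\underline{d}}'$, the inequality $\sigma^{i-1}(\underline{x})=(c_1+1)x_{i+1}x_{i+2}\cdots\le\underline{d}$ together with $x_j\ge c_1$ for all $j$ forces $x_{i+1},x_{i+2},\dots$ to begin matching $c_1^{\infty}$. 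If $\sigma(\underline{d})<c_1^{\infty}$ strictly one reaches a coordinate at which $\underline{x}$ would drop below $c_1$, impossible; so no symbol $c_1+1$ occurs and $M_{\underline{c},\underline{d}}'\subseteq\{c_1^{\infty}\}$. If $\underline{d}=(c_1+1)c_1^{\infty}$ exactly, the tail of $\underline{x}$ after its first $c_1+1$ must be $c_1^{\infty}$, so $M_{\underline{c},\underline{d}}'\subseteq\{c_1^{\infty}\}\cup\{c_1^{k}(c_1+1)c_1^{\infty}:k\ge 0\}$. In all sub-cases $M_{\underline{c},\underline{d}}'$ is countable, hence $\dim_H(M_{\underline{c},\underline{d}})=\dim_H(M_{\underline{c},\underline{d}}')=0$.

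\emph{Case $\underline{d}>(c_1+1)c_1^{\infty}$.} The goal is to embed a subshift of finite type $S\subseteq M_{\underline{c},\underline{d}}'$ with $h_{top}(S)>0$; then $\dim_H(M_{\underline{c},\underline{d}}')\ge\dim_H(S)>0$ by Theorem~\ref{Th. upper box dim = Hausdorff dim = entropy}, and we conclude with Proposition~\ref{Pr. dim M_c,d = dim M_c,d'}. First I would reduce to $\underline{c}$ kneading, by the same reductions as in the proof of Proposition~\ref{Pr. dim M_c,d = dim M_c,d'}: either $\underline{c}$ is an increasing limit of kneading sequences $\underline{c}_k\nearrow\underline{c}$, in which case $M_{\underline{c},\underline{d}}'=\bigcap_k M_{\underline{c}_k,\underline{d}}'$ and Lemma~\ref{Lemma semicontinuity of entropy} gives $h_{top}(M_{\underline{c},\underline{d}}')=\lim_k h_{top}(M_{\underline{c}_k,\underline{d}}')$ (the $S$ below depends only on $\underline{d}$, so this lower bound is uniform in $k$), or else there is a kneading $\underline{c}'<\underline{c}$ with $M_{\underline{c},\underline{d}}=M_{\underline{c}',\underline{d}}$ and $\underline{d}>(c_1'+1)(c_1')^{\infty}$ still. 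So assume $\underline{c}$ kneading, whence $c_i\le c_1$ for all $i$. If $d_1\ge c_1+2$, take $S=\{c_1,c_1+1\}^{\mathbb{N}}$: each shift of an element starts with $c_1$ or $c_1+1$, both $<d_1$, hence is $<\underline{d}$, and all its symbols are $\ge c_1\ge c_i$, so it dominates $\underline{c}$ termwise; thus $S\subseteq M_{\underline{c},\underline{d}}'$ with $\dim_H\ge\log 2/\log(m+1)$. If $d_1=c_1+1$, then $\underline{d}>(c_1+1)c_1^{\infty}$ forces $\underline{d}$ to begin with $(c_1+1)c_1^{\ell}(c_1+1)\cdots$ for some finite $\ell\ge 0$; fix $N\ge\ell+1$ and put $S=S_N:=\{\underline{x}\in\{c_1,c_1+1\}^{\mathbb{N}}: x_i=c_1+1\Rightarrow x_{i+1}=\cdots=x_{i+N}=c_1\}$, a subshift of finite type (forbidden words $(c_1+1)c_1^{j}(c_1+1)$, $0\le j<N$) of positive entropy, since it contains every concatenation of the two blocks $(c_1+1)c_1^{N}$ and $(c_1+1)c_1^{N+1}$. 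The lower inequality $\sigma^n(\underline{x})\ge\underline{c}$ again holds termwise; for the upper one, a shift of an element of $S_N$ either starts with $c_1<d_1$ (so is $<\underline{d}$) or begins with $(c_1+1)c_1^{N}$, which, as $N\ge\ell+1$, agrees with $\underline{d}$ on the first $\ell+1$ symbols but has $c_1$ at position $\ell+2$ where $\underline{d}$ has $c_1+1$, hence is $<\underline{d}$. So $S_N\subseteq M_{\underline{c},\underline{d}}'$ and $\dim_H(M_{\underline{c},\underline{d}})=\dim_H(M_{\underline{c},\underline{d}}')>0$.

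The main obstacle is the upper-bound check in the second case, together with the correct choice of $N$: an element of $S_N$ may carry several copies of $c_1+1$, so one of its shifts can look like $(c_1+1)c_1^{N}(c_1+1)c_1^{N}\cdots$, and this still must be $<\underline{d}$ — the point being exactly that $\underline{d}>(c_1+1)c_1^{\infty}$ fixes a coordinate no later than $N+1$ at which every such string already falls below $\underline{d}$. The reduction to kneading $\underline{c}$ is the other delicate step, as it is precisely what makes the lower inequality $\sigma^n(\underline{x})\ge\underline{c}$ available for the sequences in $S$; the remaining ingredients — the equality of dimensions and the positive entropy of run-length-limited shifts — are the combinatorics already used in Lemma~\ref{Lemma dimM=1 for all m} and Propositions~\ref{Pr. dim M_c = dim M_c'} and \ref{Pr. dim M_c,d = dim M_c,d'}.
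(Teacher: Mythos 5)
Your argument for the first case ($\underline{d}\le(c_1+1)c_1^{\infty}\Rightarrow\dim_H=0$) is correct and in fact more careful than the paper's one-paragraph discussion. The problem is in the second case, at the ``reduction to $\underline{c}$ kneading''. The dichotomy you borrow from Proposition~\ref{Pr. dim M_c,d = dim M_c,d'} --- either $\underline{c}_k\nearrow\underline{c}$ with $\underline{c}_k$ kneading, or there is a largest kneading $\underline{c}'<\underline{c}$ --- is not exhaustive. A kneading sequence $\underline{a}$ satisfies $a_i\le a_1$ for all $i$, so every kneading sequence below $\underline{c}$ is $\le c_1^{\infty}$; if $\underline{c}>c_1^{\infty}$ (i.e.\ $c_i>c_1$ for some $i$) their supremum is $\le c_1^{\infty}<\underline{c}$ and, since $c_1^{\infty}$ is not kneading, neither branch applies (e.g.\ $\underline{c}=020^{\infty}$ has no kneading sequence below it at all). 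Worse, the gap cannot be patched because the proposition itself is false in that regime: in $\Sigma_3$ take $\underline{c}=020^{\infty}$ and $\underline{d}=110^{\infty}$. Then $\underline{d}$ is kneading, $\underline{c}<\underline{d}$, and $\underline{d}>(c_1+1)c_1^{\infty}=10^{\infty}$, yet any $\underline{x}\in M_{\underline{c},\underline{d}}'$ must have all symbols in $\{0,1\}$, and $x_{n+1}=0$ would force $x_{n+2}\ge 2$, so $\underline{x}=1^{\infty}$, which is $>110^{\infty}$; hence $M_{\underline{c},\underline{d}}'=\emptyset$ and $\dim_H(M_{\underline{c},\underline{d}})=0$, contradicting the claim.

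The structural obstruction is precisely the thing your reduction step was meant to dodge: your $S_N\subset\{c_1,c_1+1\}^{\mathbb{N}}$ contains $c_1^{\infty}$ and sequences many of whose shifts agree with $c_1^{\infty}$ arbitrarily long, and these fail $\sigma^n(\underline{x})\ge\underline{c}$ whenever $\underline{c}>c_1^{\infty}$. The paper's own ``proof'' (the informal discussion preceding the statement, which is all it offers) simply asserts $S_n\subset M_{\underline{c},\underline{d}}$ without checking the lower constraint from $\underline{c}$ at all, so you have faithfully reproduced, and in trying to be rigorous exposed, a gap that is actually the paper's. A correct version needs an additional hypothesis such as $\underline{c}\le c_1^{\infty}$ (equivalently $c_i\le c_1$ for all $i$, automatic when $\underline{c}=0^{\infty}$ or $\underline{c}$ is itself kneading). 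Under that hypothesis your verification that $S_N\subseteq M_{\underline{c},\underline{d}}'$ is correct as written, no reduction to kneading $\underline{c}$ is needed, and the argument coincides with what the paper intended.
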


\begin{remark*}
	From Lemma \ref{Lemma "minimal" kneading} the case of $\underline{d}$ being a kneading sequence is sufficient.
\end{remark*}

\smallskip

\section{Continuity} \label{Section Continuity}
In \cite{Johan BAN}, J. Nilsson proves that the function $\phi : \underline{c}\mapsto \dim_H(\widetilde{M_{\underline{\widetilde{c}}}'})$ (see Example \ref{example of inversion}) is continuous for the case where $f(x)=2x (mod1)$. Of course we get it for $f\in \mathcal{E}^{\alpha}$, where $f$ is of order $2$. 
We will prove that this is still true for a map in $\mathcal{E}^{\alpha}$ of any order $m\in \mathbb{N}$. In fact we will prove the corresponding continuity property for $M_{\underline{c}, \underline{d}}'$, i.e. that $\dim_H (M_{\underline{c}, \underline{d}}')$ depends continuously on the pair $(\underline{c}, \underline{d})$, which gives the continuity of $\underline{c}\mapsto M_{\underline{c}'}$ as a corollary.

If $\underline{c}$ ends with $m^{\infty}$, then every term of a sequence of sequences approaching $\underline{c}$ from above is eventually equal to $\underline{c}$. Thus we will not consider this 'trivial' case as it can be derived in a same way as the more complicated case where $\underline{c}$ does not end with $m^{\infty}$. Therefore, we can assume that there exists a strictly increasing sequence $(j_n) \subset \mathbb{N}$ so that, $c_{j_n}<m$.

For each $n \in \mathbb{N}$, consider the following sequences,
\begin{itemize}
\item $\underline{y}_n := c_1\ldots (c_{j_n}+1)0^{\infty}$
\item $\underline{z}_n := d_1\ldots d_{j_n}0^{\infty}$
\end{itemize}
Then  for all $n \in \mathbb{N}$,  $\underline{c} \leq \underline{y}_n$
and
$\underline{z}_n \leq \underline{d}$. 
In particular, $\underline{y}_n \searrow \underline{c}$ and $\underline{z}_n \nearrow \underline{d}$. 

Let $B_{k}(S)$ denote the allowed blocks of length $k$ for the subshift $S \subset \Sigma_{m+1}$. For any sufficiently large $n$ so that $k <  j_{n}$, we have that, 
\begin{equation} \label{eq. allowed blocks of same length from inside for large n}
B_{k}(M_{\underline{c}, \underline{d}}') = B_{k}(M_{\underline{y}_{n} , \underline{z}_{n}}')
\end{equation}

Indeed, for $k < j_n$, we have that if the block $[x_1\ldots x_k]$ is in $B_{k}(M_{\underline{c}, \underline{d}}')$, then, as mentioned above, $x_1$, $x_2$, \ldots, $x_k \in \{ c_1, c_1+1, \ldots , d_{1}\}$ and it does not contain subblocks of the form
\begin{itemize}
\item $[c_1(c_2-1)] , \ [c_1(c_2-2)] , \ \ldots \ , \ [c_1c_1]$, if $c_2 \geq c_1$\\
and \\
no further restrictions for the subblocks of length greater or equal to $2$ other than those that arise from the remaining allowed alphabet, if $c_2 < c_1$.

\item $[c_1c_2(c_3-1)] , \ [c_1c_2(c_3-2)] , \ \ldots \ , \ [c_1c_2c_1]$, if $c_3 \geq c_1$\\
and\\
no further restrictions for the subblocks of length greater or equal to $3$ other than those that arise from the remaining allowed alphabet, if $c_3 < c_1$.
\end{itemize}
$$\vdots$$

\begin{itemize}
\item $[c_1 c_2 \ldots c_{k-1}(c_{k}-1)] , \ [c_1 c_2 \ldots c_{k-1}(c_{k}-2)] , \ \ldots \ , \ [c_1 c_2 \ldots c_{k-1}c_1]$, if $c_k \geq c_1$\\
and\\
no further restrictions for the subblocks of length greater or equal to $k$ other than those that arise from the remaining allowed alphabet, if $c_k < c_1$.

and

\item $[d_1(d_{2}+1)] , \ [d_1(d_{2}+2)] , \ \ldots \ , \ [d_1 d_1] $, if $d_2 \leq d_1$\\
and\\
no further restrictions for the subblocks of length greater or equal to $2$ other than those that arise from the remaining allowed alphabet, if $d_2 > d_1$.

\item $[d_1d_{2}(d_{3}+1)] , \ [d_1d_{2}(d_{3}+2)] , \ \ldots \ , \ [d_1d_{2}d_1]$, if $d_3 \leq d_1$\\
and\\
no further restrictions for the subblocks of length greater or equal to $3$ other than those that arise from the remaining allowed alphabet, if $d_3 > d_1$.
\end{itemize}
$$\vdots$$
\begin{itemize}
\item  $d_1d_{2}\ldots d_{k-1}(d_{k}+1) , \ d_1 d_{2} \ldots d_{k-1}(d_{k}+2) , \ \ldots  , \ d_1d_{2}\ldots d_{k-1}d_1$, if $d_k \leq d_1 $\\
and\\
no further restrictions for the subblocks of length greater or equal to $k$ other than those that arise from the remaining allowed alphabet, if $d_k > d_1$
\end{itemize}
Hence, the possible restrictions come from the first $k$-block of the sequences $\underline{c}$ and $\underline{d}$. Since $k< j_n$, the first $k$-blocks of $\underline{y}_n$ and $\underline{d}_n$ coincide with those of $\underline{c}$ and $\underline{d}$ respectively, so exactly the same subblocks are excluded for $B_{k} (M_{\underline{y}_n , \underline{z}_n}')$.

\begin{lemma} (Application of Fatou) \label{Lemma application of Fatou on N}
Let $(a_{n,\ell})_{n, \ell}$ be a sequence so that
\begin{itemize}
\item $a_{n, \ell} > 0$, for all $n$, $\ell \in \mathbb{N}$,
\item it is increasing w.r.t. $n$, in the sense that $a_{n, \ell} \leq a_{n, \ell+1}$,
\item the limits $\lim\limits_{\ell \to \infty}a_{n, \ell}$ and $\lim\limits_{n\to \infty} \lim\limits_{\ell\to \infty} a_{n, \ell}$ exist.
\end{itemize}
Then,
$$\lim_{n \to \infty} \lim_{\ell \to \infty} a_{n, \ell} \geq \lim_{\ell \to \infty} \lim_{n \to \infty} a_{n, \ell}$$
\end{lemma}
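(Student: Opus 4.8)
The plan is to transplant the standard proof of Fatou's lemma into this discrete two-index setting; the only ingredients really needed are the monotonicity of $(a_{n,\ell})_{\ell}$ and the fact that $\limsup$ preserves non-strict inequalities, so no heavy machinery is required. First I would fix an arbitrary $\ell_0\in\mathbb{N}$ and exploit the monotonicity hypothesis $a_{n,\ell}\le a_{n,\ell+1}$: for each fixed $n$ the sequence $(a_{n,\ell})_{\ell}$ is non-decreasing, hence
\[
a_{n,\ell_0}\ \le\ \sup_{\ell\in\mathbb{N}}a_{n,\ell}\ =\ \lim_{\ell\to\infty}a_{n,\ell},
\]
the last limit existing by hypothesis (and allowed to be $+\infty$, in which case everything below is trivial). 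This is the analogue of the truncation step in the measure-theoretic Fatou lemma.

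Next I would pass to the limit $n\to\infty$. Since the right-hand side above tends, as $n\to\infty$, to $\lim_{n\to\infty}\lim_{\ell\to\infty}a_{n,\ell}$ by hypothesis, and since upper limits preserve $\le$, this gives
\[
\limsup_{n\to\infty}a_{n,\ell_0}\ \le\ \lim_{n\to\infty}\lim_{\ell\to\infty}a_{n,\ell},
\]
a bound whose right-hand side no longer depends on $\ell_0$. When the inner limit $\lim_{n}a_{n,\ell_0}$ exists it equals this $\limsup$; otherwise one reads the symbol $\lim_\ell\lim_n$ in the statement as $\lim_\ell\limsup_n$, and the argument is unaffected. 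Finally, letting $\ell_0\to\infty$ on the left produces $\lim_{\ell_0\to\infty}\limsup_{n\to\infty}a_{n,\ell_0}\le\lim_{n\to\infty}\lim_{\ell\to\infty}a_{n,\ell}$, which is precisely the claimed inequality $\lim_\ell\lim_n a_{n,\ell}\le\lim_n\lim_\ell a_{n,\ell}$.

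The proof is genuinely short, so there is no real ``hard part''; the only point demanding attention is the bookkeeping of which limits are assumed to exist and, above all, the orientation of the inequalities. One must be sure to use the monotonicity in the index $\ell$ — so that $a_{n,\ell_0}$ is bounded \emph{above} by the full $\ell$-limit — because applying monotonicity in $n$ instead would yield the reverse inequality. Positivity of the $a_{n,\ell}$ plays only an auxiliary role: it keeps all the suprema and limits inside $[0,+\infty]$ so that none of the manipulations is vacuous, but it is not otherwise used.
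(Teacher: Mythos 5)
Your proof is correct and takes a genuinely different, more elementary route than the paper's. The paper telescopes in $\ell$: it sets $b_{n,1}:=a_{n,1}$, $b_{n,\ell}:=a_{n,\ell}-a_{n,\ell-1}$ for $\ell>1$, applies the measure-theoretic Fatou lemma on $(\mathbb{N},\text{counting measure})$ to the family $(b_{n,\cdot})_n$, and lets the telescoping collapse the partial sums to $a_{n,L}$ on both sides. You bypass Fatou entirely: fix $\ell_0$, observe that $\ell$-monotonicity gives $a_{n,\ell_0}\le\lim_\ell a_{n,\ell}$, apply $\limsup_n$ to both sides, then send $\ell_0\to\infty$. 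This is shorter, avoids measure theory, and makes the logical role of each hypothesis transparent. It is also arguably cleaner with respect to the hypotheses: in the paper's computation of the right-hand side of Fatou, the interchange of $\liminf_n$ with a finite sum is justified by invoking ``increasing w.r.t.\ $n$,'' a condition not contained in the stated inequality $a_{n,\ell}\le a_{n,\ell+1}$ (which is monotonicity in $\ell$), so the paper's proof quietly uses an extra assumption, whereas yours uses only what is literally written. You are also right to flag the orientation issue: monotonicity in $n$ instead of $\ell$ would give the reverse inequality and the statement would be false; and you are right that positivity plays only a cosmetic role, keeping things in $[0,+\infty]$.

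One small remark: the conclusion of the lemma is phrased with $\lim_\ell\lim_n a_{n,\ell}$, which presupposes that $\lim_n a_{n,\ell_0}$ exists for each $\ell_0$. Your $\limsup$ version is strictly more general and reduces to the stated inequality whenever those inner limits exist; it would be worth saying explicitly that the left-hand side $\lim_{\ell_0}\limsup_n a_{n,\ell_0}$ exists because $\limsup_n a_{n,\ell_0}$ inherits monotonicity in $\ell_0$ from $a_{n,\ell_0}$, so the outer limit is a monotone limit in $[0,+\infty]$.
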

\begin{proof}
Consider the sequence $b_{n,1}:= a_{n, 1}$ and $b_{n, \ell}:= a_{n, \ell}-a_{n, \ell-1}$, for $\ell>1$ and the counting measure $\mu$ on the measurable space $(\mathbb{N}, \mathcal{P}(\mathbb{N}))$. Then by Fatou's Lemma,
$$\liminf_{n} \int_{\mathbb{N}} b_{n, \ell} \ d\mu (\ell) \ \geq \ \int_{\mathbb{N}} \liminf_{n} b_{n, \ell} \ d\mu (\ell) \ .$$
Since we integrate w.r.t. the counting measure and from the definition of $b_{n, \ell}$
\begin{align*}
LHS
=
\liminf_n \sum_{\ell =1}^{\infty} b_{n, \ell}
=
\liminf_n \lim_{L\to \infty} \sum_{\ell =1}^{L} b_{n, \ell}
=
\liminf_n \lim_{L\to \infty} a_{n, L}
=
\lim_n \lim_{L\to \infty} a_{n, L}
\end{align*}
Furthermore, taking also into account that $a_{n, \ell}$ is increasing w.r.t. $n$, we have that,
\begin{align*}
RHS
&=
\sum_{\ell=1}^{\infty} \liminf_{n} b_{n, \ell}
= 
\lim_{L\to \infty} \sum_{\ell=1}^{L} \liminf_{n} b_{n, \ell}
\\&=
\lim_{L\to \infty} \liminf_{n} \sum_{\ell=1}^{L} b_{n, \ell}
=
\lim_{L\to \infty} \liminf_{n} a_{n, L}
=
\lim_{L\to \infty} \lim_{n} a_{n, L}
\end{align*}
\end{proof}

\begin{lemma} \label{Lemma most important part for continuity}
Let $\underline{c}$, $\underline{d}$ be two sequences in $\Sigma_{m+1}$. Furthermore, consider for every $n\in \mathbb{N}$ the sequences $\underline{y}_{n}$ and $\underline{z}_{n}$ as above. Then,
$$h_{top}(M_{\underline{c}, \underline{d}}') 
= 
\lim_{n \to \infty} h_{top}(M_{\underline{y}_n, \underline{z}_{n}}')
$$
\end{lemma}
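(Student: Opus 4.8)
The plan is to deduce the Lemma from the block-counting description of topological entropy, fed by the stabilization identity \eqref{eq. allowed blocks of same length from inside for large n}.

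First, a few structural observations. Each of the sets $M_{\underline{c},\underline{d}}'$ and $M_{\underline{y}_n,\underline{z}_n}'$ is a genuine subshift of $\Sigma_{m+1}$: it is $\sigma$-invariant, and it is closed as a subset, since the order interval $\{\underline{x}:\underline{c}\le\underline{x}\le\underline{d}\}$ is closed in the product topology and $M_{\underline{c},\underline{d}}'=\bigcap_{j\ge 0}\sigma^{-j}\big(\{\underline{x}:\underline{c}\le\underline{x}\le\underline{d}\}\big)$ is an intersection of closed sets. Hence for each such subshift $X$ the numbers $\log|B_\ell(X)|$ are subadditive in $\ell$ (a word of length $\ell+\ell'$ is determined by its length-$\ell$ prefix and its length-$\ell'$ suffix), so by Fekete's lemma $h_{top}(X)=\lim_{\ell\to\infty}\tfrac1\ell\log|B_\ell(X)|=\inf_{\ell}\tfrac1\ell\log|B_\ell(X)|$; in particular $\tfrac1\ell\log|B_\ell(X)|\ge h_{top}(X)$ for every $\ell$. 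Moreover, since $\underline{y}_n\searrow\underline{c}$ and $\underline{z}_n\nearrow\underline{d}$, the constraints defining $M_{\underline{y}_n,\underline{z}_n}'$ only loosen as $n$ grows and stay stronger than those defining $M_{\underline{c},\underline{d}}'$; thus $M_{\underline{y}_n,\underline{z}_n}'\subseteq M_{\underline{y}_{n+1},\underline{z}_{n+1}}'\subseteq M_{\underline{c},\underline{d}}'$. Monotonicity of entropy under inclusion then shows $h_{top}(M_{\underline{y}_n,\underline{z}_n}')$ is nondecreasing in $n$ and $L:=\lim_n h_{top}(M_{\underline{y}_n,\underline{z}_n}')\le h_{top}(M_{\underline{c},\underline{d}}')$. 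The nontrivial half of the Lemma is the reverse inequality $L\ge h_{top}(M_{\underline{c},\underline{d}}')$.

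For this I would argue diagonally. Given $\ell\in\mathbb{N}$, choose $n(\ell)$ with $j_{n(\ell)}>\ell$; since $(j_n)$ is strictly increasing, $n(\ell)\to\infty$ as $\ell\to\infty$. By \eqref{eq. allowed blocks of same length from inside for large n} the languages agree up to length $\ell$, so $|B_\ell(M_{\underline{c},\underline{d}}')|=|B_\ell(M_{\underline{y}_{n(\ell)},\underline{z}_{n(\ell)}}')|$, and therefore
\[
\frac{1}{\ell}\log\big|B_\ell(M_{\underline{c},\underline{d}}')\big|
=\frac{1}{\ell}\log\big|B_\ell(M_{\underline{y}_{n(\ell)},\underline{z}_{n(\ell)}}')\big|
\ \ge\ h_{top}\big(M_{\underline{y}_{n(\ell)},\underline{z}_{n(\ell)}}'\big).
\]
Let $\ell\to\infty$: the left-hand side tends to $h_{top}(M_{\underline{c},\underline{d}}')$ by the first paragraph, while the right-hand side tends to $L$ because $h_{top}(M_{\underline{y}_n,\underline{z}_n}')\nearrow L$ and $n(\ell)\to\infty$. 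Hence $h_{top}(M_{\underline{c},\underline{d}}')\le L$, and combined with the previous paragraph this yields the asserted equality.

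The only delicate point is the interchange of the two limiting processes — the block length $\ell$ and the approximation index $n$ — which the diagonal choice $n=n(\ell)$ handles, and which is the same interchange-of-iterated-limits phenomenon captured abstractly by Lemma \ref{Lemma application of Fatou on N} (applied to an appropriate auxiliary doubly-indexed array built from the normalized logarithms of the block counts of $M_{\underline{y}_n,\underline{z}_n}'$); for this it is essential that for a subshift the block-growth limit coincides with the infimum, so that a single block count at length $\ell$ bounds the entropy only from above, the lower bound being supplied through \eqref{eq. allowed blocks of same length from inside for large n}. What really makes the argument work — and what I expect is the genuine heart rather than this soft manipulation — is the combinatorial identity \eqref{eq. allowed blocks of same length from inside for large n} established just before the statement: it says that although each $M_{\underline{y}_n,\underline{z}_n}'$ is a \emph{proper} subshift of $M_{\underline{c},\underline{d}}'$, their languages coincide for all words of length below $j_n$, and since topological entropy only registers the exponential growth rate of the language, agreement up to the diverging lengths $j_n$ is exactly enough to force the entropies to match.
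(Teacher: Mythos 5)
Your proof is correct, and it follows a genuinely different route for the key inequality $h_{top}(M_{\underline{c},\underline{d}}')\le\lim_n h_{top}(M_{\underline{y}_n,\underline{z}_n}')$ than the paper does. Both arguments hinge on the same combinatorial identity \eqref{eq. allowed blocks of same length from inside for large n} and on the nesting $M_{\underline{y}_n,\underline{z}_n}'\subseteq M_{\underline{y}_{n+1},\underline{z}_{n+1}}'\subseteq M_{\underline{c},\underline{d}}'$, so the easy inequality and the set-up are identical. But for the interchange of limits the paper applies the Fatou-type Lemma~\ref{Lemma application of Fatou on N} to the doubly-indexed array $a_{n,\ell}=\tfrac1\ell\log|B_\ell(M_{\underline{y}_n,\underline{z}_n}')|$, whereas you instead invoke Fekete's subadditivity lemma to get the pointwise bound $\tfrac1\ell\log|B_\ell(X)|\ge h_{top}(X)$ for a subshift $X$, and then run a diagonal argument choosing $n(\ell)$ with $j_{n(\ell)}>\ell$. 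These are genuinely different analytic devices: Fekete replaces one iterated limit by an infimum and makes the interchange unnecessary.

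Your route buys something concrete. Lemma~\ref{Lemma application of Fatou on N}, in order for the differences $b_{n,\ell}=a_{n,\ell}-a_{n,\ell-1}$ to be nonnegative so that Fatou's lemma applies, needs $a_{n,\ell}$ to be nondecreasing in $\ell$; but for a subshift the normalized block counts $\tfrac1\ell\log|B_\ell|$ are subadditive and converge to their infimum, so they are not monotone increasing in $\ell$. Your argument sidesteps this mismatch entirely: the only facts used are (i) block counts of a subshift are submultiplicative, whence $\tfrac1\ell\log|B_\ell|\ge h_{top}$ for every $\ell$, (ii) the entropies $h_{top}(M_{\underline{y}_n,\underline{z}_n}')$ increase to some $L\le h_{top}(M_{\underline{c},\underline{d}}')$, and (iii) the length-$\ell$ languages agree once $j_n>\ell$. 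The diagonal sequence then delivers $h_{top}(M_{\underline{c},\underline{d}}')\le L$ directly, with no interchange of iterated limits needed. This is cleaner and, in my view, more robust than the paper's appeal to Lemma~\ref{Lemma application of Fatou on N}.

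One small remark: you should say a word about the degenerate case $M_{\underline{c},\underline{d}}'=\emptyset$ (or $M_{\underline{y}_n,\underline{z}_n}'=\emptyset$ for some $n$), where $\log|B_\ell|$ is $-\infty$ and Fekete's lemma in the form you quote does not apply; but in that case both sides of the asserted equality vanish trivially, so this is only a matter of completeness, not a genuine gap.
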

\begin{proof}
We set $a_{n, \ell}:= \frac{1}{\ell} \log(|B_{\ell}(M_{\underline{y}_n , \underline{z}_n}')|)$. Then by Lemma \ref{Lemma application of Fatou on N} and relation (\ref{eq. allowed blocks of same length from inside for large n}) we have that,
\begin{align*}
h_{top}(M_{\underline{c}, \underline{d}}') \quad 
&\geq \ 
\lim_{n \to \infty} h_{top}(M_{\underline{y}_n, \underline{z}_n}') \ 
\\&= \ 
\lim_{n \to \infty} \lim_{\ell \to \infty} \frac{1}{\ell} \log \big( |B_{\ell}(M_{\underline{y}_{n}, \underline{z}_{n}}')| \big)  
\\&\geq  \ 
\lim_{\ell \to \infty} \lim_{n \to \infty} \frac{1}{\ell} \log \big( |B_{\ell}(M_{\underline{y}_{n}, \underline{z}_{n}}')| \big)
\\&= \ 
\lim_{\ell \to \infty} \lim_{n \to \infty} \frac{1}{\ell} \log \big( |B_{\ell}(M_{\underline{c}, \underline{d}}')| \big)
\\&= \ 
\lim_{\ell \to \infty} \frac{1}{\ell} \log \big( |B_{\ell}(M_{\underline{c}, \underline{d}}')| \big)
 \ = \quad 
h_{top}(M_{\underline{c}, \underline{d}}')
\end{align*}
\end{proof}

With the above Lemma in hand, we are now ready to prove that the dimension of $M_{\underline{c}, \underline{d}}$ depends continuously on the pair $(\underline{c}, \underline{d})$.

\begin{theorem} \label{Th. continuity of Phi}
Let the map defined by $\Phi: (\underline{c}, \underline{d}) \mapsto \dim_{H}(M_{\underline{c}, \underline{d}}')$.  The map $\Phi$ is continuous.
\end{theorem}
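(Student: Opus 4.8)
The plan is to reduce the statement to the continuity of the entropy map $(\underline{c},\underline{d})\mapsto h_{top}(M_{\underline{c},\underline{d}}')$, which by the equivalence $\dim_H = \dim_{box}\thickapprox h_{top}$ (Theorem \ref{Th. upper box dim = Hausdorff dim = entropy} and the convention adopted after it) immediately yields continuity of $\Phi$, since in the symbolic normalization the dimension is a fixed monotone --- in fact linear --- function of the entropy. As in Section \ref{Section Continuity} we may discard the trivial cases where $\underline{c}$ or $\underline{d}$ is eventually $0^{\infty}$ or $m^{\infty}$; otherwise we assume there are infinitely many indices $j$ with $c_j<m$, so that the sequences below are well defined. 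First I would set up a two--sided ``sandwich'' of $M_{\underline{c},\underline{d}}'$ by sets whose endpoints are eventually zero or eventually $m$ and hence accessible to the lemmas already proved.

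Fix $(\underline{c},\underline{d})$. For $N$ large (with $c_N<m$) consider the \emph{inner} sequences $\underline{y}_N:=c_1\ldots c_{N-1}(c_N+1)0^{\infty}$ and $\underline{z}_N:=d_1\ldots d_N0^{\infty}$ --- these are exactly the sequences of Lemma \ref{Lemma most important part for continuity} --- and the \emph{outer} sequences $\underline{u}_N:=c_1\ldots c_N0^{\infty}$ and $\underline{v}_N:=d_1\ldots d_Nm^{\infty}$. A short lexicographic computation shows that if $(\underline{c}',\underline{d}')$ agrees with $(\underline{c},\underline{d})$ on the first $N$ coordinates (a basic open neighborhood), then the first discrepancy forces
$$\underline{u}_N\ \leq\ \underline{c}'\ <\ \underline{y}_N\ \leq\ \underline{z}_N\ \leq\ \underline{d}'\ \leq\ \underline{v}_N ,$$
so that $[\underline{y}_N,\underline{z}_N]\subseteq[\underline{c}',\underline{d}']\subseteq[\underline{u}_N,\underline{v}_N]$, hence $M_{\underline{y}_N,\underline{z}_N}'\subseteq M_{\underline{c}',\underline{d}'}'\subseteq M_{\underline{u}_N,\underline{v}_N}'$ and therefore $h_{top}(M_{\underline{y}_N,\underline{z}_N}')\leq h_{top}(M_{\underline{c}',\underline{d}'}')\leq h_{top}(M_{\underline{u}_N,\underline{v}_N}')$ by monotonicity of entropy under inclusion of subshifts.

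Next I would let $N\to\infty$ and show both bounds converge to $h_{top}(M_{\underline{c},\underline{d}}')$. The lower bound is precisely Lemma \ref{Lemma most important part for continuity}: $h_{top}(M_{\underline{y}_N,\underline{z}_N}')\to h_{top}(M_{\underline{c},\underline{d}}')$. For the upper bound, observe that $\underline{u}_N\nearrow\underline{c}$ and $\underline{v}_N\searrow\underline{d}$ monotonically, with $\underline{u}_N\leq\underline{c}$ and $\underline{v}_N\geq\underline{d}$; since the lexicographic order is closed in the product topology, $\sup_N\underline{u}_N=\underline{c}$ and $\inf_N\underline{v}_N=\underline{d}$, whence $\bigcap_N[\underline{u}_N,\underline{v}_N]=[\underline{c},\underline{d}]$ and consequently $\bigcap_N M_{\underline{u}_N,\underline{v}_N}'=M_{\underline{c},\underline{d}}'$. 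As $\{M_{\underline{u}_N,\underline{v}_N}'\}_N$ is a decreasing sequence of subshifts, Lemma \ref{Lemma semicontinuity of entropy} gives $h_{top}(M_{\underline{u}_N,\underline{v}_N}')\to h_{top}(M_{\underline{c},\underline{d}}')$. Thus, given $\varepsilon>0$, fixing $N$ so that both bounding entropies lie within $\varepsilon$ of $h_{top}(M_{\underline{c},\underline{d}}')$ shows that the whole basic neighborhood $\{(\underline{c}',\underline{d}') : \text{first } N \text{ coordinates agree with }(\underline{c},\underline{d})\}$ is mapped into an $\varepsilon$-neighborhood of $h_{top}(M_{\underline{c},\underline{d}}')$; continuity of $(\underline{c},\underline{d})\mapsto h_{top}(M_{\underline{c},\underline{d}}')$, and hence of $\Phi$, follows.

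The main obstacle is the \emph{outer} approximation step. Hausdorff dimension is not in general continuous under decreasing intersections of compact sets, so it is essential to route this half of the argument through entropy and the semicontinuity Lemma \ref{Lemma semicontinuity of entropy}, which in turn relies on the (elementary but genuinely needed) compatibility of the lexicographic order with the topology that identifies $\bigcap_N M_{\underline{u}_N,\underline{v}_N}'$ with $M_{\underline{c},\underline{d}}'$. One must also handle with mild care the degenerate endpoints where $\underline{c}$ or $\underline{d}$ terminates in $0^{\infty}$ or $m^{\infty}$ --- there the relevant bounding sequences stabilize, making the corresponding one-sided convergence immediate --- and note that the passage from continuity of $h_{top}$ to continuity of $\dim_H$ for a general $f\in\mathcal{E}^{\alpha}$, rather than only for $D$, is exactly the extension of Theorem \ref{Th. upper box dim = Hausdorff dim = entropy} recorded in the Note following it.
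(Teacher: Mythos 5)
Your proposal is correct and follows essentially the same approach as the paper: the paper also sandwiches $\Phi(\underline{c},\underline{d})$ between a limit over inner sequences (handled via Lemma~\ref{Lemma most important part for continuity}) and a limit over outer sequences (handled via the nested intersection and Lemma~\ref{Lemma semicontinuity of entropy}), then appeals to monotonicity of $\Phi$ with respect to interval inclusion to conclude continuity. The only difference is that you spell out explicitly the lexicographic verification that a basic product-topology neighborhood of $(\underline{c},\underline{d})$ is trapped by the inner/outer pair for a fixed $N$, whereas the paper compresses this into the opening assertion that exhibiting the two matching limits suffices.
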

\begin{proof}
It is sufficient to show that there exist sequences $\underline{w}_n \nearrow \underline{c}$, $\underline{x}_n \searrow \underline{d}$ and $\underline{y}_n \searrow \underline{c}$, $\underline{z}_n \nearrow \underline{d}$, such that,
$$\lim_{n \to \infty} \Phi(\underline{w}_n, \underline{x}_n) \ = \ \Phi(\underline{c}, \underline{d}) \ = \ \lim_{n \to \infty} \Phi(\underline{y}_n, \underline{z}_n)$$
In that case, both of the limits exists from monotonicity. 
Using Theorem \ref{Th. upper box dim = Hausdorff dim = entropy} and Lemma \ref{Lemma semicontinuity of entropy}, we have that the first equality indeed holds for any pair of sequences so that $\underline{w}_n \nearrow \underline{c}$ and $\underline{x}_n \searrow \underline{d}$, since then $M_{\underline{c}, \underline{d}}' = \bigcap_{n} M_{\underline{w}_n, \underline{x}_n}'$ and $M_{\underline{w}_{n+1}, \underline{x}_{n+1}}' \subset M_{\underline{w}_n, \underline{x}_n}'$. 

The second equality is given from Theorem \ref{Th. upper box dim = Hausdorff dim = entropy} and 
Lemma \ref{Lemma most important part for continuity} and that completes the proof.
\end{proof}

\begin{remark*}
The first inequality in the proof of Theorem \ref{Th. continuity of Phi} can also be derived in a similar manner as the second equality.
\end{remark*}

\subsection{Corollaries of Theorem \ref{Th. continuity of Phi}}

\begin{theorem}
The functions $\phi: \underline{c} \mapsto \dim_H(M_{\underline{c}}')$ and $\widetilde{\phi}: \underline{c} \mapsto \dim_H(\widetilde{M_{\underline{\widetilde{c}}}}')$ are continuous.
\end{theorem}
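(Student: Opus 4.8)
The plan is to deduce both continuity statements directly from Theorem \ref{Th. continuity of Phi} by realizing $\phi$ and $\widetilde{\phi}$ as compositions of the two-variable map $\Phi:(\underline{c},\underline{d}) \mapsto \dim_H(M_{\underline{c},\underline{d}}')$ with manifestly continuous maps into its domain. No new hard analysis should be needed; the whole point is bookkeeping about which two-sided constraint degenerates to which one-sided constraint.

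First I would note that $M_{\underline{c}}' = M_{\underline{0},\underline{c}}'$, where $\underline{0} := 0^{\infty}$ is the lexicographic minimum of $\Sigma_{m+1}$. Indeed, the inequality $\underline{0} \leq \sigma^{n}(\underline{x})$ holds automatically for every $\underline{x} \in \Sigma_{m+1}$ and every $n \geq 0$, so the defining condition $\underline{0} \leq \sigma^{n}(\underline{x}) \leq \underline{c}$ of $M_{\underline{0},\underline{c}}'$ is equivalent to the condition $\sigma^{n}(\underline{x}) \leq \underline{c}$ defining $M_{\underline{c}}'$. Hence $\phi(\underline{c}) = \Phi(\underline{0},\underline{c})$, and since $\underline{c} \mapsto (\underline{0},\underline{c})$ is a continuous map of $\Sigma_{m+1}$ into the domain of $\Phi$ (it is just the inclusion of a single fiber), continuity of $\phi$ follows from Theorem \ref{Th. continuity of Phi}. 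For $\widetilde{\phi}$ I would invoke the computation carried out in Example \ref{example of inversion}, which gives $\widetilde{M_{\underline{\widetilde{c}}}'} = \{\underline{x} \in \Sigma_{m+1} : \sigma^{n}(\underline{x}) \geq \underline{c}, \ \forall n \geq 0\}$. Since $\sigma^{n}(\underline{x}) \leq m^{\infty}$ always holds, this set equals $M_{\underline{c},\,m^{\infty}}'$, so $\widetilde{\phi}(\underline{c}) = \Phi(\underline{c},m^{\infty})$; as $\underline{c} \mapsto (\underline{c},m^{\infty})$ is again continuous, Theorem \ref{Th. continuity of Phi} yields continuity of $\widetilde{\phi}$.

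The only point requiring a line of verification is that the degenerate pairs $(\underline{0},\underline{c})$ and $(\underline{c},m^{\infty})$ genuinely lie in the domain where Theorem \ref{Th. continuity of Phi} applies: one should check that the argument of that theorem, which rests on Theorem \ref{Th. upper box dim = Hausdorff dim = entropy}, the semicontinuity Lemma \ref{Lemma semicontinuity of entropy}, and Lemma \ref{Lemma most important part for continuity}, never used the strict inequality $\underline{c} < \underline{d}$ nor non-triviality of the resulting set. It does not — the approximating sequences $\underline{w}_n \nearrow \underline{c}$, $\underline{x}_n \searrow \underline{d}$ and $\underline{y}_n \searrow \underline{c}$, $\underline{z}_n \nearrow \underline{d}$ and the Fatou-type estimate of Lemma \ref{Lemma most important part for continuity} remain valid verbatim. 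Once this is observed, there is no remaining obstacle and the theorem is an immediate corollary of Theorem \ref{Th. continuity of Phi}. (If one prefers to avoid even this remark, the case $\widetilde{\phi}$ can alternatively be handled by noting that the reflection $\underline{c} \mapsto \underline{\widetilde{c}}$ is an isometry of $\Sigma_{m+1}$ and that $A \mapsto \widetilde{A}$ preserves Hausdorff dimension, reducing $\widetilde{\phi}$ to $\underline{c} \mapsto \dim_H(\{\underline{x} : \sigma^n(\underline{x}) \geq \underline{c}\})$, and then repeating the one-sided reduction above.)
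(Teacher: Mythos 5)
Your proof is correct and is essentially the paper's intended argument: the paper places this theorem in the subsection ``Corollaries of Theorem \ref{Th. continuity of Phi}'' with no written proof, and the reduction is exactly the one you give, $\phi(\underline{c}) = \Phi(\underline{0}, \underline{c})$ via $M_{\underline{c}}' = M_{\underline{0},\underline{c}}'$ and $\widetilde{\phi}(\underline{c}) = \Phi(\underline{c}, m^{\infty})$ via Example \ref{example of inversion}. Your extra remark checking that the degenerate endpoints $\underline{0}$ and $m^{\infty}$ cause no trouble in the proof of Theorem \ref{Th. continuity of Phi} (one simply drops the one-sided approximating sequences that would have to approach from outside $\Sigma_{m+1}$) is a genuine, if minor, point of care the paper leaves tacit.
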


Let $N_{\underline{c}}:= \{ \underline{x}\in \Sigma_{m+1}: \ \underline{c} \leq \sigma^n(\underline{x}) < \underline{x} \}$. By Proposition \ref{Pr. dim M_c,d = dim M_c,d'} we also have the following.
 
\begin{theorem} \label{Th. continuity of Psi}
The map defined by $\Psi: (\underline{c}, \underline{d}) \mapsto \dim_{H}(M_{\underline{c}, \underline{d}})$ is continuous. In particular, the functions $\psi: \underline{c} \mapsto \dim_H(M_{\underline{c}})$ and $\widetilde{\psi}: \underline{c} \mapsto \dim_H(N_{\underline{c}})$ are continuous.
\end{theorem}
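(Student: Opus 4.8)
The plan is to derive both statements directly from Theorem \ref{Th. continuity of Phi} and Proposition \ref{Pr. dim M_c,d = dim M_c,d'}; no new analysis is required, since all the hard estimates have already been absorbed into the continuity of $\Phi$.

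For the continuity of $\Psi$ I would simply invoke Proposition \ref{Pr. dim M_c,d = dim M_c,d'}: it gives $\dim_H(M_{\underline{c},\underline{d}}) = \dim_H(M_{\underline{c},\underline{d}}')$ for every pair $\underline{c} < \underline{d}$, and on the diagonal both quantities vanish (there $M_{\underline{c},\underline{c}} = \varnothing$ while $M_{\underline{c},\underline{c}}'$ is at most a single point). Hence $\Psi$ and $\Phi$ are one and the same function on their common domain, and so $\Psi$ inherits continuity from Theorem \ref{Th. continuity of Phi}; there is nothing further to prove.

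For the two ``in particular'' claims, the idea is to exhibit $\psi$ and $\widetilde{\psi}$ as restrictions of $\Psi$ along continuous one-parameter curves in $\Sigma_{m+1}\times\Sigma_{m+1}$. Since $0^{\infty}$ is the minimum and $m^{\infty}$ the maximum of $\Sigma_{m+1}$ in the lexicographic order, the lower constraint $0^{\infty}\le\sigma^n(\underline{x})$ and the upper constraint $\underline{x}\le m^{\infty}$ are both vacuous; comparing the definitions, this yields
$$M_{\underline{c}} = M_{0^{\infty},\,\underline{c}} \qquad\text{and}\qquad N_{\underline{c}} = M_{\underline{c},\,m^{\infty}}$$
for every $\underline{c}\in\Sigma_{m+1}$. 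Therefore $\psi(\underline{c}) = \Psi(0^{\infty},\underline{c})$ and $\widetilde{\psi}(\underline{c}) = \Psi(\underline{c},m^{\infty})$, and since the maps $\underline{c}\mapsto(0^{\infty},\underline{c})$ and $\underline{c}\mapsto(\underline{c},m^{\infty})$ are continuous from $\Sigma_{m+1}$ into $\Sigma_{m+1}\times\Sigma_{m+1}$, the functions $\psi$ and $\widetilde{\psi}$ are continuous as compositions with $\Psi$.

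I do not anticipate any genuine obstacle: the only items to verify are the two set identities displayed above together with the (trivial) values at the endpoints $\underline{c}\in\{0^{\infty},m^{\infty}\}$. All the substance of the result is already contained in Theorem \ref{Th. continuity of Phi}, whose proof in turn rests on Lemma \ref{Lemma most important part for continuity} and the Fatou-type inequality of Lemma \ref{Lemma application of Fatou on N}.
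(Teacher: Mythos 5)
Your proposal is correct and follows the same route as the paper, which proves the theorem simply by invoking Proposition~\ref{Pr. dim M_c,d = dim M_c,d'} to identify $\Psi$ with $\Phi$ and then appealing to Theorem~\ref{Th. continuity of Phi}. The additional details you supply — the diagonal check and the verification of the identities $M_{\underline{c}} = M_{0^\infty,\underline{c}}$ and $N_{\underline{c}} = M_{\underline{c},m^\infty}$ — are correct and make explicit the steps the paper leaves implicit.
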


\smallskip

\section{Further Discussion}

Firstly, in view of Lemma \ref{Lemma "minimal" kneading}, we will prove the following Lemma.

\begin{lemma} \label{Lemma region of a kneading sequence as a left endpoint} 
Let $\underline{c}=(c_1, c_2, c_3, \ldots ) \in \Sigma_{m+1}$, be a kneading sequence. Then there exists an $\epsilon = \epsilon(\underline{c}) > 0$ so that if $\underline{c}'$ is a sequence 
so that 
if $d_{\Sigma_{m+1}}(\underline{c}, \underline{c'})<\epsilon$, then 
$M_{\underline{c}, \underline{d}} = M_{\underline{c}', \underline{d}}$ and  $M_{\underline{c}, \underline{d}}' = M_{\underline{c}', \underline{d}}'$. In particular, $M_{\underline{c}, \underline{d}}' = \{ \underline{x}\in \Sigma_{m+1} : \ \underline{c} <  \sigma^n(\underline{x}) \leq \underline{d} , \ \forall n\geq 0 \}$ and $M_{\underline{c}, \underline{d}} = \{ \underline{x}\in \Sigma_{m+1} : \ \underline{c} < \sigma^n(\underline{x})< x \leq \underline{d} , \ \forall n\geq 1\}$
\end{lemma}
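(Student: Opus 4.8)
The plan is to reduce the statement to a short combinatorial fact about the lexicographic order, the only quantitative input being how far the first forward shift $\sigma(\underline{c})$ can agree with $\underline{c}$. First I would record the strict reformulations asserted at the end of the lemma: if $\underline{x}\in M_{\underline{c},\underline{d}}'$ and $\sigma^{n_0}(\underline{x})=\underline{c}$ for some $n_0\ge 0$, then $\sigma^{n_0+1}(\underline{x})=\sigma(\underline{c})<\underline{c}$ because $\underline{c}$ is kneading, contradicting $\underline{c}\le\sigma^{n_0+1}(\underline{x})$; hence $M_{\underline{c},\underline{d}}'=\{\underline{x}:\underline{c}<\sigma^n(\underline{x})\le\underline{d},\ \forall n\ge 0\}$, and the same argument gives the stated form of $M_{\underline{c},\underline{d}}$. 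Since $M_{\underline{c},\underline{d}}=M_{\underline{c},\underline{d}}'\cap M$ with $M$ the set of kneading sequences of Definition~\ref{definitions}, and the right-hand factor does not depend on $\underline{c}$, it is enough to prove $M_{\underline{c},\underline{d}}'=M_{\underline{c}',\underline{d}}'$.

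Next I would fix the constant. Let $p=p(\underline{c})$ be the first coordinate at which $\sigma(\underline{c})$ differs from $\underline{c}$; since $\underline{c}$ is kneading we have $\sigma(\underline{c})<\underline{c}$, so $p<\infty$, $(\sigma\underline{c})_i=c_i$ for $i<p$, and $(\sigma\underline{c})_p<c_p$. As $d_{\Sigma_{m+1}}$ induces the product topology, I choose $\epsilon=\epsilon(\underline{c})>0$ small enough that $d_{\Sigma_{m+1}}(\underline{c},\underline{c}')<\epsilon$ forces $c_i'=c_i$ for $i=1,\dots,p+1$ (and, shrinking $\epsilon$ further, also $\underline{c}'<\underline{d}$, which is harmless as $\underline{c}<\underline{d}$). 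The crucial elementary fact is then: any $\underline{y}$ that agrees with $\underline{c}$ on the first $p+1$ coordinates satisfies $\sigma(\underline{y})<\underline{c}$, since $\sigma(\underline{y})$ agrees with $\sigma(\underline{c})$ on the first $p$ coordinates, hence with $\underline{c}$ on the first $p-1$ while $(\sigma\underline{y})_p=(\sigma\underline{c})_p<c_p$; and because $\underline{c}$ and $\underline{c}'$ agree through coordinate $p$, the same computation gives $\sigma(\underline{y})<\underline{c}'$.

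With this in hand the two inclusions are essentially forced, and I would split on the sign of $\underline{c}'-\underline{c}$ (the case $\underline{c}'=\underline{c}$ being vacuous). If $\underline{c}'>\underline{c}$, then $M_{\underline{c}',\underline{d}}'\subseteq M_{\underline{c},\underline{d}}'$ is trivial; conversely, given $\underline{x}\in M_{\underline{c},\underline{d}}'$ with some $\underline{y}:=\sigma^{n_0}(\underline{x})<\underline{c}'$, I note $\underline{c}<\underline{y}<\underline{c}'$, so the first coordinate where $\underline{y}$ exceeds $\underline{c}$ must lie beyond $p+1$ (else, $\underline{c}$ and $\underline{c}'$ agreeing up to $p+1$, that coordinate would already force $\underline{y}>\underline{c}'$); hence $\underline{y}$ agrees with $\underline{c}$ on the first $p+1$ coordinates and, by the crucial fact, $\sigma^{n_0+1}(\underline{x})=\sigma(\underline{y})<\underline{c}$, contradicting $\sigma^{n_0+1}(\underline{x})>\underline{c}$. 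The case $\underline{c}'<\underline{c}$ is symmetric: $M_{\underline{c},\underline{d}}'\subseteq M_{\underline{c}',\underline{d}}'$ is trivial, and an orbit point of some $\underline{x}\in M_{\underline{c}',\underline{d}}'$ lying $\le\underline{c}$ is either equal to $\underline{c}$ (so its successor is $\sigma(\underline{c})<\underline{c}'$) or strictly below $\underline{c}$ with first difference beyond $p+1$ (so its successor is again $<\underline{c}'$ by the crucial fact), both contradicting membership in $M_{\underline{c}',\underline{d}}'$. Therefore $M_{\underline{c},\underline{d}}'=M_{\underline{c}',\underline{d}}'$, and intersecting with $M$ yields $M_{\underline{c},\underline{d}}=M_{\underline{c}',\underline{d}}$.

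I do not expect a genuine obstacle: no $\beta$-shift machinery is needed and the argument stays entirely within the lexicographic order. The point to get right is the bookkeeping of first-difference coordinates and the observation that a single application of $\sigma$ already yields the contradiction --- which is exactly where finiteness of $p$, i.e. the kneading hypothesis on $\underline{c}$, is used --- together with disposing cleanly of the borderline possibilities $\underline{c}'=\underline{c}$ and $\sigma^{n_0}(\underline{x})=\underline{c}$.
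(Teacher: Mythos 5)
Your proof is correct and follows the same mechanism as the paper: fix $\epsilon$ so that $\underline{c}'$ agrees with $\underline{c}$ over a sufficiently long prefix, observe that any orbit point wedged between $\underline{c}$ and $\underline{c}'$ then has a further forward iterate dropping strictly below both, and derive a contradiction with membership in $M_{\underline{c},\underline{d}}'$ or $M_{\underline{c}',\underline{d}}'$. The only difference is quantitative: the paper takes $i_0$, the first position where the global minimum $c_{\min}=\min_i c_i$ occurs, and shifts by $i_0-1$, whereas you take $p+1$, the first position where an entry falls below $c_1$, and shift once---since $p+1\le i_0$ your constant is at least as sharp, but the underlying argument is identical.
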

\begin{proof}
Let $\underline{c}$ be a kneading sequence. Then we consider $c_{min}:=\min \{c_i: \ i\in \mathbb{N} \}$ and $i_0$ the first time $c_{min}$ appears in the sequence $\underline{c}$. Since $\underline{c}$ is kneading, observe that $c_{min}< c_1$ since $\underline{c}\neq n^{\infty}$, $n\in \{0,1,\ldots , m\}$ and of course $i_0>1$. If $\underline{c}'$ is such that $\underline{c}'=c_1' c_2' \ldots$, so that $c_i'=c_i$, for all $1\leq i \leq i_0$ and $\underline{c}'< \underline{c}$, then for every $\underline{c}''$ in between we have that $c_{i_0}'' = c_{i_0}<c_1=c_1''$ and thus $\sigma^{i_0-1}(\underline{c}'')< \underline{c}'$. In particular, if we assume that $\underline{x}\in M_{\underline{c}', \underline{d}} \setminus M_{\underline{c}, \underline{d}}$, then there exists an $\ell\geq 0$ so that $\underline{c}' \leq \sigma^{\ell}(\underline{x}) < \underline{c}$. But then $\sigma^{\ell+i_0-1}(\underline{x})< \underline{c}'$, which is a contradiction, since we assumed that $\underline{x} \in M_{\underline{c}', \underline{d}}$. Thus $M_{\underline{c}', \underline{d}} \setminus M_{\underline{c}, \underline{d}} = \emptyset$.

By symmetry if $\underline{c}'$ is such that $\underline{c}'=c_1' c_2' \ldots$, so that $c_i'=c_i$, for all $1\leq i \leq i_0$ and $\underline{c}'> \underline{c}$, then for every $\underline{c}''$ in between we have that $c_{i_0}'' = c_{i_0}<c_1=c_1''$ and thus $\sigma^{i_0-1}(\underline{c}'')< \underline{c}$ and therefore,  $M_{\underline{c}, \underline{d}} \setminus M_{\underline{c}', \underline{d}} = \emptyset$.
\end{proof}

Lemma \ref{Lemma "minimal" kneading} and Lemma \ref{Lemma region of a kneading sequence as a left endpoint} give us an idea of what the critical points for the general case should be, i.e. describe the smallest interval so that we hit for the first time any value for the dimension for the first time  or the largest interval so that the dimension remains unchanged. For example, if $\dim_H(M_{\underline{c}, \underline{d}}) = a \in [0,1]$ and if $\underline{c}$ is a kneading sequence, the dimension remains the same if we perturb a little bit $\underline{c}$.  On the other end, if $\underline{d}$ is not a kneading sequence, then the dimension is not changed if we enlarge the interval $[\underline{c}, \underline{d}]$ by a little bit from the right end. This is one reason that we are interested in understanding the behaviour of the dimension in the particular case when $\underline{c}$ is not a kneading sequence and $\underline{d}$ is a kneading one.

Another reason is that from Theorem \ref{Th. continuity of Phi} and Theorem \ref{Th. continuity of Psi}, we can, in principle, approximate the dimension for of $M_{\underline{c}, \underline{d}}$  for any given interval $[\underline{c}, \underline{d}]$, by calculating the dimension of $M_{\underline{c}', \underline{d}'}$ where $\underline{c}'$ is a not kneading sequence close to $\underline{c}$ and $\underline{d}'$ is a kneading sequence close to $\underline{d}$.  As a matter of fact we can even assume that $\underline{c}'$ and  $\underline{d}'$ are even finite sequences. Indeed, for $\underline{d}'$ we have that either it is finite, i.e. ends with infinite $0$'s or it can be approached from below or above by an increasing sequence of finite kneading sequences. On the other hand, either $\underline{c}'$ is finite or it can be approached, either from below or from above, by a sequence of finite non-kneading sequences. This reduces the problem of estimating the Hausdorff dimension to calculate the dimension of a SFT. That can be easily derived by a similar approach as in the discussion at the beginning of Section \ref{Section Continuity}. Therefore it is sufficient to calculate the dimension of a subshift described above, i.e. find the largest eigenvalue of the adjacency matrix. In principle, this estimation can be arbitrarily sharp.

\bigskip

\begin{ackn}
The author would like to express his gratitude to J\"org Schmeling, under whose supervision and guidance this project was conducted and to Tomas Persson for his helpful comments and recommendations.
\end{ackn}

\smallskip

\bigskip

\footnotesize \textsc{Centre for Mathematical Sciences, Lund University, 221 00 Lund, Sweden.}

\textit{E-mail address}: \href{mailto:georgios.lamprinakis@math.lth.se}{\nolinkurl{georgios.lamprinakis@math.lth.se}}

\end{document}